\documentclass[12pt]{amsart}
\usepackage{amssymb,amsmath,amsthm,amssymb}
\usepackage{mathtools}
\usepackage{graphicx}
\usepackage{comment}
\usepackage{yhmath}
\usepackage[hypertexnames=false]{hyperref}
\usepackage{enumerate}
\usepackage{color}
\usepackage{epstopdf}
\usepackage{graphicx,scalerel}
\graphicspath{{figures/}}
\usepackage{tikz}
\usepackage{mathrsfs}
\usepackage{float}
\usetikzlibrary{decorations.pathreplacing,calligraphy,arrows.meta,angles,quotes}

\renewcommand{\epsilon}{\varepsilon}

\DeclareMathOperator{\arccosh}{arccosh}

\def\R{\mathbb{R}}

\def\tilde{\widetilde}

\def\Si{\Sigma}

\def\la{\langle}
\def\ra{\rangle}

\def\S{\mathbb{S}}
\def\H{\mathbb{H}}

\def\ba #1\ea {\begin{align} #1\end{align}}
\def\bann #1\eann {\begin{align*} #1\end{align*}}
\def\ben #1\een {\begin{enumerate} #1\end{enumerate}}
\def\bi #1\ei {\begin{itemize}\renewcommand\labelitemi{--} #1\end{itemize}}

\newtheorem{theorem}{Theorem}[section]
\newtheorem*{theorem*}{Theorem}
\newtheorem{lemma}[theorem]{Lemma}
\newtheorem{proposition}[theorem]{Proposition}
\newtheorem{remark}[theorem]{Remark}
\newtheorem{example}[theorem]{Example}
\newtheorem{corollary}[theorem]{Corollary}
\newtheorem{definition}[theorem]{Definition}

\newtheorem*{conjecture*}{Conjecture}

\newtheorem*{claim*}{Claim}

\newtheoremstyle{TheoremNum}
        {\topsep}{\topsep}              
        {\itshape}                      
        {}                              
        {\bfseries}                     
        {.}                             
        { }                             
        {\thmname{#1}\thmnote{ \bfseries #3}}
    \theoremstyle{TheoremNum}

\author{Theodora Bourni}
\author{Jose M. Espinar}
\author{Aakash Mishra}
\address{Department of Mathematics, University of Tennessee Knoxville, Knoxville TN, 37996-1320}
\email{tbourni@utk.edu}
\address{Departamento de Geometr\'{i}a y Topolog\'{i}a $\&$ IMAG, University of Granada, Spain, 18071}
\email{jespinar@ugr.es}
\address{Department of Mathematics, University of Tennessee Knoxville, Knoxville TN, 37996-1320}
\email{amishra5@vols.utk.edu}

\date{\today}

\title[Aleksandrov reflection for geometric flows in $\mathbb{H}^{n+1}$]{Aleksandrov reflection for geometric flows in hyperbolic spaces}

\begin{document}

\begin{abstract}
We develop an Aleksandrov reflection framework for a large class of expanding curvature flows in hyperbolic space, with inverse mean curvature flow serving as a model case. The method applies to the level-set formulation of the flow, and as a consequence we obtain graphical and Lipschitz estimates. Using these estimates, we show that solutions become star-shaped and therefore converge exponentially fast to an umbilic hypersurface at infinity.
We also extend these results to the non-compact setting in two cases. First, assuming the asymptotic boundary of the solution consists of a single point, we show that the flow becomes a graph over a horosphere with uniform gradient bounds and converges to a limiting horosphere. Second, assuming the asymptotic boundary consists of two points, we prove that the flow eventually becomes a global graph over a hyperbolic cylinder with uniform gradient bounds; this is achieved through an explicit cylindrical barrier construction analogous to the horospherical one.
\end{abstract}

\maketitle

\setcounter{tocdepth}{1}

\section{Introduction}

Inverse mean curvature flow (IMCF) has played a central role in geometric analysis since the work of Huisken and Ilmanen, who used a weak formulation of the flow in Euclidean space to prove the Riemannian Penrose inequality \cite{HuiskenIlmanen2001}. Their construction, based on an elliptic regularisation of the level set formulation, produces a weak outward-minimising evolution which is well defined past singularities and enjoys powerful monotonicity properties. In parallel, Gerhardt and others developed a systematic theory of smooth inverse curvature flows for star-shaped and mean convex hypersurfaces, both in Euclidean and hyperbolic space \cite{Gerhardt2014}.

In hyperbolic space, inverse curvature flows and their variants have been used extensively to derive sharp geometric inequalities of Penrose and Aleksandrov--Fenchel   type. For instance, Andrews and Wei, and Andrews, Chen and Wei, used curvature flows preserving quermassintegrals or volume to prove Aleksandrov--Fenchel type inequalities for closed hypersurfaces in $\mathbb{H}^{n+1}$ \cite{AndrewsChenWei2021,AndrewsWei2018}, while Hu, Li and Wei obtained further hyperbolic inequalities via locally constrained curvature flows \cite{HuLiWei2022}. In dimension three, Hung and Wang revisited IMCF in $\mathbb{H}^3$ and clarified the relation between the flow and various notions of mass \cite{HungWang2015}. On the asymptotically hyperbolic side, Brendle, Hung and Wang, and de Lima and Gir{\~a}o, established sharp Minkowski and Penrose type inequalities using IMCF and related flows in the anti-de Sitter--Schwarzschild setting and for asymptotically hyperbolic graphs \cite{BrendleHungWang2016, DeLimaGirao2016}. Altogether, these works show that expanding curvature flows in negatively curved ambient spaces provide a flexible tool for encoding global geometric information.

A different but closely related perspective on expanding flows is provided by the Aleksandrov reflection method. In Euclidean space, Chow and Gulliver introduced a reflection scheme for extrinsic curvature flows that systematically exploits symmetry with respect to affine hyperplanes and yields strong control on the geometry of evolving hypersurfaces \cite{ChowGulliver01}. Their ideas have since been developed in many directions and are surveyed in detail by Chow \cite{Chow2023}. In particular, Aleksandrov reflection has proved especially effective for expanding and ancient solutions: it can be used to propagate symmetries along the flow and to obtain rigidity and classification results under relatively weak hypotheses on the initial data. This is exploited, for example, in the work of Bourni, Langford and Tinaglia~\cite{BourniLangfordTinaglia2020, BourniLangfordTinaglia2021} and in the classification of convex ancient solutions to mean curvature flow by Brendle and Choi~\cite{BrendleChoi2019, BrendleChoi2021}.

In hyperbolic space, the Aleksandrov reflection method for \emph{stationary} (CMC and Weingarten) hypersurfaces in hyperbolic space goes back to do Carmo--Lawson~\cite{doCarmo} and to Levitt--Rosenberg~\cite{LevittRosenberg1985}, with later refinements by Sa Earp--Toubiana~\cite{SaEarpToubiana2001}. The present paper develops the parabolic counterpart of this scheme for the level-set formulation of expanding curvature flows, both in the compact case and in the non-compact case with one or two points at infinity.

Building on the weak IMCF framework of Huisken and Ilmanen and on estimates for smooth star-shaped flows, Harvie studied weak inverse mean curvature flows in $\mathbb{H}^{n+1}$ (and, earlier, in Euclidean space \cite{harvie2022IMCFnonstarshaped} for non-star-shaped initial data). He showed that after a finite time these hyperbolic flows become star-shaped and, in low dimensions, smooth, so that they subsequently evolve by the classical IMCF. As an application, he obtained Minkowski-type inequalities for outer-minimizing domains in hyperbolic space \cite{Harvie2024}, previously known only for star-shaped hypersurfaces \cite{BrendleHungWang2016, DeLimaGirao2016}. His work can be viewed as a first hyperbolic counterpart to the Euclidean Aleksandrov reflection theory for extrinsic flows developed by Chow and Gulliver. A complementary general construction of weak IMCF in ambient spaces with Ricci $\ge -(n-1)$ was given by Mari, Rigoli and Setti~\cite{MariRigoliSetti2022} using a $p$-Laplace approximation; their framework covers $\mathbb H^{n+1}$ in every dimension.

The present paper continues this program by studying expanding curvature flows in $\mathbb{H}^{n+1}$ from the dual perspective of weak formulations and Aleksandrov reflection. Throughout the paper we study a class of expanding extrinsic curvature flows of properly embedded hypersurfaces in $\H^{n+1}$. 
At the parametric level, if $\Sigma$ is a complete hypersurface and 
$\phi_t : \Sigma \to \H^{n+1}$, $t\in[0,T)$, is a one-parameter family of proper embeddings with inward unit normal $\eta_t$ and
principal curvatures $\kappa_1\leq \dots \leq \kappa_n$, the flow is formally given by
\begin{equation*}
    \frac{\partial \phi_t}{\partial t} \;=\;  F(\kappa_1,\dots,\kappa_n;t)\,\eta_t \,.
\end{equation*}
The structural assumptions on the speed $F$ and the precise class of flows covered in this work are collected in
Section~\ref{sec:levelset}. As a basic example, the inverse mean curvature flow corresponds to the choice
$F(\kappa_1,\dots,\kappa_n;t) = -1/H_t$, where $H_t = \kappa_1+\cdots+\kappa_n$ denotes the mean curvature of $\Sigma_t = \phi_t(\Sigma)$.

We work primarily with weak, viscosity-type solutions in the spirit of Evans--Spruck and Chen--Giga--Goto \cite{CGG91,EvansSpruck91}, suitably adapted to the hyperbolic setting. We show that the interplay between the level set formulation and reflection across totally geodesic hyperplanes yields strong structural information on the evolving hypersurfaces. In particular, we prove that appropriate weak solutions become star-shaped after a finite waiting time, which we compute explicitly. If, in addition, the flow is smooth, then classical results describe its subsequent evolution. It is reasonable to conjecture that one does not need to assume a priori smoothness of the flow; rather, once the flow becomes star-shaped, it automatically becomes smooth. This phenomenon is indeed observed, for instance, for the inverse mean curvature flow in low dimensions \cite{LiWei2017, Harvie2024}.

Moreover, we establish uniform geometric control that allows us to analyse the long-time behaviour of the flow and to describe its asymptotic geometry. 
From a broader viewpoint, our results fit into a growing body of work that uses expanding curvature flows in hyperbolic space to derive global geometric information and sharp inequalities. While previous approaches have largely relied on star-shapedness or strong convexity assumptions preserved along smooth flows \cite{AndrewsChenWei2021,AndrewsWei2018,Gerhardt2014,HuLiWei2022}, our use of a weak viscosity formulation combined with Aleksandrov reflection allows us to treat more general initial data, including outer-minimising hypersurfaces. A novel aspect of our work is that we also allow non-compact initial data. Very little is known in the non-compact setting compared to the compact case, and Aleksandrov reflection is typically much more delicate in this context. Nevertheless, we develop an Aleksandrov reflection scheme that works for these non-compact configurations.  

In this way, the hyperbolic inverse mean curvature flow inherits some of the flexibility of the weak Euclidean theory of Huisken--Ilmanen, while retaining enough regularity, after a finite waiting time, to admit a detailed asymptotic analysis.

The main results of this paper are threefold. 
First, we adapt Aleksandrov reflection to the level set formulation of expanding curvature flows in hyperbolic space and derive graphical and Lipschitz estimates for viscosity solutions that remain outside a fixed geodesic ball (Theorems~\ref{thm: Lipschitz bound} and~\ref{thm: gradient bound}). 
Second, for inverse curvature flows with general admissible speed functions \(F\), we combine these estimates with Harvie's weak IMCF theory \cite{Harvie2024} and Gerhardt's smooth analysis \cite{Gerhardt2014} to perform an asymptotic analysis (Theorem~\ref{thm: Reg}). This shows that weak solutions become star-shaped, and if the solution is additionally smooth, it converges exponentially fast to an umbilic hypersurface at infinity.\\  
Third, in the non-compact setting of properly embedded hypersurfaces whose asymptotic boundary consists of exactly one or two points, we introduce a horospherical (resp.\ cylindrical) version of Aleksandrov reflection. This yields global graphical representations and gradient bounds over a horosphere (Theorem~\ref{thm:gradup}) and over a cylinder (Theorem~\ref{thm:grad2pts}), respectively. In the one-point case we further show that, for a large class of curvature functions, the property of lying between two horospheres is preserved along the flow; this is achieved through an explicit construction of barrier hypersurfaces (Theorem~\ref{thm:upbar}). For the inverse mean curvature flow we combine these estimates with Allen's asymptotic theory~\cite{Allen} to deduce long-time existence and convergence to a limiting horosphere (Theorem~\ref{thm:NonCompact}). In the two-point case we obtain a cylindrical analogue of the spherical-barrier argument: if $\Sigma_0$ lies outside a hyperbolic cylinder $\mathcal C(s_-)$, then $\Sigma_t$ stays outside an explicit expanding cylinder $\mathcal C(s_-(t))$ with $\sinh s_-(t)=e^{t/n}\sinh s_-$ (Theorem~\ref{thm:cylbar}); consequently, $\Sigma_t$ becomes a global graph over the cylinder $\mathcal C(\bar s)$ after a finite waiting time $T=n\log(\sinh\bar s/\sinh s_-)$ (Corollary~\ref{cor:graph-after-T} and Theorem~\ref{thm:NonCompact2pts}). Long-time existence of the two-point flow and the existence of an asymptotic profile remain open; the obstruction is discussed in Remark~\ref{rem:open-2pt}.

\section{Preliminaries}\label{sec: 2}

In this section, we recall the necessary background for the rest of the paper.

\subsection{Hyperbolic space}

We denote by \(\mathbb{H}^{n+1}\) the simply connected, complete Riemannian manifold of constant sectional curvature \(-1\), endowed with the hyperbolic metric \(g_{\mathbb{H}^{n+1}}\). The Euclidean metric on \(\mathbb{R}^{n+1}\) will be denoted by \(g_{\mathbb{R}^{n+1}}\); we use \(\langle \cdot,\cdot\rangle\) and \(|\cdot|\) for the Euclidean scalar product and norm, and \(\langle \cdot,\cdot\rangle_{\mathbb{H}^{n+1}}\), \(|\cdot|_{\mathbb{H}^{n+1}}\) for the corresponding hyperbolic quantities. We will omit the subindices if no confusion occurs.

Fix a point \(p_0\in\mathbb{H}^{n+1}\). In geodesic polar coordinates around \(p_0\), the exponential map
\[
\exp_{p_0} : (0,\infty)\times\mathbb{S}^n \to \mathbb{H}^{n+1}, \qquad (r,\theta)\mapsto \exp_{p_0}(r\theta),
\]
identifies \(\mathbb{H}^{n+1}\setminus\{p_0\}\) with \((0,\infty)\times\mathbb{S}^n\). In these coordinates the hyperbolic metric takes the form
\begin{equation}\label{eq:polar-metric}
d\bar{s}^2 \;=\; dr^2 + \sinh^2(r)\,\sigma_{ij}\,dx^i dx^j,
\end{equation}
where \(\sigma_{ij}\) is the standard round metric on \(\mathbb{S}^n\). Here \(r\) is the hyperbolic distance to \(p_0\). This description will be used repeatedly when we consider geodesic spheres and radial flows.

\medskip

The hyperbolic space admits a natural compactification by adding its \emph{boundary at infinity}. Two geodesic rays \(\gamma_1,\gamma_2:[0,\infty)\to\mathbb{H}^{n+1}\) are asymptotic if their hyperbolic distance remains uniformly bounded. The set of equivalence classes of geodesic rays is denoted by \(\partial_\infty\mathbb{H}^{n+1}\) and is homeomorphic to \(\mathbb{S}^n\). We write
\[
\overline{\mathbb{H}^{n+1}} \;=\; \mathbb{H}^{n+1}\cup\mathbb{S}^n_\infty,
\]
where \(\mathbb{S}^n_\infty:=\partial_\infty\mathbb{H}^{n+1}\) and $\overline{\mathbb{H}^{n+1}}$  can be endowed with a natural topology, the {\it cone topology}, which makes $\overline{\mathbb{H}^{n+1}}$ homeomorphic to a closed topological ball. If \(\Sigma\subset\mathbb{H}^{n+1}\) is a properly embedded hypersurface, its \emph{asymptotic boundary} is
\[
\partial_\infty \Sigma \;:=\; \overline{\Sigma}\cap\mathbb{S}^n_\infty,
\]
where the closure is taken in the cone topology on \(\overline{\mathbb{H}^{n+1}}\). This notion will be crucial when studying non-compact solutions and their asymptotic behavior.

\medskip

A complete connected hypersurface \(\Sigma\subset\mathbb{H}^{n+1}\) is called \emph{isoparametric} if all of its principal curvatures are constant \cite{CecilRyan}. In hyperbolic space, isoparametric hypersurfaces are necessarily totally umbilic, and up to ambient isometries they fall into four families:
\begin{itemize}
\item \emph{Geodesic spheres} \(\mathcal S_r(p_0)\) of radius \(r>0\) centered at a point \(p_0\). With respect to the inward normal, all principal curvatures are equal to \(\coth r\), so their mean curvature is
\[
H = n\,\coth r.
\]
\item \emph{Totally geodesic hyperplanes}, for which all principal curvatures vanish and hence \(H\equiv 0\).
\item \emph{Equidistant hypersurfaces} at signed distance \(s\in\mathbb{R}\) from a totally geodesic hyperplane. With a suitable choice of unit normal, all principal curvatures are equal to \(\tanh s\), so
\[
H = n\,\tanh s.
\]
\item \emph{Horospheres}, which are limits of geodesic spheres as the center tends to infinity, with the spheres passing through a fixed point of \(\mathbb H^{n+1}\). With the normal pointing into the corresponding horoball, all principal curvatures are identically \(1\), so
\[
H = n.
\]
\end{itemize}
These four classes will serve as model hypersurfaces and barriers along this paper.

\medskip

We now recall the two concrete models of \(\mathbb{H}^{n+1}\) used in the sequel and describe how the isoparametric families appear in each of them.

\medskip\noindent
\textbf{Poincar\'e ball model.}
In this model, \(\mathbb{H}^{n+1}\) is identified with the unit ball
\[
\mathbb{B}^{n+1} :=  \{ x\in\mathbb{R}^{n+1} : |x|<1\}
\]
equipped with the metric
\begin{equation}\label{eq:ball-metric}
g_{\mathbb{H}^{n+1}} \;=\; \frac{4}{(1-|x|^2)^2}\,g_{\mathbb{R}^{n+1}}.
\end{equation}
Geodesics correspond to Euclidean line segments through the origin and circular arcs that meet \(\partial\mathbb{B}^{n+1}=\mathbb{S}^n\) orthogonally. The boundary at infinity is identified with the Euclidean unit sphere:
\[
\partial_\infty\mathbb{H}^{n+1} \;\cong\; \mathbb{S}^n = \partial\mathbb{B}^{n+1}.
\]

From the Euclidean point of view, the isoparametric families can be described as follows:
\begin{itemize}
\item Geodesic spheres centered at the origin correspond to Euclidean spheres \(\{x\in\mathbb{B}^{n+1}: |x|=\rho\}\), with \(\rho\in(0,1)\) related to the hyperbolic radius \(r\) by
\begin{equation}\label{rho}
\rho \;=\; \tanh\!\left(\frac{r}{2}\right).
\end{equation}
More generally, geodesic spheres centered at arbitrary points of \(\mathbb{H}^{n+1}\) correspond to Euclidean spheres contained in \(\mathbb{B}^{n+1}\).
\item Totally geodesic hyperplanes are either Euclidean hyperplanes through the origin, or Euclidean hyperspheres meeting \(\mathbb{S}^n\) orthogonally.

\item Fix a totally geodesic hyperplane \(P\). The hypersurfaces at constant signed hyperbolic distance from \(P\)
form a coaxial pencil of \emph{Euclidean spheres \emph{and Euclidean hyperplanes}} in \(\mathbb{B}^{n+1}\),
all of them intersecting \(\mathbb{S}^n\) \emph{with constant angle} \(\alpha\in(0,\pi/2)\) (hence not orthogonally).
Equivalently, they are precisely the Euclidean spheres/hyperplanes whose ideal boundary in \(\mathbb{S}^n\)
coincides with \(\partial_\infty P\); the Euclidean hyperplane appears as the limiting member of the pencil
(a sphere of infinite radius).
\item Horospheres are Euclidean spheres tangent to \(\mathbb{S}^n\) from the inside. The point of tangency is the corresponding point of \(\partial_\infty\mathbb{H}^{n+1}\), and the interior of the Euclidean sphere represents the associated horoball.
\end{itemize}
This Euclidean picture will be used repeatedly in the Aleksandrov reflection arguments in the ball model.

\medskip\noindent
\textbf{Upper half-space model.}
In this model, \(\mathbb{H}^{n+1}\) is represented as
\[
\mathbb{R}^{n+1}_+ := \{(x,y)\in\mathbb{R}^n\times(0,\infty)\},
\]
with the metric
\begin{equation}\label{eq:halfspace-metric}
g_{\mathbb{H}^{n+1}} \;=\; \frac{1}{y^2}\,g_{\mathbb{R}^{n+1}}.
\end{equation}
The boundary at infinity is identified with
\[
\partial_\infty\mathbb{H}^{n+1} \;\cong\; \big(\mathbb{R}^n\times\{0\}\big)\,\cup\,\{\infty\},
\]
which is again homeomorphic to \(\mathbb{S}^n\). In these coordinates:
\begin{itemize}
\item Geodesic spheres correspond to Euclidean spheres contained in \(\mathbb{R}^{n+1}_+\).
\item Horospheres are given by horizontal slices \(\{y=c\}\), \(c>0\), and Euclidean spheres tangent to $\{y=0\}$.
\item Totally geodesic hyperplanes are either vertical hyperplanes or Euclidean hemispheres orthogonal to \(\{y=0\}\).
\item Equidistant hypersurfaces correspond to either Euclidean hyperplanes  or Euclidean spheres that intersect \(\{y=0\}\) \emph{with a constant angle}, \(\alpha\in(0,\pi/2)\).
\end{itemize}
The two models are related by explicit conformal diffeomorphisms which extend continuously to the boundary at infinity, and we will move freely between the two depending on the context.\\

\subsection{Aleksandrov reflection in hyperbolic space}~\label{sec:Aleksandrov}

The Aleksandrov reflection method in \(\mathbb{H}^{n+1}\) that we use below is the hyperbolic analogue of the reflection scheme for viscosity solutions of extrinsic curvature flows in \(\mathbb{R}^{n+1}\) developed by Chow and Gulliver \cite{ChowGulliver01}. We recall here the model-independent formulation and its concrete realizations in the Poincar\'e ball and upper half-space models. This method will be applied both to stationary hypersurfaces and to level-set solutions of expanding curvature flows.

Let \(P\subset\mathbb{H}^{n+1}\) a totally geodesic hyperplane. The complement \(\mathbb{H}^{n+1}\setminus P\) has two connected components, which we denote by \(H_+(P)\) and \(H_-(P)\). There exists an isometric involution 
\[
\mathcal R_P : \mathbb{H}^{n+1} \to \mathbb{H}^{n+1}
\]
fixing \(P\) pointwise and exchanging the two half-spaces \(H_+(P)\) and \(H_-(P)\); we call \(\mathcal R_P \in \mathrm{Iso}(\mathbb{H}^{n+1}) \) the \emph{reflection} across \(P\). Every such reflection extends continuously to an involution of \(\overline{\mathbb{H}^{n+1}}\), acting as a conformal diffeomorphism of \(\mathbb{S}^n_\infty\).

Let \(\Sigma\subset\mathbb{H}^{n+1}\) be a properly embedded, connected \(C^2\) hypersurface. For a one-parameter family of totally geodesic hyperplanes \(\{P(s)\}_{s\in I}\), we denote by \(\mathcal R _s := \mathcal R_{P(s)}\) the corresponding family of reflections and by \(H_\pm(s):=H_\pm(P(s))\) the associated half-spaces. The Aleksandrov reflection method consists in:
\begin{itemize}
\item starting with \(P(s)\) far away from \(\Sigma\) so that \(\Sigma\cap H_+(s)=\emptyset\);
\item moving the hyperplanes monotonically until the first value \(s_0\in I\) where $P(s_0)$ touches $\Sigma$ for the first time;
\item continue reflecting the portion \(\mathcal R_{s}(\Sigma\cap H_+(s))\) monotonically until the first value \(s_1\in I\) where reflected portion \(\mathcal R_{s_1}(\Sigma\cap H_+(s_1))\) touches \(\Sigma\);
\end{itemize}
When the hypersurface satisfies a suitable elliptic PDE, or evolves according to a suitable parabolic PDE, this method can be used to obtain reflection symmetry; see, e.g.,~\cite{ChowGulliver01,Chow2023,Harvie2024,doCarmo}.

\section{Level set flow}\label{sec:levelset}

In this section we make precise the class of expanding curvature flows considered in this paper and recall their level-set
formulation in $\H^{n+1}$, following \cite{CGG91,ChowGulliver01,EvansSpruck91}.

Let $\Sigma$ be a compact embedded hypersurface in $\H^{n+1}$. For $t\in[0,T)$, let
$\phi_t : \Sigma \to \H^{n+1}$ be a one-parameter family of embeddings such that
$\Sigma_t := \phi_t(\Sigma)$ is a compact embedded $C^2$ hypersurface with inward unit normal $\eta_t$ and principal
curvatures $\kappa_1\leq \dots \leq \kappa_n$. The geometric evolution is formally given by
\begin{equation}\label{eveq-again}
    \frac{\partial \phi_t}{\partial t}
    \;=\;  F(\kappa_1,\dots,\kappa_n; t)\,\eta_t ,
\end{equation}
where $F$ is a real-valued function of the principal curvatures and of time $t\in[0,T)$, $0<T\leq\infty$. We assume:
\begin{enumerate}
    \item[(i)] For each $t\geq 0$, the map $F_t := F(\cdot,t)$ is $C^1$ in the curvature variables.
    \item[(ii)] $F$ is non-decreasing in each $\kappa_i$ (so that the evolution is degenerate parabolic).
    \item[(iii)] Our sign convention is such that each $\kappa_i$ is positive on geodesic spheres in $\H^{n+1}$ (with respect to the inward unit normal).
\end{enumerate}
We are primarily interested in expanding flows, which in this sign convention correspond to $F>0$ on strictly convex
hypersurfaces, that is,  when all the principal curvatures $\kappa_i$ are positive. The inverse mean curvature flow is included as the case $F(\kappa_1,\dots,\kappa_n;t)=-1/H_t$.

Even if $\Sigma$ is smooth and strictly mean convex ($H>0$), the flow \eqref{eveq-again} may develop singularities in finite time.
To treat such situations and allow for low-regularity initial sets we work in the level-set framework. In this approach, one considers the evolution of a closed set represented as the zero level set of
a continuous function solving a degenerate parabolic PDE determined by $F$.

Let $\Sigma_0$ be an embedded compact $C^0$ hypersurface bounding an open (not
necessarily connected) set $\Omega_0\subset\H^{n+1}$, so that $\partial\Omega_0=\Sigma_0$. Fix $K>0$
and define the truncated signed distance
\begin{equation*}
   u_0(p) \;:=\;
   \begin{cases}
      \min\{d_{\mathbb{H}^{n+1}}(p,\Sigma_0),K\}, & p\in \H^{n+1}\setminus \overline{\Omega}_0,\\[0.2em]
      -d_{\mathbb{H}^{n+1}}(p,\Sigma_0), & p\in \overline{\Omega}_0,
   \end{cases}
\end{equation*}where $d_{\mathbb{H}^{n+1}} (\cdot, \cdot)$ denotes the hyperbolic distance function. Let $u : \H^{n+1} \times [0,+\infty) \to \R$ be the level-set flow, i.e.\ the unique continuous viscosity solution of the
degenerate parabolic equation
\begin{equation}\label{eq:levelset}
    \frac{\partial u}{\partial t} \;=-\; \,|D u(\cdot, t)|_{\mathbb{H}^{n+1}}\,
    F(\kappa_1,\dots,\kappa_n,t),
\end{equation}
where $\kappa_1,\dots,\kappa_n$ are the eigenvalues of
$D(|Du|^{-1}Du)$ \emph{restricted to the orthogonal complement $(Du)^\perp$}, and $D$ denotes the Levi-Civita connection of $\mathbb{H}^{n+1}$, with initial condition $u(\cdot,0)=u_0(\cdot)$ (see~\cite{CGG91,EvansSpruck91} for this convention).
The evolving zero level set
\begin{equation}\label{Sigmat}
    \Sigma_t \;:=\; \{ x\in\H^{n+1} : u(x, t)=0\}
\end{equation}
is then called the \emph{generalized solution} of the evolution problem \eqref{eveq-again}. This generalized solution is
unique, and for $t>0$ the set $\Sigma_t$ depends only on $\Sigma_0$ and not on the particular choice of $u_0$.
Since $\Sigma_0$ is compact, each $\Sigma_t$ is a compact set for $t\in[0,\infty)$.

It is convenient to define $u_t(\cdot)=u(\cdot, t)$.
We also introduce the sublevel and superlevel sets
\begin{equation}\label{OEt}
\begin{split}
    \Omega_t &:= \{x\in\H^{n+1} : u_t(x)<0\},\\
    E_t &:= \{x\in\H^{n+1} : u_t(x)>0\}.
\end{split}
\end{equation}
In terms of $(\Sigma_t,\Omega_t,E_t)$ the Aleksandrov reflection method can be adapted to the level-set setting and used to obtain geometric estimates for the evolving hypersurfaces.

\medskip

\subsection{Admissible foliations (compact case)}
Fix the origin \({\bf 0}\in\mathbb{H}^{n+1}\), identified with the Euclidean origin in the Poincar\'e ball model \((\mathbb{B}_1({\bf 0}),g_{\mathbb{H}^{n+1}})\). For each unit vector \(\nu\in\mathbb{S}^n\subset T_0\mathbb{H}^{n+1}\), let
\(
\gamma_\nu:\mathbb{R}\to\mathbb{H}^{n+1}
\)
be the complete geodesic parametrized by arc-length with initial conditions \(\gamma_\nu(0)={\bf 0}\) and \(\dot{\gamma}_\nu(0)=\nu\). For every \(s\in\mathbb{R}\)  there is a unique totally geodesic hyperplane \(P_\nu(s)\) that is orthogonal to \(\gamma_\nu\) at \(\gamma_\nu(s)\). The family \(\{P_\nu(s)\}_{s\in\mathbb{R}}\) is a smooth foliation of \(\mathbb{H}^{n+1}\) by totally geodesic hyperplanes perpendicular to \(\gamma_\nu\).

We define the associated half-spaces
\begin{align*}
    H_+(\nu,s) &:= \bigcup_{\tilde s>s} P_\nu(\tilde{s}),\\
    H_-(\nu,s) &:= \bigcup_{\tilde s<s} P_\nu(\tilde{s}),
\end{align*}
so that \(\gamma_\nu((s,\infty))\subset H_+(\nu,s)\) and \(\gamma_\nu((-\infty,s))\subset H_-(\nu,s)\). The reflection across \(P_\nu(s)\) will be denoted by \(\mathcal R_{\nu,s}\in\mathrm{Iso}(\mathbb{H}^{n+1})\). 

In the Poincar\'e ball model, geodesics through the origin are Euclidean diameter:
\[
\gamma_\nu(s) = \tanh\!\left(\frac{s}{2}\right)\nu,\qquad s\in\mathbb{R}.
\]
For each \(s\in\mathbb{R}\), using the Euclidean scalar product, the hyperplane \(P_\nu(s)\) is represented by
\[
       P_\nu(s)
       =\Big\{x\in\mathbb{B}_1({\bf 0}):\langle x,\nu\rangle =\frac{\tanh(s)\,\big(1+|x|^2\big)}{2}\Big\},
\]
and the associated hyperbolic half-spaces are
\begin{equation}\label{eq:Hplus-ball}
       H_+(\nu,s)
       =\Big\{x\in\mathbb{B}_1({\bf 0}):\langle x,\nu\rangle>\frac{\tanh(s)\,\big(1+|x|^2\big)}{2}\Big\},
\end{equation}
and \(H_-(\nu,s)\) is given by the complementary strict inequality. This formula will be useful for encoding the half-space \(H_+(\nu,s)\) in the Poincar\'e ball.

In this model, the reflection \(\mathcal R_{\nu,s}\) across \(P_\nu(s)\) is the restriction of a M\"{o}bius transformation of \(\mathbb{B}_1({\bf 0})\). When $s=0$, it is realized as Euclidean reflection across a linear hyperplane through the origin orthogonal to $\nu$, and in the general case as Euclidean inversion across a sphere orthogonal to \(\partial\mathbb{B}_1({\bf 0})\) where the Euclidean center and radius are
\[
c_\nu(s) = \coth(s)\,\nu,\qquad R(s) = \sqrt{\coth^2(s)-1} = \frac{1}{\sinh(s)}.
\]

\medskip

\subsection{Optimal admissible values (compact case).}
 We now introduce the notion of admissible values, following Chow and Gulliver and our hyperbolic adaptation.

\begin{definition}\label{admissible}
Let \(\Sigma\) be a compact embedded hypersurface in \(\mathbb{H}^{n+1}\) and let \(\Omega\) be the bounded open set such that \(\partial\Omega=\Sigma\). Fix \(\nu\in\mathbb{S}^n\). A parameter \(s\in\mathbb{R}\) is called \emph{admissible in the direction \(\nu\)} (see Figure \ref{fig:placeholder}) if
\[
\mathcal R_{\nu,\tilde{s}}(\Sigma\cap H_+(\nu,\tilde{s}))\subset \overline{\Omega}\qquad\text{for all }\tilde{s}\in(s,\infty).
\]
The \emph{optimal admissible value} in the direction \(\nu\) is
\[
s_0(\nu) := \inf\{s\in\mathbb{R} : s \text{ is an admissible value in the direction }\nu\}.
\]
\end{definition}

\begin{figure}[!ht]
    \centering
\tikzset{every picture/.style={line width=0.75pt}} 
\begin{tikzpicture}[x=0.75pt,y=0.75pt,scale=0.9]

\draw   (200,150) .. controls (200,79.86) and (256.86,23) .. (327,23) .. controls (397.14,23) and (454,79.86) .. (454,150) .. controls (454,220.14) and (397.14,277) .. (327,277) .. controls (256.86,277) and (200,220.14) .. (200,150) -- cycle ;
\draw    (200,150) -- (454,150) ;
\draw   (274,135) .. controls (294,125) and (316,115) .. (364,135) .. controls (412,155) and (394,150) .. (396,170) .. controls (398,190) and (286,221) .. (265,184) .. controls (244,147) and (254,145) .. (274,135) -- cycle ;
\draw  [dash pattern={on 0.84pt off 2.51pt}]  (257,152) .. controls (270,187) and (349,196) .. (397,152) ;

\draw (461,140) node [anchor=north west][inner sep=0.75pt]   [align=left] {$\displaystyle P_{\nu }( s)$};
\draw (435,55) node [anchor=north west][inner sep=0.75pt]   [align=left] {$\displaystyle H_{+}( \nu ,s)$};
\draw (436,223) node [anchor=north west][inner sep=0.75pt]   [align=left] {$\displaystyle H_{-}( \nu ,s)$};
\draw (256,113) node [anchor=north west][inner sep=0.75pt]   [align=left] {$\displaystyle \Sigma $};
\draw (375,107) node [anchor=north west][inner sep=0.75pt]   [align=left] {$\displaystyle E$};
\draw (308,158) node [anchor=north west][inner sep=0.75pt]   [align=left] {$\displaystyle \Omega $};
\draw (272.25,180.99) node [anchor=north west][inner sep=0.75pt]  [font=\scriptsize,rotate=-0.01,xslant=-0.08] [align=left] {$\displaystyle R_{\nu ,s}( \Sigma \cap H_{+}( \nu ,s))$};
\end{tikzpicture}
    \caption{Admissibility of the value $s_0=0$}
    \label{fig:placeholder}
\end{figure}
We further define the \emph{overall optimal admissible value} by
\[
\bar{s} := \max\{ s_0(\nu) : \nu\in\mathbb{S}^n\}.
\]
Note that \(\bar{s}\geq 0\), since one always has \(s_0(\nu)+s_0(-\nu)\geq 0\) for all \(\nu\in\mathbb{S}^n\). 

In the level-set formulation, one can express admissibility directly in terms of the continuous function \(u:\mathbb{H}^{n+1}\to\mathbb{R}\) whose zero set describes the evolving hypersurfaces. 

\begin{definition}\label{admissible2}
    Let $\Sigma,\Omega,E$ be disjoint sets such that $\H^{n+1}=\Sigma\cup\Omega\cup E$. Given $\nu\in \S^n$ and $s\in\R$, we
    say that $s$ is \emph{admissible for the triple} $(\Sigma,\Omega,E)$ with respect to $\nu$ if
\begin{enumerate}
    \item[(A1)]\label{A1} 
        $\mathcal R_{\nu,s}(\Omega\cap H_+(\nu,s)) \subset \Omega \cap H_-(\nu,s)$, and 
    \item[(A2)]\label{A2} 
        $E\cap H_-(\nu,s) \subset \mathcal R_{\nu,s}(E \cap H_+(\nu,s))$.
\end{enumerate}
    Since $\H^{n+1}=\Sigma\cup\Omega\cup E$ is a disjoint union, these containments are equivalent to
\begin{enumerate}
    \item[(B1)]\label{B1}        
        $\mathcal R_{\nu,s}((\Omega\cup\Sigma)\cap H_+(\nu,s)) \subset (\Omega\cup\Sigma) \cap H_-(\nu,s)$, and
    \item[(B2)] \label{B2}
        $(E\cup\Sigma)\cap H_-(\nu,s) \subset \mathcal R_{\nu,s}((E\cup\Sigma) \cap H_+(\nu,s))$.
\end{enumerate}
\end{definition}

When $\Sigma$ is an embedded compact connected  hypersurface, $\Omega$ is the bounded open set with boundary
$\Sigma$, and $E$ is its complement, this definition agrees with the notion of admissibility introduced in the previous
section.

\begin{definition}\label{admissible3}
    Let $u:\H^{n+1}\to \R$ be a continuous function. Given $\nu\in \S^n$ and $s\in\R$, we say that $s$ is
    \emph{admissible for $u$ with respect to $\nu$} if
    \begin{equation}\label{equ}
        u(x)\;\geq\; u(x^*) \qquad \forall x\in H_+(\nu,s),
    \end{equation}
    where $x^* := \mathcal R_{\nu,s}(x)$.
\end{definition}

\begin{remark} We will also use the notions of the \emph{optimal admissible value} and the \emph{overall optimal admissible value}, as defined in Definition~\ref{admissible}, corresponding to the notions of admissibility as in Definitions \ref{admissible2} and \ref{admissible3}.
\end{remark}

\begin{lemma}\label{lem: admissibility}
    Let $\Sigma$ be the zero set of a continuous function $u$. If $s$ is admissible for $u$ with respect to $\nu$, then $s$ is
    admissible for the triple $(\Sigma,\Omega,E)$ with respect to $\nu$, where $\Omega=\{u<0\}$ and $E=\{u>0\}$. If $u$ is the
    signed distance function, then the converse is also true.
\end{lemma}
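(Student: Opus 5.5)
The forward direction will follow directly from the pointwise inequality \eqref{equ}, using only that $\mathcal R_{\nu,s}$ is an isometric involution exchanging $H_+(\nu,s)$ and $H_-(\nu,s)$. For the converse I will exploit the special structure of the signed distance function: on each side of $\Sigma$ it equals $\pm$ the distance to $\Sigma$, and distances are preserved by $\mathcal R_{\nu,s}$.

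\emph{Forward direction.} Assume \eqref{equ}. For (A1), take $x\in\Omega\cap H_+(\nu,s)$, so $u(x)<0$. Then $u(x^*)\le u(x)<0$, so $x^*\in\Omega$. Also $x^*\in H_-(\nu,s)$ because the reflection swaps the half-spaces. For (A2), take $y\in E\cap H_-(\nu,s)$ and set $x=y^*\in H_+(\nu,s)$. Then $u(x)\ge u(x^*)=u(y)>0$, so $x\in E\cap H_+(\nu,s)$, and $y=\mathcal R_{\nu,s}(x)$ lies in $\mathcal R_{\nu,s}(E\cap H_+(\nu,s))$.

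\emph{Converse.} Let $u$ be the signed distance: $u=d(\cdot,\Sigma)$ on $E$ and $u=-d(\cdot,\Sigma)$ on $\Omega$, possibly truncated at $K$ on $E$. Since $\min\{\cdot,K\}$ is monotone, truncation preserves the inequalities below. Assume (A1)--(A2), equivalently (B1)--(B2), and fix $x\in H_+:=H_+(\nu,s)$. The proof splits into cases.
\begin{enumerate}
\item[(a)] If $x\in E\cup\Sigma$ and $x^*\in\Omega\cup\Sigma$, then $u(x)\ge0\ge u(x^*)$ and there is nothing to prove.
\item[(b)] The case $x\in\Omega$, $x^*\in E$ cannot occur: (A1) forces $x^*\in\Omega$.
\item[(c)] If $x,x^*\in\Omega$, I must show $d(x,\Sigma)\le d(x^*,\Sigma)$. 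Let $q\in\Sigma$ attain $d(x^*,\Sigma)$, and let $\gamma$ be the minimizing geodesic from $x^*$ to $q$. If $q\in \overline{H_-}$, then $d(x,q^*)\le$ \dots\ this estimate is awkward, so I will argue instead via balls. The open ball $B$ of radius $r=d(x^*,\Sigma)$ around $x^*$ lies in $\Omega$. The ball $B^*=\mathcal R(B)$ is centered at $x$. I claim $B^*\subset\Omega$, which gives $d(x,\Sigma)\ge r$ and hence $u(x)\le u(x^*)$. The sign here needs checking: we want $u(x)\ge u(x^*)$, i.e.\ $-d(x,\Sigma)\ge -d(x^*,\Sigma)$, i.e.\ $d(x,\Sigma)\le d(x^*,\Sigma)$. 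So the correct target is the reverse inclusion: I will show that the ball $B'$ of radius $\rho=d(x,\Sigma)$ around $x$, which lies in $\Omega$, reflects into $\Omega$. Points of $B'\cap H_+$ reflect into $\Omega$ by (A1). Points of $B'\cap \overline{H_-}$ reflect into $H_+$, and these reflected points lie within distance $\rho$ of $x^*$ \dots\ more directly, a point $z\in B'\cap\overline{H_-}$ has reflection $z^*$ with $d(z^*,x)=d(z,x^*)\le d(z,x)<\rho$. Here I use the standard fact that for $x,z^*$ on the same side, $d(x,z^*)\le d(x,z)$; this follows since the geodesic from $x$ to $z$ crosses $P$, and folding it gives a path of equal length from $x$ to $z^*$. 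Hence $z^*\in B'\subset\Omega$, and then by (A1) $z=(z^*)^*\in\Omega$. So $\mathcal R(B')\subset\Omega$, the ball of radius $\rho$ about $x^*$ misses $\Sigma$, and $d(x^*,\Sigma)\ge\rho$, as required.
\item[(d)] If $x,x^*\in E$, the symmetric argument applies with (A2). Let $r=d(x^*,\Sigma)$, so the ball $B$ about $x^*$ of radius $r$ lies in $E$. I show $\mathcal R(B)\subset E$, which gives $d(x,\Sigma)\ge r$. For $w\in B\cap H_-$, (A2) gives $w^*\in E$. For $w\in B\cap\overline{H_+}$, the folding fact gives $d(w^*,x^*)\le d(w,x^*)<r$, so $w^*\in B\subset E$. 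Points on $P$ are fixed and cause no trouble.
\end{enumerate}

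The main obstacle is case (c)/(d): getting the direction of each inclusion right and justifying the folding inequality $d(x,z^*)\le d(x,z)$ for $x,z^*$ on the same side of $P$. That inequality is where the geometry of totally geodesic reflections, rather than mere admissibility, is used.
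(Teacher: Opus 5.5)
Your proof is correct. It uses the same two ingredients as the paper:

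\begin{itemize}
\item the folding inequality $d(z,x^*)\le d(z,x)$ for $z$ in the closed half-space containing $x^*$;
\item the admissibility inclusions.
\end{itemize}

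Note that (A1) is the contrapositive of (B2), and (A2) is the contrapositive of (B1). So the paper's use of (B2) for $x\in\Omega\cup\Sigma$ and of (B1) for $x\in E\cup\Sigma$ is the same input as your use of (A1) and (A2).

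The difference is in how the argument is organized.

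\begin{itemize}
\item When $x^*\in E$, the paper picks a nearest point $y\in\Sigma$ to $x$. It then splits according to whether $y\in H_-$ (fold directly) or $y\in H_+$ (reflect via (B1)).
\item For $x\in\Omega$, the paper chains the distances from $x$ and $x^*$ to the pieces $(E\cup\Sigma)\cap H_\pm$.
\item You instead take the maximal ball about $x$ (case (c)) or about $x^*$ (case (d)). You show that its reflection stays in $\Omega$, resp.\ $E$. On the part of the ball in the open half-space $H_+$, resp.\ $H_-$, you use (A1), resp.\ (A2); on the part in the closed opposite half-space you use folding.
\end{itemize}

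What your version gains is that you never need to know on which side of $P$ the distance from $x^*$ to $\Sigma$ is realized. This is more than cosmetic. The final equality $-d(x^*,(E\cup\Sigma)\cap H_-)=u(x^*)$ in the paper's $\Omega$-case assumes that this distance is attained in $H_-$. That fails, for instance, for a large geodesic ball centred in $H_-$ that pokes slightly across $P$, with $x\in\Omega\cap H_+$ on the geodesic through the centre orthogonal to $P$. The nearest point of $\Sigma$ to $x^*$ then lies in $H_+$. The estimate that closes this step, $d(x^*,(E\cup\Sigma)\cap\overline{H_+})\ge d(x,\Sigma)$, is exactly your folding step on $B'\cap\overline{H_-}$. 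Your version also handles the truncation at $K$, which the paper passes over.

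A few things to clean up when you write this out:

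\begin{itemize}
\item Delete the abandoned attempts at the start of case (c): the unfinished estimate, and the ball about $x^*$ aimed at the wrong inequality.
\item Drop the redundant clause ``by (A1) $z=(z^*)^*\in\Omega$''. What $\mathcal R(B')\subset\Omega$ requires is $z^*\in\Omega$, which you already have from $z^*\in B'$.
\item Mention the unlisted case $x\in\Sigma$, $x^*\in E$. It is excluded by (A2). In any event your case (d) argument uses only $x^*\in E$, so it covers this case.
\end{itemize}
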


\begin{proof}
    Write $P:=P_\nu (s)$ and $H_{\pm} := H_{\pm}(\nu,s)$. If $s$ is admissible for $u$ with respect to $\nu$, then for all
    $x\in\Omega\cap H_{+}$ we have $u(x^*)\leq u(x)<0$, so $x^* \in \Omega\cap H_{-}$. Similarly, if $x\in E\cap H_-$, then
    $u(x^*)\geq u(x)>0$, which implies $x^*\in E\cap H_+$. Hence $s$ is admissible for the triple $(\Sigma,\Omega,E)$ with
    respect to $\nu$.

    Conversely, suppose that $s$ is admissible for the triple $(\Sigma,\Omega,E)$ with respect to $\nu$ and let $u$ be the
    signed distance to $\Sigma$, we need to show \eqref{equ}. 
    
    First, assume that $x\in (E\cup \Sigma)\cap H_+$. If $x^*\in \Omega \cup \Sigma$, then $u(x^*)\leq 0\leq u(x)$ and we are done. Consider now the case $x^*\in E$. Let $y\in\Sigma$ be a closest point to $x$ in $\Sigma$, so that $d_{\mathbb{H}^{n+1}}(x,y)=d_{\mathbb{H}^{n+1}}(x,\Sigma)$. Let $[x,x^*]$ denote the geodesic segment joining $x$ and $x^*$ that intersects $P$ orthogonally by construction. Hence, for any $p\in P$, $d_{\mathbb{H}^{n+1}}(x,p)=d_{\mathbb{H}^{n+1}}(x^*,p)$.
        \begin{itemize}
            \item If $y\in H_-$, the geodesic segment $[y,x]$ intersects $P$ at a unique point $p=P\cap [y,x]$. Then, by the triangle inequality, we get 
            \begin{align*}
            d_{\mathbb{H}^{n+1}}(x^*,y) &\le d_{\mathbb{H}^{n+1}}(x^*,p)+d_{\mathbb{H}^{n+1}}(p,y)\\
            & =d_{\mathbb{H}^{n+1}}(x,p)+d_{\mathbb{H}^{n+1}}(p,y)=d_{\mathbb{H}^{n+1}}(x,y),
            \end{align*}
            hence $u(x^*)\leq u(x)$.
    
            \item If instead $y\in H_+$, then $y^*\in (\Omega\cup\Sigma)\cap H_-$ and, using \eqref{B1}, we obtain
    \begin{align*}
        u(x)
        &= d_{\mathbb{H}^{n+1}}\big(x,(\Omega \cup \Sigma)\cap H_+\big)= d_{\mathbb{H}^{n+1}}\Big(x^*,\mathcal R_{\nu,s}\big((\Omega\cup\Sigma)\cap H_+\big)\Big)\\
        &\geq d_{\mathbb{H}^{n+1}}\big(x^*,(\Omega\cup\Sigma)\cap H_-\big)= u(x^*).
    \end{align*}
    \end{itemize}

    Next assume that $x \in (\Omega\cup\Sigma)\cap H_+$. In this case, \eqref{B2} implies
    \begin{align*}
        d_{\mathbb{H}^{n+1}}\big(x,(E\cup \Sigma)\cap H_-\big)
        &\geq d_{\mathbb{H}^{n+1}}\big(x^*,(E\cup \Sigma)\cap H_-\big)\\
        &\geq d_{\mathbb{H}^{n+1}}\Big(x^*,\mathcal R_{\nu,s}\big((E\cup \Sigma)\cap H_+\big)\Big)\\
        &= d_{\mathbb{H}^{n+1}}\big(x,(E\cup \Sigma)\cap H_+\big),
    \end{align*}
    and hence
    \begin{align*}
        u(x)
        &= -\,d_{\mathbb{H}^{n+1}}\big(x,(E\cup\Sigma)\cap H_+\big)\\
        &\geq -\,d_{\mathbb{H}^{n+1}}\big(x^*,(E\cup\Sigma)\cap H_-\big)= u(x^*).
    \end{align*}
    Therefore, $s$ is admissible for $u$ with respect to $\nu$.

\end{proof}

\medskip

\subsection{Optimal admissible value under the flow}

The next result shows that, under the flow \eqref{eveq-again}, the optimal admissible value does not deteriorate over time. Combined with the level-set formulation described above, this provides the hyperbolic analogue of the Euclidean viscosity reflection argument of Chow and Gulliver~\cite{ChowGulliver01}. 

Recall that throughout this section, our solutions -whether viscosity or strong- are assumed to be compact. Moreover, for each weak solution \(u\), there is an associated family of triplets \((\Sigma_t, \Omega_t, E_t)\) as defined in \eqref{Sigmat} and \eqref{OEt}.

\begin{theorem}\label{thm: admissible for all t}
Let $u$ be a solution to the level set flow given by \eqref{eq:levelset}. For each
\(\nu\in\mathbb{S}^n\), let \(s_0(\nu)\) denote the optimal admissible value for \((\Sigma_0,\Omega_0, E_0)\) in the direction \(\nu\). Then, 
\begin{equation*}
    s_0(\nu)\;\geq\;s_t(\nu)\qquad\text{for all }\nu\in\mathbb{S}^n \text{ and every } t\in[0,T),
\end{equation*}
where \(s_t(\nu)\) is the optimal admissible value for \((\Sigma_t, \Omega_t, E_t)\) in the direction \(\nu\).
\end{theorem}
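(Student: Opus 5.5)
The plan is to show that any $s > s_0(\nu)$ that is admissible for $(\Sigma_0,\Omega_0,E_0)$ stays admissible for $(\Sigma_t,\Omega_t,E_t)$ for every $t$. This gives $s_t(\nu)\le s$, and taking the infimum yields $s_t(\nu)\le s_0(\nu)$.

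The first step is to pass to the level-set function. Choose the initial datum $u_0$ to be the (truncated) signed distance to $\Sigma_0$. By Lemma~\ref{lem: admissibility}, admissibility of the triple at time $0$ is equivalent to admissibility of $u_0$:
\[
u_0(x)\ge u_0(x^*)\quad \text{for all } x\in H_+(\nu,s).
\]
The truncation at $K$ is harmless here. The map $\min\{\cdot,K\}$ is monotone, so it preserves the inequality, and by independence of the choice of $u_0$ the sets $\Sigma_t$ are unaffected. The full converse of the lemma is not needed, only the direction "triple $\Rightarrow$ signed distance".

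The core step is a comparison principle on the half-space. Set $v(x,t):=u(\mathcal R_{\nu,s}(x),t)$. Since $\mathcal R_{\nu,s}$ is an isometry of $\H^{n+1}$, the operator in \eqref{eq:levelset} is invariant under it: $|Dv|$ and the eigenvalues of $D(|Dv|^{-1}Dv)$ on $(Dv)^\perp$ are the pullbacks of those of $u$. Hence $v$ is again a viscosity solution of \eqref{eq:levelset}, with $F$ depending only on curvatures and $t$.

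We then compare $u$ and $v$ on the domain $H_+(\nu,s)\times[0,T)$. On the boundary $P_\nu(s)$ we have $x=x^*$, so $u=v$ there. At $t=0$ we have $u\ge v$. At infinity, both functions equal the constant $K$ outside a compact set, since $\Sigma_t$ stays compact and $u$ stays truncated by the standard properties of the level-set flow. The comparison principle for viscosity sub- and supersolutions on the domain with boundary, as in Chen--Giga--Goto and Evans--Spruck adapted to $\H^{n+1}$, then gives $u\ge v$ on $H_+(\nu,s)\times[0,T)$. In other words, $s$ is admissible for $u_t$.

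Finally, we apply the first (trivial) direction of Lemma~\ref{lem: admissibility} to $u_t$. This shows that $s$ is admissible for $(\Sigma_t,\Omega_t,E_t)$, which is the required conclusion.

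The main obstacle is justifying the comparison principle on the unbounded half-space domain with the possibly singular operator: $F=-1/H$ blows up, and the operator is degenerate where $Du=0$. I would first try to reduce to a bounded domain. Because $u$ and $v$ are both identically $K$ outside a large ball for $t\le T'<T$, the comparison only needs to be carried out on a bounded region whose boundary consists of a piece of $P_\nu(s)$, where $u=v$, and a piece where $u=v=K$. On that bounded region I would invoke the standard comparison principle of the Evans--Spruck/CGG theory in the form used by Chow--Gulliver, checking that $F$ satisfies the required continuity and monotonicity (i) and (ii) in the generalized sense at $Du=0$. If that form of comparison is not directly available, the fallback is to perturb: replace $v$ by $v-\epsilon$ and argue at a first touching point with the doubling-of-variables argument. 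The key point there is that $\mathcal R_{\nu,s}$ preserves distances, so the penalization term $|x-y|^2$ can be replaced by $d_{\H^{n+1}}(x,y)^2$, which remains smooth near the diagonal.
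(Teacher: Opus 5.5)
Your proof is correct and follows the same route as the paper. You pass to the signed-distance level-set function via Lemma~\ref{lem: admissibility}, use the invariance of \eqref{eq:levelset} under the isometry $\mathcal R_{\nu,s}$ together with the viscosity comparison principle to propagate $u_t(x)\ge u_t(x^*)$ on $H_+(\nu,s)$, and then return to the triple via the easy direction of the lemma. Your additional care fills in points the paper passes over silently: you work with each $s>s_0(\nu)$ rather than asserting that $s_0(\nu)$ itself is admissible, you check that the truncation at $K$ preserves the inequality, and you make explicit that the comparison is on the half-space with boundary data $u=v$ on $P_\nu(s)$.
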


\begin{proof}
Fix \(\nu\in\mathbb{S}^n\) and write \(s_0:=s_0(\nu)\). Let \(u_0\) be the signed distance function to \(\Sigma_0\). Since \(s_0\) is
admissible for the triple \((\Sigma_0,\Omega_0,E_0)\) with respect to \(\nu\), Lemma~\ref{lem: admissibility} implies that
\(s_0\) is admissible for \(u_0\) in the direction \(\nu\); that is,
\[
u_0(x)\geq u_0(x^*)\qquad\text{for all }x\in H_+(\nu,s_0),
\]
with equality along \(P_\nu(s_0)\), since \(x=x^*\) when \(x\in P_\nu(s_0)\).

Let \(u_t\) be the level-set solution of \eqref{eq:levelset} with initial data \(u_0\). Since the reflection $\mathcal R_{\nu,s_0}$ is an isometry of $\H^{n+1}$, the level-set equation~\eqref{eq:levelset} is invariant under $\mathcal R_{\nu,s_0}$; hence $u_t\circ\mathcal R_{\nu,s_0}$ is also a viscosity solution of~\eqref{eq:levelset} with initial data $u_0\circ\mathcal R_{\nu,s_0}$. By the comparison principle for viscosity solutions (cf.~\cite[Theorem~4.1]{CGG91} and~\cite[Theorem~3.2]{EvansSpruck91}), the inequality is preserved forward in time, that is
\[
u_t(x)\geq u_t(x^*)\qquad\text{for all }x\in H_+(\nu,s_0),\ t>0\,.
\]
 Hence \(s_0\) is admissible for \(u_t\) with respect to \(\nu\) for every \(t>0\). Applying
Lemma~\ref{lem: admissibility} again, we conclude that \(s_0\) is admissible for the triple \((\Sigma_t,\Omega_t,E_t)\) with
respect to \(\nu\) for all \(t>0\). By definition of the optimal admissible value,
\[
s_t(\nu)\leq s_0(\nu)
\]
for all \(t\in[0,T)\), as claimed.
\end{proof}

We aim to prove a uniform (in $t$) Lipschitz estimate for the evolving hypersurfaces (see Theorem~\ref{thm: Lipschitz bound}). To this end, we first record a monotonicity property of the signed distance function along normal geodesics issuing from an optimally placed hyperplane.

\begin{proposition}\label{prop: monotonicity}
Let \(\Sigma_0\) be the zero set of a continuous function $u_0$, and let \(\bar{s}\) be the overall optimal admissible value for $u$.
For any \(\nu\in\mathbb{S}^n\) and \(y\in P_\nu(\bar{s})\), let \(\gamma_y(r)\) be the geodesic (parametrized by arc-length) with initial
conditions
\[
\gamma_y(0)=y,\qquad \gamma_y'(0)=N(y)\,,
\]
where  \(N(y)\) is the unit normal to \(P_\nu(\bar{s})\) at \(y\) pointing into \(H_+(\nu,\bar{s})\). 
Then the function
\[
f_y(r):=u_0(\gamma_y(r))\,
\]
is non-decreasing for all $ r\geq 0$.
\end{proposition}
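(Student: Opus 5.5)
Fix $\nu$, $y\in P_\nu(\bar s)$ and $0\le r_1<r_2$; we must show $u_0(\gamma_y(r_1))\le u_0(\gamma_y(r_2))$. The idea is to realize $\gamma_y(r_1)$ as the reflection of $\gamma_y(r_2)$ across a totally geodesic hyperplane that is admissible for $u_0$, and then apply Definition~\ref{admissible3}. Set $m:=(r_1+r_2)/2>0$ and let $Q$ be the totally geodesic hyperplane orthogonal to $\gamma_y$ at $\gamma_y(m)$. The reflection $\mathcal R_Q$ fixes $\gamma_y(m)$ and reverses the geodesic $\gamma_y$, so $\mathcal R_Q(\gamma_y(r_2))=\gamma_y(r_1)$. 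Moreover $\gamma_y(r_2)$ lies on the side of $Q$ not containing $P_\nu(\bar s)$. Hence it suffices to show that $Q$ belongs to some foliation $\{P_{\mu}(s)\}_s$ with parameter $s>\bar s\ge s_0(\mu)$, with $\gamma_y(r_2)\in H_+(\mu,s)$. Admissibility then gives $u_0(\gamma_y(r_2))\ge u_0(\gamma_y(r_1))$.

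Identifying $Q$ with a member of the foliation is the geometric core. Every totally geodesic hyperplane not through ${\bf 0}$ is $P_\mu(s)$ for a unique $\mu\in\S^n$ and $s>0$, namely $s=d({\bf 0},Q)$ and $\mu$ the direction of the geodesic from ${\bf 0}$ meeting $Q$ orthogonally. It also has $H_+(\mu,s)$ equal to the side of $Q$ not containing ${\bf 0}$. So I must check two things. First, ${\bf 0}$ and $P_\nu(\bar s)$ lie on the same side of $Q$; since $\bar s\ge0$, ${\bf 0}\in \overline{H_-(\nu,\bar s)}$, and $Q$ is disjoint from $P_\nu(\bar s)$, being orthogonal to $\gamma_y$ at positive distance from $P_\nu(\bar s)$. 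This holds because two hyperplanes orthogonal to a common geodesic are disjoint. Second, $d({\bf 0},Q)>\bar s$. For this I would use that $Q\subset H_+(\nu,\bar s)$. Then every geodesic from ${\bf 0}$ to $Q$ crosses $P_\nu(\bar s)$, which has distance $\bar s$ from ${\bf 0}$, and continues a positive distance afterwards; the strict inequality follows from compactness of the closest point, or from the fact that $Q$ stays a positive distance from $P_\nu(\bar s)$. The degenerate case $\bar s=0$ with ${\bf 0}\in P_\nu(0)$ is handled the same way, since ${\bf 0}$ is still not in $H_+$.

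To close the argument I need admissibility at every $s>\bar s$, not only at $s_0(\mu)$. Definition~\ref{admissible} builds this in, since admissibility of $s_0$ requires the containment for all $\tilde s>s_0$. For the $u$-version, $\bar s$ is defined as a maximum of optimal admissible values. So I would note, or take as the intended meaning of the remark after Definition~\ref{admissible3}, that admissibility for $u$ at $s$ includes all larger parameters, giving that every $s>s_0(\mu)$ is admissible.

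The main obstacle is the side and distance bookkeeping in the second step, namely confirming $\gamma_y(r_2)\in H_+(\mu,s)$ and $s>\bar s$. I would verify this cleanly in the Poincar\'e ball model by moving $y$ to the origin with an isometry, so that $\gamma_y$ becomes a diameter. The case $r_1=r_2$ is trivial, and continuity in $r$ is immediate from continuity of $u_0$.
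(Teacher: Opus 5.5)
Your proposal is correct and follows the same route as the paper: reflect $\gamma_y(r_2)$ onto $\gamma_y(r_1)$ across the hyperplane $Q$ orthogonal to $\gamma_y$ at the midpoint, recognize $Q$ as a leaf $P_\mu(s)$ with $s>\bar s$ and $\gamma_y(r_2)\in H_+(\mu,s)$, and apply admissibility. Your version is more careful than the written one: you take $\mu$ to be the direction of the perpendicular from ${\bf 0}$ to $Q$ (rather than the tangent of $\gamma_y$ at the midpoint), you justify $d({\bf 0},Q)>\bar s$ through the ultraparallel separation of $Q$ from $P_\nu(\bar s)$, and you make explicit the upward-closedness of admissible values, which the paper uses without stating it.
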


\begin{proof}
Let $0\leq r<r'$ and set $x:=\gamma_y(r)$, $x':=\gamma_y(r')$. Let $\nu_y:=\dot\gamma_y(\tfrac{r+r'}{2})$ be the unit tangent to $\gamma_y$ at the midpoint of $[x,x']$, and let $P_{\nu_y}(\tilde{s})$ be the totally geodesic hyperplane perpendicular to $\gamma_{\nu_y}$ that bisects $[x,x']$, where $\tilde s$ is the corresponding foliation parameter (see Figure~\ref{fig:mono}). Then $x'\in H_+(\nu_y,\tilde{s})$, the reflection $\mathcal R_{\nu_y,\tilde s}$ across $P_{\nu_y}(\tilde s)$ satisfies $(x')^*=x$, and by construction the midpoint of $[x,x']$ lies on $\gamma_y$ at hyperbolic distance $(r+r')/2>0$ from $P_\nu(\bar{s})$, so $\tilde{s}>\bar{s}$. Hence $\tilde{s}$ is admissible for the signed distance function $u_0$ in the direction $\nu_y$, and Lemma~\ref{lem: admissibility} yields
\[
u_0(x) \;=\; u_0\bigl((x')^*\bigr) \;\leq\; u_0(x')\,,
\]
which finishes the proof.
\begin{figure}[!ht]
    \centering
\includegraphics[width=10cm]{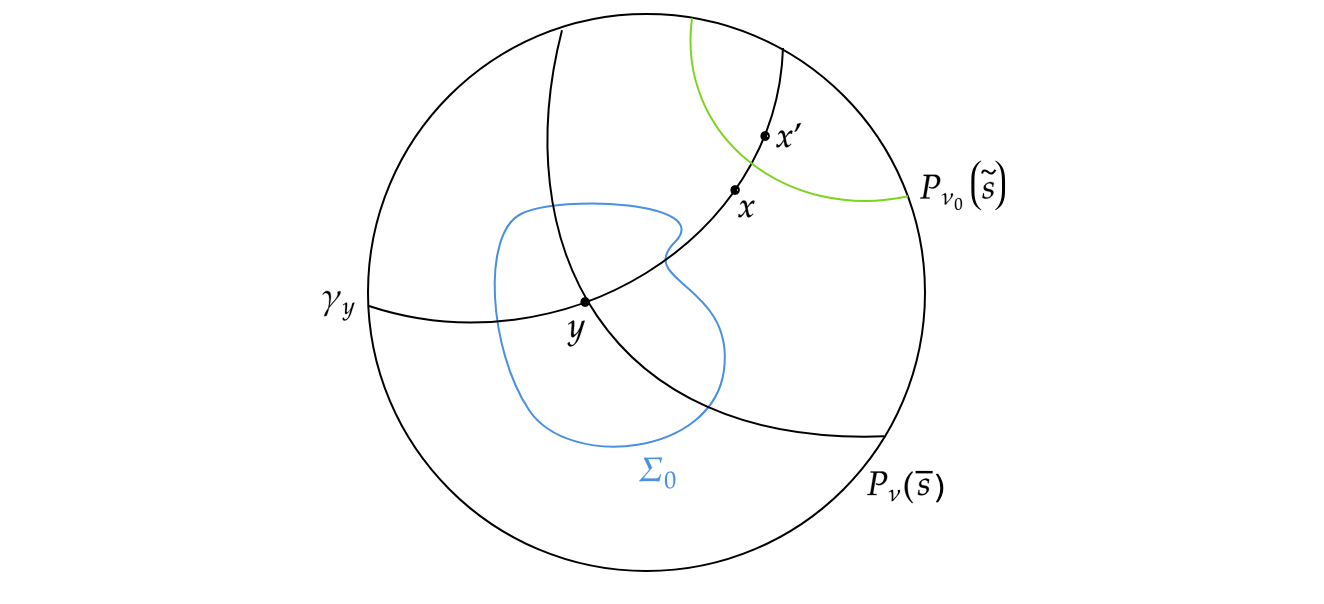}
    \caption{Monotonicity of \(f_y\) along the geodesic \(\gamma_y\).}
    \label{fig:mono}
\end{figure}
\end{proof}

We can now state the Lipschitz estimate for the evolving hypersurfaces in exponential coordinates.

\begin{theorem}\label{thm: Lipschitz bound}
Let $u$ be a solution to the level set flow given by \eqref{eq:levelset}. Let \(\bar{s}\) be the
overall optimal admissible value for \((\Sigma_0, \Omega_0, E_0)\). Then, for every \(\nu_0\in\mathbb{S}^n\) and every \(t\in(0,T)\), 
\[
\partial E_t\cap H_+(\nu_0,\bar{s})
\text{ and }
\partial \Omega_t\cap H_+(\nu_0,\bar{s})
\]
are Lipschitz graphs (in exponential coordinates) over \(P_{\nu_0}(\bar{s})\), with the Lipschitz bound independent of the time \(t\) and  the speed \(F\).
\end{theorem}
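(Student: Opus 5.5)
The plan is to mimic the Chow--Gulliver cone argument: combine the time-independence of admissibility (Theorem~\ref{thm: admissible for all t}) with the monotonicity along normal geodesics (Proposition~\ref{prop: monotonicity}). The monotonicity will be applied not only in the direction $\nu_0$ but in every direction $\nu$ in a fixed neighbourhood of $\nu_0$. This should produce an open cone of monotone directions at every point of $H_+(\nu_0,\bar s)$, which gives the graph property and the Lipschitz bound.

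\emph{Step 1.} Fix $t\in(0,T)$. By Theorem~\ref{thm: admissible for all t}, $s_t(\nu)\le s_0(\nu)\le\bar s$ for all $\nu$. Since admissibility is monotone in $s$, every $\tilde s>\bar s$ is admissible for $(\Sigma_t,\Omega_t,E_t)$ in every direction $\nu$. Let $d_t$ be the signed distance to $\Sigma_t$. By Lemma~\ref{lem: admissibility}, every $\tilde s>\bar s$ is also admissible for $d_t$, and Proposition~\ref{prop: monotonicity} applies to $d_t$ with the same $\bar s$. Equivalently, the bisector argument of that proposition applies directly. Thus for any two points $x\neq x'$ whose bisecting hyperplane is $P_\nu(\tilde s)$ with $\tilde s>\bar s$ and $x'\in H_+(\nu,\tilde s)$, we get $d_t(x)\le d_t(x')$. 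In particular $x'\in\Omega_t$ forces $x\in\Omega_t$, and $x\in E_t$ forces $x'\in E_t$.

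\emph{Step 2 (the cone condition).} Fix $x\in H_+(\nu_0,\bar s)$. Work in exponential coordinates adapted to $P_{\nu_0}(\bar s)$, meaning the foot point $y\in P_{\nu_0}(\bar s)$ and the height $r>0$ along $\gamma_y$. We want to show that there is an angle $\alpha>0$, depending only on $n$ and not on $t$ or $F$, with the following property. For every $x'$ lying in the cone with vertex $x$, axis $\gamma_y'$ and aperture $\alpha$, and lying above $x$, the bisector of $[x,x']$ is a hyperplane $P_\nu(\tilde s)$ with $\tilde s>\bar s$ and $x'$ on its positive side.

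Here is how I expect this to go. Since $\bar s\ge0$ is the \emph{overall} value, it suffices that the bisector is disjoint from the closed half-space $\overline{H_-(\nu,\bar s)}$ for its own normal direction $\nu$. Equivalently, its hyperbolic distance from the origin along $\gamma_\nu$ must exceed $\bar s$. When $x'$ is exactly above $x$, the bisector lies at distance more than $\bar s$ from ${\bf 0}$ in direction $\nu_0$, with a margin of at least half the height difference. Small tilts of $[x,x']$ then perturb the bisector continuously. I would make this quantitative through the Poincar\'e-ball description \eqref{eq:Hplus-ball} of the half-spaces, or alternatively through the hyperbolic law of cosines in the plane spanned by $\gamma_\nu$ and the segment.

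\emph{Step 3.} From the cone condition, $\Omega_t\cap H_+$ is closed under moving down inside the cone, and $E_t\cap H_+$ is closed under moving up inside it. Hence each geodesic $\gamma_y$ meets $\partial\Omega_t$ in at most one point, and the same holds for $\partial E_t$. So each boundary is a graph over its projection to $P_{\nu_0}(\bar s)$. The cone condition then gives the Lipschitz bound $\cot\alpha$ in exponential coordinates, and $\alpha$ depends on neither $t$ nor $F$.

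\emph{Main obstacle.} The hardest part is Step 2 uniformly in the base point. In hyperbolic space the cone of admissible tilts shrinks as $x$ approaches $P_{\nu_0}(\bar s)$, and it also degenerates far out toward infinity, since nearby geodesics diverge. I would first try to obtain an aperture depending on the height $r$. I would then check whether it is bounded below after rescaling in exponential coordinates, for instance by measuring slopes with respect to $dr$ and the induced metric on $P_{\nu_0}(\bar s)$. If it is not bounded below, I would settle for a locally uniform Lipschitz bound that is still independent of $t$ and $F$, which the statement permits.
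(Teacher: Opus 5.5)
Your plan is essentially the paper's proof. Admissibility of every $\tilde s>\bar s$ in every direction at time $t$ (Theorem~\ref{thm: admissible for all t}) gives, at each point of $H_+(\nu_0,\bar s)$, an open cone of monotone directions about the normal to $P_{\nu_0}(\bar s)$. The paper builds this cone from the normal geodesics to $P_\nu(\bar s)$ with $\nu$ near $\nu_0$, via Proposition~\ref{prop: monotonicity}; it passes to $\partial E_t$ and $\partial\Omega_t$ with approximating sequences, and turns the angle bound into a slope bound by a Saccheri/CAT$(0)$ comparison. As you anticipated, the resulting bound is only locally uniform (the aperture can degenerate as one approaches $P_{\nu_0}(\bar s)$), but it is independent of $t$ and $F$.

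The uniformity in Step~2 is easier than you fear. The bisector of $[x,\exp_x(\ell w)]$ is ultraparallel to, and lies beyond, the hyperplane $\exp_x(w^\perp)$. So it suffices that $\exp_x(w^\perp)$ has $\mathbf 0$ on its $-w$ side at distance $>\bar s$. This is an open condition in $w$, and it holds for the vertical $w$ because that hyperplane is at distance at least $\bar s+r$ from $\mathbf 0$.
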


\begin{proof}
We begin by proving the first assertion, namely the graphicality of $\partial E_t$.
Let \(p_1,p_2\in\partial E_t\cap H_+(\nu_0,\bar{s})\). There exist \(y_i\in P_{\nu_0}(\bar{s})\) and \(d_i>0\) such that
\[
p_i=\exp_{y_i}(d_i\,N(y_i)),\qquad i=1,2,
\]
where \(N(y_i)\) is the unit normal to \(P_{\nu_0}(\bar{s})\) at $y_i$ pointing into \(H_+(\nu_0,\bar{s})\). Without loss of generality we assume \(0<d_1\le d_2\). 
Note that if $d_1=d_2$, then the result is trivial since the points $p_1$ and $p_2$ are contained in the same equidistant. Therefore we will assume that  $d_1<d_2$.

Let \(\mathcal E_{\nu_0}(d_1)\) denote the equidistant hypersurface at distance \(d_1\) from the totally geodesic hyperplane \(P_{\nu_0}(\bar{s})\) and passing through $p_1$.
We can then define
\[
\tilde{p}_2:=\exp_{y_2}(d_1\,N(y_2))\in \mathcal E_{\nu_0}(d_1).
\]

Observe that, by construction, the four points $\{y_1,y_2, p_1, \tilde p_2\}$ form a Saccheri quadrilateral of base $b := d_{\mathbb{H}^{n+1}}(y_1,y_2) $ and leg $l:= d_1$ (see Figure~\ref{fig:Equid}). Then, the summit $S := d_{\mathbb{H}^{n+1}}(p_1,\tilde{p}_2)$ can be computed explicitly as 
\begin{equation}\label{Saccheri}
\cosh d_{\mathbb{H}^{n+1}}(p_1,\tilde{p}_2) = \cosh d_{\mathbb{H}^{n+1}}(y_1,y_2) \cosh ^2 d_1 - \sinh ^2 d_1 .
\end{equation}

\begin{figure}[!ht]
    \centering

\tikzset{every picture/.style={line width=0.5pt}} 

\begin{tikzpicture}[x=0.65pt,y=0.65pt,yscale=-1,xscale=1]

\draw   (139,198) .. controls (139,94.17) and (223.17,10) .. (327,10) .. controls (430.83,10) and (515,94.17) .. (515,198) .. controls (515,301.83) and (430.83,386) .. (327,386) .. controls (223.17,386) and (139,301.83) .. (139,198) -- cycle ;
\draw    (139,198) -- (515,198) ;
\draw [color={rgb, 255:red, 208; green, 2; blue, 27 }  ,draw opacity=1 ]   (139,198) .. controls (192,118) and (460,112) .. (515,198) ;
\draw  [dash pattern={on 0.84pt off 2.51pt}]  (320,11) -- (321,198) ;
\draw    (379,199) .. controls (377,153) and (394,73) .. (443,50) ;
\draw    (320,135) .. controls (354,120) and (368,114) .. (410,78) ;
\draw [line width=0.75]    (321,198) -- (321,173) ;
\draw [shift={(321,171)}, rotate = 90] [color={rgb, 255:red, 0; green, 0; blue, 0 }  ][line width=0.75]    (10.93,-3.29) .. controls (6.95,-1.4) and (3.31,-0.3) .. (0,0) .. controls (3.31,0.3) and (6.95,1.4) .. (10.93,3.29)   ;
\draw   (372.24,198.18) -- (372,186) -- (378.76,186.82) ;
\draw    (410,78) -- (393.05,105.3) ;
\draw [shift={(392,107)}, rotate = 301.83] [color={rgb, 255:red, 0; green, 0; blue, 0 }  ][line width=0.75]    (10.93,-3.29) .. controls (6.95,-1.4) and (3.31,-0.3) .. (0,0) .. controls (3.31,0.3) and (6.95,1.4) .. (10.93,3.29)   ;
\draw  [dash pattern={on 0.84pt off 2.51pt}]  (156,123) -- (498,279) ;
\draw [line width=0.75]    (321,198) -- (333.13,172.8) ;
\draw [shift={(334,171)}, rotate = 115.71] [color={rgb, 255:red, 0; green, 0; blue, 0 }  ][line width=0.75]    (10.93,-3.29) .. controls (6.95,-1.4) and (3.31,-0.3) .. (0,0) .. controls (3.31,0.3) and (6.95,1.4) .. (10.93,3.29)   ;
\draw  [dash pattern={on 0.84pt off 2.51pt}]  (339,206) .. controls (357,151) and (364,133) .. (410,78) ;
\draw    (410,78) -- (384.44,102.61) ;
\draw [shift={(383,104)}, rotate = 316.08] [color={rgb, 255:red, 0; green, 0; blue, 0 }  ][line width=0.75]    (10.93,-3.29) .. controls (6.95,-1.4) and (3.31,-0.3) .. (0,0) .. controls (3.31,0.3) and (6.95,1.4) .. (10.93,3.29)   ;
\draw   (341.7,199.75) -- (352.96,202.85) -- (349,212) ;
\draw    (355,129) -- (354,142) ;
\draw    (417,139) -- (416,152) ;
\draw [color={rgb, 255:red, 74; green, 144; blue, 226 }  ,draw opacity=1 ][line width=1.5]    (355,137) .. controls (380,138) and (367,138) .. (416,144) ;

\draw (301,115) node [anchor=north west][inner sep=0.75pt]   [align=left] {$\displaystyle p_{1}$};
\draw (414,72) node [anchor=north west][inner sep=0.75pt]   [align=left] {$\displaystyle p_{2}$};
\draw (310,201) node [anchor=north west][inner sep=0.75pt]   [align=left] {$\displaystyle y_{1}$};
\draw (373,201) node [anchor=north west][inner sep=0.75pt]   [align=left] {$\displaystyle y_{2}$};
\draw (386,140) node [anchor=north west][inner sep=0.75pt]   [align=left] {$\displaystyle \tilde{p}_{2}$};
\draw (299,167) node [anchor=north west][inner sep=0.75pt]   [align=left] {$\displaystyle \nu _{0}$};
\draw (403,95.5) node [anchor=north west][inner sep=0.75pt]   [align=left] {$\displaystyle \xi ( 0)$};
\draw (336,174) node [anchor=north west][inner sep=0.75pt]   [align=left] {$\displaystyle \nu $};
\draw (368,74.5) node [anchor=north west][inner sep=0.75pt]   [align=left] {$\displaystyle \xi ( \nu )$};
\draw (429,128) node [anchor=north west][inner sep=0.75pt]   [align=left] {$\displaystyle \textcolor[rgb]{0.29,0.56,0.89}{V}$};

\end{tikzpicture}
    \caption{Comparison on the equidistant hypersurface \(\mathcal E_{\nu_0}(d_1)\). }
    \label{fig:Equid}
\end{figure}

Since $p_1, p_2\in H_+(\nu_0,\bar{s})$, which is open, there exists \(\epsilon>0\) (depending on $p_1, p_2$) such that, for all \(\nu\) in a small
neighborhood \(B_0(\nu_0,\epsilon)\subset\mathbb{S}^n\), we have
\[
p_i\in H_+(\nu,\bar{s}), \,
V(\tilde{p}_2,\epsilon):=
B(\tilde{p}_2,\epsilon)\cap \mathcal E_{\nu_0}(d_1)\subset H_+(\nu,\bar{s}).
\]

Let $[p_1, \tilde p _2]_{\mathcal E (d_1)}$ denote the (closed) {\it equidistant arc} joining $p_1$ and $\tilde p _2$ defined by 
$$[p_1, \tilde p _2]_{\mathcal E (d_1)} := \{ \exp_{y}(d_1\,N(y))\in \mathcal E_{\nu_0}(d_1) \, : \quad y \in [y_1 ,y_2]\}$$
For each \(\nu\in B_0(\nu_0,\epsilon)\), let \(\beta_\nu :[0, \infty) \to \mathbb{H}^{n+1}\) be the geodesic ray joining \(\beta_\nu (0) :=p_2\) to its closest point $y(\nu)$ in \(P_\nu(\bar{s})\). This is well-defined since $d_1 < d_2$.
Define
\[
\sigma(\nu):=\beta_\nu\cap \mathcal E_{\nu_0}(d_1),\qquad
\xi(\nu):=\beta_\nu'(0)\in T_{p_2}\mathbb{H}^{n+1}.
\]
Let
\([)
\theta(\nu):=\angle\big(\xi(\nu),\xi(\nu_0)\big)
\). Since $\sigma$ is continuous, there exists \(\delta>0\) such that
\[
\theta(\nu)\leq\delta\quad\Longrightarrow\quad \sigma(\nu)\in V(\tilde{p}_2,\epsilon).
\]
Note that $\delta$ and $\varepsilon$ depend only on $p_1$ and $p_2$. In particular, if $p_1$ and $p_2$ range over a fixed compact subset of $H^+(\nu_0, \bar s)$, then $\delta$ and $\varepsilon$ depend only on that compact set.

Consider the geodesic triangle $(p_2,p_1,\tilde{p}_2)_{\mathbb{H}^{n+1}}$, including the degenerate one, that is, if $p_1$ coincides with $\tilde p_2$, and let $\alpha_{\mathbb{H}^{n+1}}$ denote the internal angle of the triangle $(p_2,p_1,\tilde{p}_2)_{\mathbb{H}^{n+1}}$ at the vertex $p_2$. Observe that in the degenerate case $\alpha_{\mathbb{H}^{n+1}}=0$. 

First, we will show that the degenerate case is not possible, since $\alpha_{\mathbb{H}^{n+1}}$ is bounded below by $\delta$:

\begin{quote}
    \textbf{{Claim A}:} $\alpha_{\mathbb{H}^{n+1}}\geq \delta.$
\end{quote}
\begin{proof}[Proof of Claim A]
Assume $\alpha_{\mathbb{H}^{n+1}}<\delta$, then there exist $\bar\nu\in B_0(\nu_0,\epsilon)$ so that $\alpha_{\mathbb{H}^{n+1}}=\theta(\bar\nu)$ and $\sigma(\bar\nu)=p_1\in V(\tilde{p}_2,\epsilon),$ or equivalently there exist $\bar\nu\in B_0(\nu_0,\epsilon)$ and $\bar y\in P_\nu(\bar{s})$ so that 
\begin{equation*}
    p_1:=\exp_{\bar y}(r_1N(\bar y))\hspace{1 cm}\text{and}\hspace{1 cm}p_2=\exp_{\bar y}(r_2N(\bar y))
\end{equation*}
for some $r_1<r_2$.
 This follows from the fact that $\beta _{\bar \nu}$ is orthogonal to $\mathcal E_{\nu_0}(d_1)$ since $\bar y \in \mathcal E_{\nu_0}(d_1)$ is the   closest point to $p_2$.

Now, take a sequence $\{p_1^k\}\in E_t$, so that $p_1^k\to p_1$, as $k\to+\infty$, and note that $u_t(p_1^k)>0$ for all $k$. Thus, for $k$ big enough, we could use the same argument, that is, there exists $\nu_k\in B_0(\nu_0,\tilde{\epsilon})$ and $y_k \in P_{\nu_k}(\bar{s})$ so that
\begin{equation*}
    p_1^k:=\exp_{y_k}(r_1^kN(y_k))\:\:\text{and}\:\: p_2:=\exp_{y_k}(r_2^kN(y_k))
\end{equation*}
for some $r_1^k<r_2^k$. Finally, using Proposition \ref{prop: monotonicity}, we have that
\begin{equation*}
    0<u_t(p_1^k)\leq u_t(p_2)=0,
\end{equation*}
since $p_2\in \partial E_t$, which implies $u_t(p_2)=0$. This is a contradiction, and therefore Claim A holds.
\end{proof}

Next, we show that:
\begin{quote}
    \textbf{{Claim B}:} $|d_2-d_1|\leq \cot (\alpha_{\mathbb{H}^{n+1}}) G(y_1,y_2,d_1)$ where
    $$G(y_1,y_2,d_1) := \arccosh \left( \cosh d_{\mathbb{H}^{n+1}}(y_1,y_2) \cosh ^2 d_1 - \sinh ^2 d_1  \right).$$
\end{quote}
\begin{proof}[Proof of Claim B]
Let $(a,b,c)_{\mathbb{R}^{n+1}}$ be the Euclidean comparison triangle with the same side lengths as the geodesic triangle $(p_2,p_1,\tilde{p}_2)_{\mathbb{H}^{n+1}}$, with $a,b,c$ corresponding to $p_2,p_1,\tilde p_2$ respectively. Let $\alpha_{\mathbb R^{n+1}}, \gamma_{\mathbb R^{n+1}}, \beta_{\mathbb R^{n+1}}$ denote the Euclidean angles at $a, b, c$, and let $\alpha_{\mathbb H^{n+1}}, \beta_{\mathbb H^{n+1}}$ denote the hyperbolic angles at $p_2$ and $\tilde p_2$, respectively (so $\alpha_{\mathbb H^{n+1}}$ is the angle whose lower bound was established in Claim~A).

\smallskip

\noindent\emph{Step 1: $\beta_{\mathbb H^{n+1}}\ge\pi/2$.} The side $\tilde p_2 p_2$ is the prolongation of the leg $y_2\tilde p_2$ of the Saccheri quadrilateral $\{y_1,y_2,p_1,\tilde p_2\}$, because $p_2=\exp_{y_2}(d_2\,N(y_2))$ and $\tilde p_2=\exp_{y_2}(d_1\,N(y_2))$ lie on the same normal geodesic to $P_{\nu_0}(\bar s)$ with $d_1<d_2$. Hence $\beta_{\mathbb H^{n+1}}$ is the supplement (at $\tilde p_2$) of the Saccheri summit angle $\theta_S:=\angle(\tilde p_2 y_2,\tilde p_2 p_1)$. By Gauss--Bonnet applied to the Saccheri quadrilateral in $\H^{n+1}$, the sum of its four interior angles is strictly less than $2\pi$: with two right angles at the base vertices $y_1,y_2$ and equal summit angles $\theta_S$ at $p_1$ and $\tilde p_2$, this gives $\pi+2\theta_S<2\pi$, hence $\theta_S<\pi/2$. Therefore $\beta_{\mathbb H^{n+1}}=\pi-\theta_S>\pi/2$.

\smallskip

\noindent\emph{Step 2: $\beta_{\mathbb R^{n+1}}\ge\pi/2$.} The Reshetnyak (CAT($0$)) angle-comparison theorem applies vertex-by-vertex: in a CAT$(0)$ space (in particular in the CAT$(-1)$ space $\mathbb H^{n+1}$), the angle at each vertex of a geodesic triangle is at most the angle at the corresponding vertex of the Euclidean comparison triangle (\cite[Proposition~II.1.7(4)]{BridsonHaefliger}). Applying this at vertex $c=\tilde p_2$ of the single comparison triangle $(a,b,c)_{\mathbb R^{n+1}}$ yields $\beta_{\mathbb R^{n+1}}\ge\beta_{\mathbb H^{n+1}}\ge\pi/2$. Applying it at vertex $a=p_2$ of the same triangle yields $\alpha_{\mathbb R^{n+1}}\ge\alpha_{\mathbb H^{n+1}}$. Both inequalities thus refer to the same Euclidean comparison triangle.

\smallskip

\noindent\emph{Step 3: Euclidean trigonometry.} Since $\beta_{\mathbb R^{n+1}}\ge\pi/2$, the remaining Euclidean angles satisfy $\alpha_{\mathbb R^{n+1}}+\gamma_{\mathbb R^{n+1}}\le\pi/2$, so $\gamma_{\mathbb R^{n+1}}\le\pi/2-\alpha_{\mathbb R^{n+1}}$ and hence $\sin\gamma_{\mathbb R^{n+1}}\le\cos\alpha_{\mathbb R^{n+1}}$. By the Euclidean law of sines,
\[
\frac{|\overline{bc}|}{|\overline{ac}|}=\frac{\sin\alpha_{\mathbb R^{n+1}}}{\sin\gamma_{\mathbb R^{n+1}}}\;\ge\;\frac{\sin\alpha_{\mathbb R^{n+1}}}{\cos\alpha_{\mathbb R^{n+1}}}=\tan\alpha_{\mathbb R^{n+1}}\;\ge\;\tan\alpha_{\mathbb H^{n+1}},
\]
where the last inequality uses Step~2 and the monotonicity of $\tan$ on $[0,\pi/2)$ together with $\alpha_{\mathbb H^{n+1}}<\pi/2$, which follows from $\alpha_{\mathbb H^{n+1}}<\pi-\beta_{\mathbb H^{n+1}}\le\pi/2$ (as the three angles of the hyperbolic triangle sum to at most $\pi$).

\smallskip

\noindent\emph{Step 4: Conclusion.} Identifying $|\overline{bc}|=d_{\mathbb H^{n+1}}(p_1,\tilde p_2)$ and $|\overline{ac}|=d_{\mathbb H^{n+1}}(p_2,\tilde p_2)=|d_2-d_1|$ (the side lengths of the Euclidean comparison triangle agree with the hyperbolic ones by construction), and using~\eqref{Saccheri} to identify $d_{\mathbb H^{n+1}}(p_1,\tilde p_2)=G(y_1,y_2,d_1)$, we conclude $|d_2-d_1|\le\cot\alpha_{\mathbb H^{n+1}}\cdot G(y_1,y_2,d_1)$.
\end{proof}

Hence, by Claims A and B, one concludes that  \(\partial E_t\cap H_+(\nu_0,\bar{s})\) is a graph over \(P_{\nu_0}(\bar{s})\) with locally bounded slope. 
Since the argument is
purely geometric and only uses admissibility (which is preserved by Theorem~\ref{thm: admissible for all t}), the resulting
Lipschitz constant is locally uniform. In particular it  depends only on \(\Sigma_0\) and \(\bar{s}\), and not on \(t\) and \(F\).

The graphicality of \(\partial \Omega_t\cap H_+(\nu_0,\bar{s})\) is treated similarly, with the only difference that we should take instead an approximating sequence of points $p_2^k\in \Omega$ converging to $p_2$. 
\end{proof}

We will refine the Lipschitz estimate by exploiting the hyperbolic geometry more explicitly. In particular, one combines the description of \(H_+(\nu,\bar{s})\) in the Poincar\'e ball model with the radial parametrization of a hypersurface  in exponential coordinates and Gauss lemma to obtain a pointwise gradient estimate that depends only on \(\bar{s}\) and \(r_0\). This estimate will be used repeatedly in the sequel.

\begin{theorem}\label{thm: gradient bound}
Let $u$ be a solution to the level set flow given by \eqref{eq:levelset} and  let \(\bar{s}\) be the
overall optimal admissible value for \((\Sigma_0, \Omega_0, E_0)\). Then, the part of $\partial \Omega_t$ (and similarly $\partial E_t$) lying outside the geodesic ball $B({\bf 0},\bar{s})$ is a radial graph in exponential coordinates, that is,
    \begin{equation}
    \Phi_t(\nu):=\exp_0(r_t(\nu)\nu),\:\:\:\: \nu\in \S^n
    \end{equation}
    for a Lipschitz function $r_t : \mathbb{S}^n \to \mathbb{R}$, which, where differentiable, satisfies the gradient  estimate
    \begin{equation}\label{eq: grad estimate}
        |\nabla_{\mathbb{S}^{n}} r_t|\leq \frac{r_t\tanh\bar{s}}{\sqrt{\tanh^2(r_t)-\tanh^2(\bar{s})}}.
    \end{equation}
    
\end{theorem}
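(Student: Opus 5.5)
Fix $t\in(0,T)$ and a point $p=\exp_{\bf 0}(r\nu)\in\partial\Omega_t$ with $r>\bar s$. Our approach has two parts: first we show that every radial geodesic leaving the ball meets $\partial\Omega_t$ in at most one point; then we read off the slope bound from the reflection hyperplanes that pass close to $p$.

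\textbf{Step 1 (radial graphicality).} By Theorem~\ref{thm: admissible for all t}, every $\tilde s\ge\bar s$ is admissible for $u_t$ in every direction $\mu\in\S^n$. For two points $x=\exp_{\bf 0}(a\nu)$ and $x'=\exp_{\bf 0}(b\nu)$ with $\bar s<a<b$, the perpendicular bisector of $[x,x']$ is $P_\nu((a+b)/2)$, and $(a+b)/2>\bar s$. Exactly as in the proof of Proposition~\ref{prop: monotonicity}, this gives $u_t(x)\le u_t(x')$. So $r\mapsto u_t(\exp_{\bf 0}(r\nu))$ is non-decreasing on $(\bar s,\infty)$. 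Combined with the approximation argument of Claim~A (approximating by points of $\Omega_t$ or of $E_t$), this shows the ray meets $\partial\Omega_t$ in at most one point. Compactness of $\Sigma_t$ together with $\Omega_t\ne\emptyset$ then gives existence of that point. This defines $r_t(\nu)$. Its Lipschitz continuity follows from the local estimate of Theorem~\ref{thm: Lipschitz bound} applied with $\nu_0=\nu$, since the radial ray through $p$ is the normal geodesic to $P_\nu(\bar s)$ passing through $\gamma_\nu(\bar s)$.

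\textbf{Step 2 (cone condition).} For each $\mu\in\S^n$ with $p\in H_+(\mu,\bar s)$, the same monotonicity holds along the geodesic through $p$ orthogonal to $P_\mu(\bar s)$, by Proposition~\ref{prop: monotonicity} transported to $u_t$. Consequently, near $p$ the set $\Omega_t$ contains the backward segments toward $P_\mu(\bar s)$ and $E_t$ contains the forward segments. Let $\xi(\mu)\in T_p\H^{n+1}$ be the unit tangent at $p$ of the geodesic from $P_\mu(\bar s)$ to $p$. Then $\partial\Omega_t$ has an exterior cone at $p$ spanned by the directions $\{\xi(\mu): p\in H_+(\mu,\bar s)\}$. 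Hence, at a point of differentiability, the outward normal $N$ satisfies $\langle N,\xi(\mu)\rangle\ge0$ for all such $\mu$. We now parametrize this cone. By \eqref{eq:Hplus-ball} with $|x|=\rho=\tanh(r/2)$, the condition $p\in H_+(\mu,\bar s)$ reads $\cos\angle(\mu,\nu)>\tanh\bar s\,(1+\rho^2)/(2\rho)=\tanh\bar s/\tanh r$. So every $\mu$ within angle $\theta_{\max}$ of $\nu$ is allowed, where $\cos\theta_{\max}=\tanh\bar s/\tanh r$.

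\textbf{Step 3 (angle computation).} We need the angle $\psi(\mu)$ between $\xi(\mu)$ and the radial direction $\partial_r$ at $p$. Take $\mu$ in the plane of $\nu$ and $\partial_r$ at the boundary value $\theta_{\max}$, where $p$ lies on $P_\mu(\bar s)$. There $\xi(\mu)$ is the normal to $P_\mu(\bar s)$, and $\psi$ is given by the hyperbolic right-triangle relation with vertices ${\bf 0}$, $\gamma_\mu(\bar s)$, $p$: one leg is $\bar s$ and the hypotenuse is $r$. This gives $\sin\psi_{\max}=\sinh\bar s/\sinh r$, the complementary angle to the one at $p$ in that triangle, with $\cos$ of the angle at $p$ equal to $\tanh\bar s/\tanh r$. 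Write $N$ using the Gauss lemma as proportional to $\partial_r-\sinh^{-2}(r)\nabla_{\S^n}r_t$. Then $|N\cdot\partial_r|$ relative to its tangential part equals $\sinh r/|\nabla r_t|$. The cone condition then forces the tangential tilt of $N$ to be at most $\cot$ of the relevant angle, that is, $|\nabla_{\S^n}r_t|\le\sinh r\cdot\tan(\cdot)$. We then express this angle through $\tanh\bar s$ and $\tanh r$, obtaining something of the form $\tanh\bar s/\sqrt{\tanh^2r-\tanh^2\bar s}$ times a metric factor.

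\textbf{Main obstacle.} The hardest step is this final trigonometric bookkeeping. We must correctly identify the extremal $\xi(\mu)$, decide whether the limiting configuration is given by the cone boundary or by an interior $\mu$, and convert the $\S^n$-gradient factor into the stated prefactor $r_t$ in \eqref{eq: grad estimate}. We would first carry out the computation explicitly in the $2$-dimensional plane containing $\nu$ and the gradient direction, using the Poincar\'e disk formulas for $P_\mu(\bar s)$ and the reflection center $c_\mu(\bar s)$. We expect the $r_t$ factor to come from the Euclidean-type exponential-coordinate normalization of $\nabla_{\S^n}$, and we would check this against the limiting case $\bar s=0$, where the bound should give $0$.
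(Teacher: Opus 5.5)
Your Steps 1--2 are sound. The gap is in Step 3, which is exactly where the estimate has to be produced, and the one relation you commit to there is wrong.

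Consider the right triangle with vertices $\mathbf 0$, $\gamma_\mu(\bar s)$, $p$. It has its right angle at $\gamma_\mu(\bar s)$, one leg equal to $\bar s$, and hypotenuse $r$. In it, $\tanh\bar s/\tanh r$ is the cosine of the angle at $\mathbf 0$, i.e.\ $\cos\theta_{\max}$. The angle $\beta$ at $p$ instead satisfies $\sin\beta=\sinh\bar s/\sinh r$. Since $\xi(\mu)$ is normal to $P_\mu(\bar s)$ at $p$, and the side $[\gamma_\mu(\bar s),p]$ lies in $P_\mu(\bar s)$, you get $\psi_{\max}=\pi/2-\beta$. Hence $\cos\psi_{\max}=\sinh\bar s/\sinh r$, not $\sin\psi_{\max}$. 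With your identification the cone condition would give $|\nabla_{\S^n}r_t|\le\sinh r\sqrt{\sinh^2 r-\sinh^2\bar s}/\sinh\bar s$, which blows up as $\bar s\to 0$.

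You also do not need to settle interior versus boundary $\mu$ to get an upper bound. Let $\phi$ be the angle between $N$ and $\partial_r$. The single circle $\{\angle(\mu,\nu)=\theta\}$ already forces $\phi+\psi(\theta)\le\pi/2$, so letting $\theta\uparrow\theta_{\max}$ is legitimate. In fact $\tan\psi(\theta)=\sin\theta/(\cosh r\cos\theta-\tanh\bar s\sinh r)$ is increasing, so the boundary is also extremal.

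Carried out correctly, your argument gives $\phi\le\beta$, i.e.
\[
|\nabla_{\S^n}r_t|\;\le\;\sinh r_t\,\tan\beta\;=\;\frac{\sinh r_t\,\sinh\bar s}{\sqrt{\sinh^2 r_t-\sinh^2\bar s}}\;=\;\frac{\tanh r_t\,\tanh\bar s}{\sqrt{\tanh^2 r_t-\tanh^2\bar s}}.
\]
So the intrinsic prefactor is $\tanh r_t$, and no renormalization of $\nabla_{\S^n}$ turns it into $r_t$. The stated bound \eqref{eq: grad estimate} follows from this sharper one only through the elementary inequality $\tanh r_t\le r_t$; that is the step your plan is missing.

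This also explains the paper's constant. The paper imposes the cone condition not at $p_0$ but in $T_{\mathbf 0}\H^{n+1}$: it requires $\langle x/|x|,\nu_0\rangle\le\tanh\bar s/\tanh r_0$ for the lifted vector $x=d(r_t)_{\nu_0}(e)\nu_0+r_0e$, with the admissible $\nu$ described by its Claim A. There $r_0$ is just the flat length of $r_0e$. Your intrinsic route, once corrected, yields a sharper inequality that implies the paper's.

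Finally, the existence claim closing Step 1 is false in general. Take $\Omega_0$ a small ball centred at distance $D$ from $\mathbf 0$: then $\bar s=D$, and most rays leaving $B(\mathbf 0,\bar s)$ miss $\Omega_t$. So $r_t$ is only defined on the directions whose rays meet $\partial\Omega_t$ outside the ball, which is also all the paper's local argument provides.
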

\begin{proof} We prove the statement for $\partial \Omega_t$; the proof for $\partial E_t$ is identical.
     Let $p_0\in \partial\Omega_t$ be a point so that $r_0:= d_{\mathbb{H}^{n+1}}(p_0,{\bf 0})>\bar{s}$. Let $\nu_0\in \S^n \subset T_0 \H^{n+1}$ be so that $\gamma_{\nu_0}(0)={\bf 0}$ and $\gamma_{\nu_0}(r_0)=p_0$.\\

\begin{quote}
   {\bf Claim A}: $p_0\in H^+(\nu,\bar{s})$ for any $\nu\in \S^n\subset T_0 \H^{n+1}$ satisfying $\langle \nu,\nu_0\rangle >\frac{\tanh(\bar{s})}{\tanh (r_0)}$. In particular, $p_0\in P_\nu(s)$ for some $s>\bar{s}$. 
\end{quote}   
\begin{proof}[Proof of the Claim A]
   By $\langle \nu,\nu_0\rangle >\frac{\tanh(\bar{s})}{\tanh (r_0)}$ and \eqref{rho} we have
   \begin{align*}
       \langle p_0,\nu\rangle&=\tanh\big(\frac{r_0}{2}\big)\langle \nu_0,\nu\rangle >\tanh(\frac{r_0}{2})\frac{\tanh(\bar{s})}{\tanh(r_0)}\\
       &=\frac{\tanh(\bar{s})\big(1+\tanh^2(\frac{r_0}{2})\big)}{2}=\frac{\tanh(\bar{s})(1+|p_0|^2)}{2}
   \end{align*}

Hence, \eqref{eq:Hplus-ball} implies $p_0\in H_+(\nu,\bar{s})$, which proves Claim A. 
\end{proof}

Now, since $\bar{s}\geq s_0(\nu_0)$, by Theorem~\ref{thm: Lipschitz bound} there exists a neighborhood $\mathcal{V}\subset \partial \Omega_t$ of $p_0$ which is a graph over a neighborhood $D \subset P_{\nu_0}(\bar{s})$. Hence, shrinking $\mathcal{V}$ if necessary, we can assume that $\mathcal{V}$ is a radial graph over a neighborhood $U(\nu_0) \subset \S^n$ in exponential coordinates, that is, 
    \begin{equation*}
        \mathcal{V}:=\{\exp_0(r_t(\nu)\nu):\nu\in U(\nu_0)\}
    \end{equation*}
    for some $r_t:U(\nu_0)\to\R$ such that $r_t(\nu_0)=r_0$. So we can parametrize $\Phi_t:U(\nu_0)\to \mathcal{V}$ where $\Phi_t(\nu):=\exp_0(r_t(\nu)\nu)$ and $\Phi_t(\nu_0)=p_0.$\\
    
Next, we obtain a gradient estimate for $r_t$, assuming that it is differentiable around $\nu_0$. Let $e\in T_{\nu_0}\S^n$, $\S^n\subset T_0\H^{n+1}$, be a unitary vector and $\nu:(-\epsilon,\epsilon)\to \S^n$ be a curve so that $\nu(0)=\nu_0$ and $\nu'(0)=e.$ Set 
\[
\tau:=\frac{d}{ds}_{|_{s=0}}\Phi_t(\nu(s))= d(\exp_0)_{r_0\nu_0}(x)\in T_{p_0}\partial \Omega_t,
\]
where $x:=d(r_t)_{\nu_0}(e)\nu_0+r_0e\in T_0\H^{n+1}$, see Figure \ref{fig:gradient}. By Claim A and Theorem \ref{thm: Lipschitz bound}, we can write $\partial \Omega_t$, locally around $p_0$, as a graph over $P_\nu(\bar{s})$ for any $\nu\in \S^n\subset T_0\H^{n+1}$ satisfying $\langle \nu,\nu_0\rangle > \frac{\tanh(\bar{s})}{\tanh(r_0)}$. Since $x/|x|$ is parallel to the tangent direction $\tau$, we must have
    \begin{equation*}
         \Big\langle\frac{x}{|x|},\nu_0\Big\rangle\leq \frac{\tanh(\bar{s})}{\tanh(r_0)}.
    \end{equation*}

 Now using Gauss lemma, we obtain
    \begin{align*}
      \Big\langle \frac{\tau}{|\tau|},\gamma_{\nu_0}'(r_0)\Big\rangle&=\Big\langle d(\exp_0)_{r_0\nu_0}\big(\frac{x}{|x|}\big),d(\exp_0)_{r_0\nu_0}(r_0\nu_0)\Big\rangle \\
       &=\Big \langle\frac{x}{|x|},\nu_0\Big\rangle =\frac{d(r_t)_{\nu_0}(e)}{\sqrt{|d(r_t)_{\nu_0}(e)|^2+r_0^2}}.
    \end{align*}
    Combining last two equations, we get the desired estimate.

\begin{figure}[!ht]
\centering

\tikzset{every picture/.style={line width=0.5pt}} 

\begin{tikzpicture}[x=0.5pt,y=0.5pt,yscale=-1,xscale=1]

\draw   (178.5,152.25) .. controls (178.5,75.34) and (240.84,13) .. (317.75,13) .. controls (394.66,13) and (457,75.34) .. (457,152.25) .. controls (457,229.16) and (394.66,291.5) .. (317.75,291.5) .. controls (240.84,291.5) and (178.5,229.16) .. (178.5,152.25) -- cycle ;
\draw    (412,51) -- (222.13,253.88) ;
\draw    (214,59) .. controls (263,119) and (370,118) .. (426,65) ;
\draw [color={rgb, 255:red, 74; green, 144; blue, 226 }  ,draw opacity=1 ]   (312,14) -- (314,156) ;
\draw  [color={rgb, 255:red, 208; green, 2; blue, 27 }  ,draw opacity=1 ] (262.41,156) .. controls (262.41,127.51) and (285.51,104.41) .. (314,104.41) .. controls (342.49,104.41) and (365.59,127.51) .. (365.59,156) .. controls (365.59,184.49) and (342.49,207.59) .. (314,207.59) .. controls (285.51,207.59) and (262.41,184.49) .. (262.41,156) -- cycle ;
\draw   (268,83) .. controls (291,93) and (262,79) .. (277,66) .. controls (292,53) and (337,95) .. (358,79) .. controls (379,63) and (360,129) .. (380,159) .. controls (400,189) and (257,189) .. (237,159) .. controls (217,129) and (245,73) .. (268,83) -- cycle ;
\draw [color={rgb, 255:red, 74; green, 144; blue, 226 }  ,draw opacity=1 ]   (292,14) .. controls (307,108) and (380,159) .. (456,142) ;
\draw [line width=1.5]    (314,104.41) -- (337,104.05) ;
\draw [shift={(340,104)}, rotate = 179.09] [color={rgb, 255:red, 0; green, 0; blue, 0 }  ][line width=1.5]    (14.21,-4.28) .. controls (9.04,-1.82) and (4.3,-0.39) .. (0,0) .. controls (4.3,0.39) and (9.04,1.82) .. (14.21,4.28)   ;
\draw    (313,72) -- (338.22,85.08) ;
\draw [shift={(340,86)}, rotate = 207.41] [color={rgb, 255:red, 0; green, 0; blue, 0 }  ][line width=0.75]    (10.93,-3.29) .. controls (6.95,-1.4) and (3.31,-0.3) .. (0,0) .. controls (3.31,0.3) and (6.95,1.4) .. (10.93,3.29)   ;

\draw (431,50) node [anchor=north west][inner sep=0.75pt]   [align=left] {$\displaystyle P_{\nu _{0}}(\overline{s})$};
\draw (310,161) node [anchor=north west][inner sep=0.75pt]   [align=left] {0};
\draw (314,52) node [anchor=north west][inner sep=0.75pt]   [align=left] {$\displaystyle p_{0}$};
\draw (263.33,26.33) node [anchor=north west][inner sep=0.75pt]  [color={rgb, 255:red, 74; green, 144; blue, 226 }  ,opacity=1 ] [align=left] {$\displaystyle \gamma _{\nu _{0}}$};
\draw (255,216) node [anchor=north west][inner sep=0.75pt]   [align=left] {$\displaystyle \gamma _{\nu }$};
\draw (187,104) node [anchor=north west][inner sep=0.75pt]   [align=left] {$\displaystyle \partial\Omega
 _{t}$};
\draw (323,212) node [anchor=north west][inner sep=0.75pt]   [align=left] {$\displaystyle \textcolor[rgb]{0.82,0.01,0.11}{B}\textcolor[rgb]{0.82,0.01,0.11}{_{0}}\textcolor[rgb]{0.82,0.01,0.11}{(}\textcolor[rgb]{0.82,0.01,0.11}{\overline{s}}\textcolor[rgb]{0.82,0.01,0.11}{)}$};
\draw (406,145) node [anchor=north west][inner sep=0.75pt]   [align=left] {$\displaystyle \textcolor[rgb]{0.29,0.56,0.89}{P}\textcolor[rgb]{0.29,0.56,0.89}{_{\nu }}\textcolor[rgb]{0.29,0.56,0.89}{(}\textcolor[rgb]{0.29,0.56,0.89}{\overline{s}}\textcolor[rgb]{0.29,0.56,0.89}{)}$};
\draw (316,107.41) node [anchor=north west][inner sep=0.75pt]  [font=\scriptsize] [align=left] {$\displaystyle e$};
\draw (328.5,86) node [anchor=north west][inner sep=0.75pt]  [font=\scriptsize] [align=left] {$\displaystyle \tau $};

\end{tikzpicture}
       \caption{Graphical representation of $\partial\Omega_t$ over a geodesic sphere with controlled gradient.}
       \label{fig:gradient}
\end{figure}

\end{proof}

In summary, the combination of the level-set formulation \eqref{eq:levelset} with the Aleksandrov reflection framework yields a robust package of a priori estimates for expanding curvature flows in \(\H^{n+1}\): admissibility is preserved along the flow, the evolving hypersurfaces become uniformly Lipschitz graphs in suitable coordinates.

\medskip

\subsection{Applications: Inverse Curvature Flows}\label{sec:icf}

In this section we specialize the general framework developed in Section~\ref{sec:levelset} to inverse curvature flows (ICF) of the form
\begin{equation}\label{ICF}
    \frac{\partial \phi _t}{\partial t} \;=\; -\,\frac{\eta _t}{\mathcal F},
\end{equation}
where $\eta_t$ is the \emph{inward} unit normal to $\Sigma_t$ as in Section~\ref{sec:levelset}; in the notation of~\eqref{eveq-again} this corresponds to the choice $F=-1/\mathcal F$, so that the inverse mean curvature flow ($\mathcal F=H$) is recovered as in~\eqref{eveq-again}. Here $\mathcal F$ is a curvature function of homogeneous degree $1$, monotone and concave, defined on a symmetric, convex, open cone $\Gamma\subset\R^{n+1}$, such that
\begin{equation}
    \mathcal F_{|\Gamma}>0 \qquad \text{and} \qquad \mathcal F_{|\partial\Gamma}=0.
\end{equation}
We also normalize \(\mathcal F\) by
\[
    \mathcal F(1,\dots,1)=n .
\]
These are the standard structural hypotheses in Gerhardt's theory of inverse curvature flows in hyperbolic space and include, in particular, the inverse mean curvature flow (IMCF) as the case \(\mathcal F=H\) \cite{Gerhardt2014}. Combined with the star-shapedness and gradient bounds, they allow us to upgrade viscosity solutions of \eqref{ICF} to smooth, strictly mean convex flows after a finite time. In this way we extend Harvie's eventual regularity result for weak IMCF in \(\H^{n+1}\) \cite{Harvie2024} from the speed \(\mathcal F=H\) to general inverse curvature speeds \(\mathcal{F}\), in a setting that is conceptually close to the Aleksandrov reflection framework of Chow--Gulliver in Euclidean space \cite{ChowGulliver01}.

We first look at the flow of a geodesic sphere \(\mathcal S_{r_0}\). Fix a point \(p_0 \in \mathbb{H}^{n+1}\) and consider geodesic polar coordinates centered at \(p_0\), recall \eqref{eq:polar-metric}. Geodesic spheres \(\mathcal S_r\) with center at \(p_0\) are umbilic, and their induced metric on \(\mathcal S_r\), $\bar{g}_{ij}$, and second fundamental form, $ \bar{h}_{ij}$, are given by
$$\bar{g}_{ij} \;=\; \sinh^2 r \,\sigma_{ij} \text{ and }\bar{h}_{ij} \;=\; \coth r\,\bar{g}_{ij},
$$respectively. Hence, if we consider an inverse curvature flow \eqref{ICF} with initial hypersurface \(\mathcal S_{r_0}\), then the evolving hypersurfaces \(\Sigma_t\) remain geodesic spheres with radii \(r(t)\) satisfying the scalar ODE
\begin{equation*}
    \dot{r}(t) \;=\; \frac{1}{\mathcal F(\kappa_1,\dots,\kappa_n)}
    \;=\; \frac{1}{\mathcal F(\coth r,\dots,\coth r)}
    \;=\; \frac{1}{n\coth r},
\end{equation*}
where in the last equality we used homogeneity and the normalization \(\mathcal F(1,\dots,1)=n\). Solving this ODE yields
\begin{equation}\label{sphere evolution}
    \sinh r(t) \;=\; e^{\frac{t}{n}}\sinh r_0\,.
\end{equation}

In the next theorem, we consider viscosity solutions to \eqref{ICF}  in the sense described in Section 3.

\begin{theorem}[Star-shapedness of ICF in hyperbolic space]\label{thm: Reg}
Let $u$ be the viscosity solution to \eqref{ICF} with initial condition $\Sigma_0$, and define
\begin{equation}\label{time}
    T=n\log\Bigg(\frac{\sinh(r_+)}{\sinh(r_-)}\Bigg),
\end{equation}
where $r_-$ and $r_+$ are the geodesic in-radius and out-radius of $\Omega_0$. Then for all $t\geq T$, $\partial\Omega_t$ and $\partial E_t$ become star-shaped and hence homeomorphic to $\mathbb{S}^n$.
\end{theorem}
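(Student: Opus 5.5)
We place the origin ${\bf 0}$ at the center of an inscribed ball: choose $p_0$ with $B(p_0,r_-)\subset\Omega_0\subset B(q,r_+)$, where $B(q,r_+)$ is the outer ball. The argument combines three ingredients: an upper bound on $\bar s$, sphere barriers, and the gradient estimate of Theorem~\ref{thm: gradient bound}.

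\emph{Step 1: bound $\bar s$.} Take $\Sigma_0\subset \overline{B(q,r_+)}$. For each direction $\nu$ we want $s_0(\nu)\le r_+$ measured from the center, so we recenter at ${\bf 0}=q$. If $s> r_+$, then $\Sigma_0\cap H_+(\nu,s)=\emptyset$ and $\Omega_0\cap H_+=\emptyset$. For (A2) we must check $E_0\cap H_-(\nu,s)\subset \mathcal R_{\nu,s}(E_0\cap H_+(\nu,s))$. Since $E_0\cap H_+=H_+$, this reduces to $\mathcal R_{\nu,s}(E_0\cap H_-)\subset H_+$, which holds automatically because reflection swaps the half-spaces. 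Hence $s_0(\nu)\le r_+$, i.e.\ $\bar s\le r_+$ with respect to the center $q$ of the outer ball.

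The subtlety is that the in-ball $B(p_0,r_-)$ need not be centered at $q$. To handle this, I would rather recenter at $p_0$ and bound $\bar s$ by the distance from $p_0$ to the farthest point, which can be larger than $r_+$. I expect the statement intends concentric radii (in-radius and out-radius about a common center ${\bf 0}$), so I take ${\bf 0}$ such that $B({\bf 0},r_-)\subset\Omega_0\subset B({\bf 0},r_+)$. Then Step 1 gives $\bar s\le r_+$, and by Theorem~\ref{thm: admissible for all t} the overall optimal value for $(\Sigma_t,\Omega_t,E_t)$ stays $\le r_+$ for all $t$.

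\emph{Step 2: inner barrier.} The geodesic sphere $\mathcal S_{r_-}({\bf 0})$ lies in $\overline\Omega_0$. By the comparison principle for viscosity solutions and the invariance of spheres under the flow \eqref{sphere evolution}, the evolving ball $B({\bf 0},r(t))$ with $\sinh r(t)=e^{t/n}\sinh r_-$ stays in $\overline{\Omega_t}$. Concretely, we compare $u_t$ with the level-set solution whose initial datum is the signed distance to $\mathcal S_{r_-}$, which is $\ge u_0$ after truncation. Thus $\partial\Omega_t$ and $\partial E_t$ lie outside $B({\bf 0},r(t))$. For $t\ge T$ we get $\sinh r(t)\ge \sinh r_+$, hence $r(t)\ge r_+\ge\bar s$. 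The case $r(t)=\bar s$ only occurs at equality, and I would handle it with a strict inequality for $t>T$ followed by a continuity/limit argument at $t=T$. Alternatively, one can note that the sphere barrier is strict after positive time by the strong separation of level sets.

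\emph{Step 3: star-shapedness.} For $t\ge T$ the whole of $\partial\Omega_t$ and $\partial E_t$ lies outside $B({\bf 0},\bar s)$. Theorem~\ref{thm: gradient bound} then says each is a radial Lipschitz graph $\exp_0(r_t(\nu)\nu)$, with finite gradient bound since $\tanh r_t>\tanh\bar s$. Being the boundary of a bounded set containing ${\bf 0}$, it meets every ray, and local graphicality yields exactly one point per ray. So $r_t$ is a globally defined Lipschitz function on $\S^n$, giving star-shapedness and a homeomorphism with $\S^n$.

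\emph{Main obstacle.} The delicate points are the endpoint $t=T$, where the barrier radius may equal $\bar s$ and the gradient bound \eqref{eq: grad estimate} degenerates, and the global injectivity of the radial projection. For the latter, I would argue via Proposition~\ref{prop: monotonicity}: along each ray beyond $\bar s$, $u_t$ is non-decreasing, so $\{u_t<0\}$ and $\{u_t>0\}$ meet each ray in an initial and a final segment, respectively.
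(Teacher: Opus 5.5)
Your proposal is correct and follows essentially the same route as the paper. It uses the same three ingredients: the bound $\bar s\le r_+$, the inner geodesic-sphere barrier with $\sinh r(t)=e^{t/n}\sinh r_-$ pushing $\partial\Omega_t$ and $\partial E_t$ out of $B(\mathbf 0,\bar s)$, and then Theorem~\ref{thm: gradient bound}. The paper's proof likewise tacitly takes the in- and out-balls to be concentric at $\mathbf 0$, and it only derives the inclusion $\Sigma_t\subset\{r>r_+\}$ for $t>T$; your caveats about concentricity and the endpoint $t=T$ therefore flag points that the paper itself leaves implicit.
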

  
\begin{proof}
   Note that the overall optimal  admissible value for $(\Sigma_0,\Omega_0, E_0)$, $\bar s$, satisfies $\bar s\le r^+$. Therefore, by Theorem~\ref{thm: gradient bound}, it suffices to show that 
    \[
        \partial \Omega_t \;\subset\; \mathbb{S}^{n}\times (r_+,\infty)
        \qquad \text{for all } t\in(T,\infty)\,.
    \]
    Indeed, by assumption we have
    \[
        \partial \Omega_0=\Sigma_0 \;\subset\; \mathbb{S}^n\times(r_-,\infty).
    \]
    Let \(\partial \Omega_0'=\Sigma_0'=\mathbb{S}^n\times\{r_-\}\) and consider the ICF \(\partial \Omega'_t=\Sigma_t'=\mathbb{S}^n\times\{r(t)\}\) with initial hypersurface \(\Sigma_0'\). As previously observed, its evolution satisfies
    \[
        \sinh(r(t)) \;=\; \sinh(r_-)\,e^{\frac{t}{n}}.
    \]
    By the avoidance principle for viscosity solutions of \eqref{ICF}, we have
    \[
        \mathbb{S}^n\times (0,r(t)) \;=\; \Omega_t' \;\subset\; \Omega_t
        \qquad \text{for all } t>0.
    \]
    In particular, for \(t=T\) we obtain
    \(
        \mathbb{S}^n\times (0,r_+) \;=\; \Omega_T' \;\subset\; \Omega_T,
    \)
    which implies the desired inclusion
    \[
        \Sigma_t \;\subset\; \mathbb{S}^{n}\times (r_+,\infty)
        \qquad \text{for all } t\in(T,\infty).
    \]
\end{proof}

If we further assume that \(\{\Sigma_t\}_{t\in(0, \infty)}\) is smooth, then by a result of Gerhardt \cite[Theorem~1.1]{Gerhardt2014} we conclude that the evolving hypersurfaces converge exponentially fast to a geodesic sphere at infinity. Hence, we obtain the following corollary, which extends Gerhardt's result by removing the star-shapedness assumption on the initial hypersurface.

\begin{corollary}
    Let \(\{\Sigma_t\}_{t\in(0, \infty)}\) be a smooth, compact solution to the inverse curvature flow \eqref{ICF}. Then, the hypersurfaces \(\Sigma_t\) escape to infinity, become strongly convex exponentially fast and approach totally umbilic geometry: the principal curvatures are uniformly bounded and converge exponentially fast to \(1\). In particular, the stability framework of Sahjwani--Scheuer~\cite{SahjwaniScheuer2024} applies to quantify the closeness of $\Sigma_t$ to a geodesic sphere via the quermassintegral inequalities at large $t$.
\end{corollary}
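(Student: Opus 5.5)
The plan is to reduce the corollary to Gerhardt's theorem \cite[Theorem~1.1]{Gerhardt2014} by producing a time after which the smooth flow is star-shaped. Write $\Omega_0$ for the region bounded by $\Sigma_0$, and fix the center $\mathbf 0$ of the geodesic balls realizing the in-radius $r_-$ and out-radius $r_+$, so that $B(\mathbf 0,r_-)\subset\Omega_0\subset B(\mathbf 0,r_+)$. The smooth family $\{\Sigma_t\}$ coincides with the level-set (viscosity) solution of \eqref{ICF} starting from $\Sigma_0$, because smooth solutions are viscosity solutions and the viscosity solution is unique. Hence Theorem~\ref{thm: Reg} applies. For every $t\ge T$, with $T=n\log(\sinh r_+/\sinh r_-)$, it gives $\Sigma_t\subset\mathbb S^n\times(r_+,\infty)$. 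Since $\bar s\le r_+$, Theorem~\ref{thm: gradient bound} then shows that $\Sigma_t$ is a radial graph $r_t$ over $\mathbb S^n$ with
\[
|\nabla_{\mathbb S^n} r_t|\le \frac{r_t\tanh\bar s}{\sqrt{\tanh^2 r_t-\tanh^2\bar s}} .
\]

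The second step is to upgrade this Lipschitz radial graph to genuine star-shapedness in Gerhardt's sense. For a smooth radial graph the support function is $\langle \partial_r,\eta\rangle = (1+\sinh^{-2}r_t|\nabla r_t|^2)^{-1/2}>0$, and the estimate above bounds it below uniformly, since $r_t>r_+>\bar s$ strictly. So the slice $\Sigma_{T'}$, for any $T'>T$, is a smooth, compact, strictly star-shaped hypersurface.

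Strict mean convexity, or more precisely $\kappa\in\Gamma$, holds on $\Sigma_{T'}$ because the flow \eqref{ICF} is smooth there, which forces $\mathcal F>0$. We then restart the flow at time $T'$ with initial datum $\Sigma_{T'}$. Gerhardt's theorem gives existence for all time, the exponential decay $|\kappa_i-1|\le Ce^{-t/n}$, and the fact that the radii blow up so that $\Sigma_t$ escapes to infinity. In particular the principal curvatures become positive after finite time, so $\Sigma_t$ becomes strongly convex. By uniqueness of smooth solutions, the restarted flow agrees with the given one. Once umbilicity is quantified in this way, the Sahjwani--Scheuer stability estimates apply directly to $\Sigma_t$ for large $t$.

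The main obstacle is the second step. Theorem~\ref{thm: gradient bound} is stated only for the Lipschitz function $r_t$ "where differentiable", and it does not by itself certify a positive support function. Under the smoothness hypothesis, though, $r_t$ is smooth wherever $\Sigma_t$ is transverse to the radial direction. I would argue by continuity that transversality cannot fail: at a non-transverse point, Claim A of Theorem~\ref{thm: gradient bound} and the graphicality of Theorem~\ref{thm: Lipschitz bound} over $P_\nu(\bar s)$ for all $\nu$ near $\nu_0$ would be contradicted, because the tangent plane would contain the radial direction, which makes an angle less than $\pi/2$ with some such $\nu$. This yields a smooth graph with the explicit gradient bound.
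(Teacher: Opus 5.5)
Your proposal is correct and follows the paper's own argument: Theorem~\ref{thm: Reg} makes $\Sigma_t$ star-shaped after the waiting time $T$, and Gerhardt's \cite[Theorem~1.1]{Gerhardt2014}, applied to a smooth star-shaped slice with $\kappa\in\Gamma$, then gives all the stated asymptotics. The paper leaves implicit both the identification of the smooth flow with the level-set solution and the restart at a time $T'>T$, which you spell out.

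One small correction to your transversality step: the contradiction does not come from the radial direction making an angle less than $\pi/2$ with some nearby $\nu$. It comes from the fact that the radial geodesic through $p_0$ is exactly the normal geodesic to $P_{\nu_0}(\bar s)$, so a smooth $\Sigma_t$ tangent to it at $p_0$ could not be a graph over $P_{\nu_0}(\bar s)$ with locally bounded slope, as Theorem~\ref{thm: Lipschitz bound} asserts.
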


\begin{remark}
It is reasonable to conjecture that one does not need to assume a priori smoothness of the flow to apply Gerhardt's result. In other words, one may expect that once the flow becomes star-shaped, it also becomes smooth. This is indeed the case, for instance, for the inverse mean curvature flow in hyperbolic space of dimension \(3 \leq n \leq 7\), as shown by Li and Wei \cite{LiWei2017}; see also its application in \cite[Theorem 1.2]{Harvie2024}.
\end{remark}

Another corollary asserts that, if the initial hypersurface is not homeomorphic to a sphere, a singularity must occur in finite time:


\begin{corollary}
    Suppose that the compact initial hypersurface \(\Sigma_0\) is not homeomorphic to \(\mathbb{S}^n\). Then the corresponding viscosity solution to the level-set flow \eqref{eq:levelset} associated to \eqref{ICF} cannot remain a smooth hypersurface up to the time
    \[
        T \;=\; n\log\Big(\frac{\sinh(r_+)}{\sinh(r_-)}\Big),
    \]
    where \(r_-\) and \(r_+\) are the geodesic in-radius and out-radius of the initial domain \(\Omega_0\); equivalently, before time $T$, either a curvature singularity forms or the level-set flow exhibits a topological change (the boundary $\partial\Omega_t$ ceasing to be a smooth embedded hypersurface, including the possibility of fattening~\cite{EvansSpruck91,CGG91}).
\end{corollary}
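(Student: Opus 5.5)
The plan is to argue by contradiction, using Theorem~\ref{thm: Reg} as the only real input. Suppose that $\Sigma_0$ is not homeomorphic to $\mathbb{S}^n$, and that the level-set solution of \eqref{ICF} stays a smooth embedded hypersurface $\Sigma_t=\partial\Omega_t=\partial E_t$ on the whole interval $[0,T]$, with no fattening. Then $\{\Sigma_t\}_{t\in[0,T]}$ is a smooth isotopy of compact embedded hypersurfaces. For $t$ near $T$ (or at $T$), Theorem~\ref{thm: Reg} tells us that $\partial\Omega_t$ is star-shaped, and hence homeomorphic to $\mathbb{S}^n$. Since smooth isotopy preserves topology, $\Sigma_0$ would be homeomorphic to $\mathbb{S}^n$, which contradicts the hypothesis.

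The key steps are as follows.
\begin{enumerate}
\item Make precise what ``remains a smooth hypersurface up to time $T$'' means. We take it to be: the zero level set $\Sigma_t$ has empty interior, coincides with $\partial\Omega_t=\partial E_t$, and is a smooth embedded compact hypersurface. We also require the family to vary smoothly in $t$, with curvature bounded on $[0,T]$, so that the flow can be continued slightly past $T$.
\item Show that on such an interval the flow is a classical solution of \eqref{ICF}. The viscosity solution agrees with the smooth one by uniqueness. By compactness and the regularity of the velocity field $-\eta_t/\mathcal F$ (with $\mathcal F>0$ bounded below, since the smooth solution is admissible), the normal-graph parametrization gives a continuous family of embeddings. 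Hence $\Sigma_t$ is homeomorphic to $\Sigma_0$ for every $t$ in the interval.
\item Apply Theorem~\ref{thm: Reg} at a time $t\geq T$. This is where we use either the assumption that smoothness persists through $T$, or a short-time extension from the bounded curvature at $T$. At such a time $\partial\Omega_t$ is a radial graph over $\mathbb{S}^n$, so it is homeomorphic to $\mathbb{S}^n$, giving the contradiction.
\end{enumerate}
For the alternative phrasing in the statement: if the flow fails to be smooth, then either the curvature blows up or $\mathcal F\to0$ (a curvature singularity), or $\partial\Omega_t$ stops being a smooth embedded hypersurface. The latter includes topological change and fattening, i.e.\ $\Sigma_t$ acquiring interior.

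The main obstacle is the endpoint $t=T$. Theorem~\ref{thm: Reg} was proved for $t\in(T,\infty)$ through the strict inclusion $\Sigma_t\subset \mathbb{S}^n\times(r_+,\infty)$. So I would first check whether the proof's barrier argument already gives $\Sigma_T\subset\mathbb{S}^n\times[r_+,\infty)$, and whether the gradient estimate \eqref{eq: grad estimate} still yields radial graphicality on the closed region. If it does not, I would use the fact that a smooth flow on the closed interval $[0,T]$ with bounded geometry extends smoothly to $[0,T+\epsilon)$ by short-time existence for the strictly parabolic equation \eqref{ICF}. Then I would apply Theorem~\ref{thm: Reg} at $T+\epsilon/2$. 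A second, milder point is justifying that the viscosity and smooth solutions coincide and do not fatten. For this I would cite the uniqueness and consistency statements of \cite{CGG91,EvansSpruck91} used earlier in the paper.
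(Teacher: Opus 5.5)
Your argument is correct and matches the paper's intended reasoning. The paper states the corollary without a written proof, as an immediate consequence of Theorem~\ref{thm: Reg}: a flow that stays smooth through time $T$ is an isotopy and so preserves the topology of $\Sigma_0$, while star-shapedness at times $\ge T$ forces sphere topology. Your extra care at the endpoint is a legitimate refinement that the paper glosses over: the proof of Theorem~\ref{thm: Reg} only yields the strict inclusion for $t>T$, so you read ``up to $T$'' as the closed interval $[0,T]$ and extend the smooth flow slightly past $T$.
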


\section{Hypersurfaces with one point on the asymptotic boundary}\label{section1pt}

In this section we work in the upper half-space model
\[
\mathbb{H}^{n+1}=\{(x,y)\in\R^n\times(0,\infty)\},
\]
and we keep the notation for the hyperbolic metric and the asymptotic boundary
introduced in the preliminaries. Recall that in  this model, totally geodesic hyperplanes are either vertical Euclidean hyperplanes, or
 Euclidean hemispheres whose boundary lies in \(\{y=0\}\) and which meet \(\{y=0\}\) orthogonally.
The corresponding reflections are given by Euclidean reflections across vertical hyperplanes and Euclidean inversions across hemispheres orthogonal to \(\{y=0\}\). 

\medskip

\subsection{Admissible foliations (non-compact case)}\label{AF-NC}
For each \(x\in\R^n\) we consider the vertical geodesic
\begin{equation*}
\gamma_x(s)=(x,s),\qquad s>0,
\end{equation*}
and the associated foliation of \(\mathbb{H}^{n+1}\) by totally geodesic
hyperplanes orthogonal to \(\gamma_x\). Concretely, for \(s>0\), we set
\[
P_x(s)=\Big\{p\in\mathbb{H}^{n+1} : |p-(x,0)|=s\Big\},
\]
which is a Euclidean half-sphere, centered at $(x,0)$, and  orthogonal to \(\{y=0\}\), and we denote by
\(\mathcal R_{x,s}\in\operatorname{Iso}(\mathbb{H}^{n+1})\) the reflection across the
hyperplane \(P_x(s)\). We also introduce the associated half-spaces
\begin{equation}\label{Hupper}
\begin{split}
    H_+(x,s)&=\big\{p\in\mathbb{H}^{n+1}: |p-(x,0) |<s\big\}, \\
    H_-(x,s)&=\big\{p\in\mathbb{H}^{n+1}: |p-(x,0) |>s\big\}.
\end{split}
\end{equation}
\medskip

\subsection{Optimal admissible values (non-compact case).}
Using the admissible foliations defined in Section~\ref{AF-NC}, we extend the notion of the optimal admissible value $\bar s$ (retaining the notation from the compact case) to properly embedded hypersurfaces with $\partial_\infty\Sigma=\{\infty\}$. Throughout this section, all hypersurfaces are assumed to be connected and properly embedded.  We will show that, under suitable expanding curvature flows, the evolving hypersurfaces become graphical over appropriate horospheres.

In the half-space model, horospheres are hyperplanes parallel to $\{y=0\}$, and we will denote them by
\[
\mathcal H(s):=\{y=s\}\,,\,\,s>0\,.
\]
We will also denote by $\mathcal H ^+ (s):=\{y>s\}$ and $\mathcal H^- (s):=\{y<s\}$ the mean-convex and  the mean-concave side of $\mathcal H (s)$, respectively.

\begin{definition}\label{Sigma-nc}
Let \(\Sigma_0\subset\mathbb{H}^{n+1}\) be a properly embedded \(C^2\) hypersurface with \(\partial_\infty\Sigma_0=\{\infty\}\). Let \(\Omega_0\) be the open
set with boundary \(\partial\Omega_0=\Sigma_0\) and asymptotic boundary \(\partial_\infty\Omega_0=\{\infty\}\). For
\(x\in\R^n\times\{0\}\), we say that \(s_0>0\) is \emph{admissible in the
direction \(x\)} if
\[
\Sigma_0\cap H_+(x,s) = \emptyset \text{ or } \mathcal R_{x,s}\big(\Sigma_0\cap H_+(x,s)\big)\subset\overline{\Omega}_0
\, \, \text{for all }s\in(0,s_0).
\]
The number
\[
s_0(x):=\sup\Big\{ s_0>0 \text{ is admissible in the direction }x\Big\}
\]
is called the \emph{optimal admissible value} for \(\Sigma_0\) in the direction
of \(x\in\R^n\times\{0\}\).

We denote the overall optimal admissible value by
\[
\bar{s}:=\inf\{s_0(x):x\in\R^n\times\{0\}\}\,.
\]
\end{definition}
Note that, in general, $\bar s$ can be zero.
Moreover, \(s_0(x)\) may be infinite for some \(x\). If \(s_0(x)=+\infty\) for
every \(x\in\R^n\times\{0\}\), then \(\Sigma_0\) is automatically a global graph
over any horosphere $\mathcal H(s)$.

We consider the curvature flow starting from the hypersurface \(\Sigma_0\) as in \eqref{eveq-again}. In particular, we study a one-parameter family 
\(\{\Sigma_t\}_{t\in[0,T)} \subset \mathbb{H}^{n+1}\) of properly embedded, connected, complete \(C^2\) hypersurfaces satisfying
\[
\partial_\infty \Sigma_t = \{\infty\} \qquad \text{for all } t \in [0,T),
\]
and evolving with normal speed \(F\) in the direction of the inward unit normal \(\eta_t\). Here, by inward we mean the unit normal pointing towards \(\Omega_t\), the open set with boundary \(\partial\Omega_t = \Sigma_t\) and asymptotic boundary \(\partial_\infty \Omega_t = \{\infty\}\).

In the compact setting, viscosity solutions of the flow are obtained via the level-set formulation introduced in Section~\ref{sec:levelset}, and the graphical description of the evolving hypersurfaces follows from the Aleksandrov reflection machinery developed there. For non-compact initial data with a single point on the asymptotic boundary, the existence, and especially the uniqueness, of viscosity solutions becomes more delicate. 

For this reason, we will \emph{assume} the existence of a viscosity solution with initial set \(\Sigma_0\); that is, a proper function \(u\) solving \eqref{eq:levelset} such that
\[
\partial_\infty(\partial E_t) = \{\infty\} = \partial_\infty(\partial \Omega_t).
\]
Under this assumption, the same reflection arguments yield graphical and containment properties analogous to those in the compact case. We collect this transfer in a lemma.

\begin{lemma}[Transfer of the reflection scheme to the one-point case]\label{lem:transfer-1pt}
Let $\Sigma\subset\H^{n+1}$ be a properly embedded $C^2$ hypersurface with $\partial_\infty\Sigma=\{\infty\}$, let $\Omega\subset\H^{n+1}\setminus\Sigma$ be the open set with $\partial\Omega=\Sigma$ and $\partial_\infty\Omega=\{\infty\}$, and let $E=\H^{n+1}\setminus\overline\Omega$. Then:
\begin{itemize}
   \item[(i)] $\partial_\infty\Sigma\cap\partial_\infty P_x(s)=\emptyset$ and $\Sigma\cap\overline{H_+(x,s)}$ is compact, for every $x\in\R^n\times\{0\}$ and every $s\in(0,+\infty)$. In particular, the Aleksandrov reflection method of Section~\ref{sec:Aleksandrov} can be applied with the foliation $\{P_x(s)\}_{s>0}$, and the first contact point (if any) occurs at a finite point of $\H^{n+1}$ (see~\cite{doCarmo} for details in the CMC case).
   \item[(ii)] With the admissibility notion of Definition~\ref{Sigma-nc}, Lemma~\ref{lem: admissibility}, Theorem~\ref{thm: admissible for all t}, and Proposition~\ref{prop: monotonicity} hold verbatim, with $P_\nu(s)$, $H_\pm(\nu,s)$, $\mathcal R_{\nu,s}$ replaced by $P_x(s)$, $H_\pm(x,s)$, $\mathcal R_{x,s}$.
\end{itemize}
\end{lemma}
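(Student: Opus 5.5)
For (i) I will work with the cone topology in the upper half-space model. The totally geodesic hyperplane $P_x(s)$ is the Euclidean hemisphere $\{|p-(x,0)|=s\}$, so its asymptotic boundary is the Euclidean sphere $\{|z-x|=s\}\subset\R^n\times\{0\}$. This is a compact set that does not contain $\infty$. Since $\partial_\infty\Sigma=\{\infty\}$, the two asymptotic boundaries are disjoint.

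For compactness I will argue by contradiction. The set $\overline{H_+(x,s)}$ is Euclidean-bounded, namely the closed half-ball of radius $s$. Suppose $\Sigma\cap\overline{H_+(x,s)}$ were not compact in $\H^{n+1}$. Since $\Sigma$ is properly embedded, hence closed, there would be a sequence $p_k\in\Sigma\cap\overline{H_+(x,s)}$ leaving every compact subset of $\H^{n+1}$. Because the half-ball is Euclidean-bounded, this forces $y(p_k)\to0$, and a subsequence converges in the cone topology to a point of the closed disc $\{|z-x|\le s\}\times\{0\}$. That limit point would then belong to $\partial_\infty\Sigma=\{\infty\}$, which is impossible.

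With compactness in hand, the Aleksandrov procedure of Section~\ref{sec:Aleksandrov} runs as in the compact case. For small $s$ the set $\Sigma\cap H_+(x,s)$ is empty. As $s$ increases, the pieces $\Sigma\cap\overline{H_+(x,s)}$ are compact, and so are their reflected images under $\mathcal R_{x,s}$. The infimum defining a first contact is therefore attained at a finite point, using continuity of $(s,p)\mapsto\mathcal R_{x,s}(p)$.

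For (ii) I will go through the three proofs and check that each uses only features still available here.

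Lemma~\ref{lem: admissibility} is purely metric: it uses that $\mathcal R_{x,s}$ is an isometry fixing $P_x(s)$, that the geodesic $[x,x^*]$ meets $P$ orthogonally, and that closest points on $\Sigma$ exist. The last point holds because $\Sigma$ is closed and $\H^{n+1}$ is proper. Definition~\ref{Sigma-nc} gives (A1)/(A2) through the identity $\overline\Omega\cup E=\H^{n+1}$ exactly as before. The only change is orientation: here $H_+(x,s)$ is the bounded side, and the admissible range is $(0,s_0)$ rather than $(s,\infty)$.

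Theorem~\ref{thm: admissible for all t} uses isometry invariance of \eqref{eq:levelset} together with a comparison principle. The comparison principle is where I expect the real obstacle, since in the non-compact setting it can fail without growth control. I will handle it through the standing assumption that a proper viscosity solution $u$ exists with $\partial_\infty$ of its level sets equal to $\{\infty\}$. I will apply comparison to $u_t$ and $u_t\circ\mathcal R_{x,s}$ on the region $H_+(x,s)$, with boundary data equal on $P_x(s)$. By part (i) the relevant sets are compact there, since $H_+(x,s)$ touches infinity only along $\partial_\infty P_x(s)$, away from $\infty$. So the comparison reduces to a bounded-domain comparison, and the compact-case references apply (with the truncated distance making $u$ bounded near the ideal boundary).

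Proposition~\ref{prop: monotonicity} only uses bisecting hyperplanes. Replace $P_\nu$ by a hemisphere $P_x(s)$ with $s<\bar s$, and move along a geodesic normal to it into $H_+$. The hyperplane bisecting $[x',x]$ is orthogonal to that geodesic, so it is again of the form $P_{x''}(s'')$ for some $x''$. It lies deeper inside $H_+$, hence has $s''<\bar s$ and is admissible, and the same inequality follows.
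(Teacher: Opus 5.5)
Your part (i), and your checks of Lemma~\ref{lem: admissibility} and Proposition~\ref{prop: monotonicity}, follow the paper's route. The paper's whole argument is that $\overline{H_+(x,s)}$ lies in a compact subset of $\overline{\H^{n+1}}$ avoiding $\infty$, and that (ii) is a line-by-line transcription. Your cone-topology sequence argument is a correct filling of that sketch. So is your observation that the bisecting hyperplane is ultraparallel to $P_x(\bar s)$, hence a hemisphere of smaller radius inside $H_+(x,\bar s)$.

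\textbf{Gap: the comparison step in Theorem~\ref{thm: admissible for all t}.} Your reason for reducing the comparison to a bounded domain is wrong as written. The paper does not discuss this point, so this is not a departure from its proof.

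\emph{What fails.} The ideal boundary of $H_+(x,s)=\{|p-(x,0)|<s\}$ is the whole closed disc $\{|z-x|\le s\}\times\{0\}$, not just $\partial_\infty P_x(s)$. Moreover, the inversion $\mathcal R_{x,s}$ extends to $\overline{\H^{n+1}}$ and sends the centre $(x,0)$ of that disc to $\infty$. Since $\Sigma_t$ has points tending to $\infty$, the set $\mathcal R_{x,s}(\Sigma_t\cap H_-(x,s))$ accumulates at $(x,0)$. This set is exactly the zero set of $v_t:=u_t\circ\mathcal R_{x,s}$ inside $H_+(x,s)$. So the compactness from part (i) controls $u_t$ but not $v_t$, and $v_t$ need not be bounded below near $(x,0)$. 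Your claim that ``the relevant sets are compact there'' therefore fails for the reflected function.

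\emph{How to repair it.} Suppose $u_t\le K$ everywhere, and $u_t\equiv K$ on a neighbourhood of the closed ideal disc intersected with $H_+(x,s)$, with that neighbourhood fixed for $t$ in bounded intervals. Then $u_t\ge v_t$ holds there for free. The comparison need only be run on the remaining relatively compact part of $H_+(x,s)$, where $u_t=v_t$ on $P_x(s)$.

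\emph{What the repair requires.} At $t=0$ this holds, because $d_{\H^{n+1}}(p,\Sigma_0)\to\infty$ as $p$ approaches a finite ideal point. For $t>0$ it requires that $\{u_t<K\}$, and not merely $\partial E_t$ and $\partial\Omega_t$, stay away from the finite ideal points. It also requires $u_t\le K$, which is itself a whole-space comparison statement. Both must be built into the standing assumption; they cannot be deduced from part (i).
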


The proof of (i) is immediate from the assumption $\partial_\infty\Sigma=\{\infty\}$ and the fact that $\overline{H_+(x,s)}$ is contained in a compact subset of $\overline{\H^{n+1}}$ not meeting~$\infty$. The proof of (ii) is a line-by-line reproduction of the compact-case arguments, using (i) to ensure that the reflection method is well-posed.

We remark that, due to non-compactness, this is entirely new material, even for smooth solutions. Therefore, in the remainder of this section we will focus on smooth solutions. However, we note that the results can be generalized to viscosity solutions as well, by replacing in the statements $\Sigma_t$, with the boundaries of $\{u<0\}$ and $\{u>0\}$.

\begin{theorem}\label{thmnc}
Let \(\Sigma_t\subset\mathbb{H}^{n+1}\), \(t\in(0,T)\), be a family of properly embedded \(C^2\) hypersurfaces with
\(\partial_\infty\Sigma_t=\{\infty\}\), such that \(\Sigma_0\) lies between two
horospheres \(\mathcal H(s_-)\) and \(\mathcal H(s_+)\) and evolves by \eqref{eveq-again}. Then, for
every \(x\in\R^n\times\{0\}\) and every \(t\in(0,T)\), the portion
\(
\Sigma_t\cap H_+(x,\bar{s})
\)
is a \(C^2\)-graph (in exponential coordinates) over \(P_{x}(\bar{s})\). Moreover, the gradient of this graph is locally bounded by a
constant that is independent of \(t\) and of the particular choice of the
curvature speed \(F\).
\end{theorem}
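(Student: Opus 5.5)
The plan is to transfer the proof of Theorem~\ref{thm: Lipschitz bound} to the half-space setting, using Lemma~\ref{lem:transfer-1pt}.

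\emph{Step 1: admissibility is preserved.} Since $\Sigma_0$ lies between $\mathcal H(s_-)$ and $\mathcal H(s_+)$ and $\partial_\infty\Sigma_0=\{\infty\}$, Lemma~\ref{lem:transfer-1pt}(i) shows that $\Sigma_0\cap\overline{H_+(x,s)}$ is compact for every $x$ and $s$. Hence the reflection procedure is well posed. Moreover, for $s<s_-$ the half-ball $H_+(x,s)$ does not meet $\Sigma_0$, so $\bar s\ge s_-$. I will record this bound even though it is not needed for graphicality.

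By Lemma~\ref{lem:transfer-1pt}(ii), the analogue of Theorem~\ref{thm: admissible for all t} holds. This gives $s_t(x)\ge s_0(x)\ge\bar s$ for all $t$ and all $x$, where the inequality is now reversed because admissibility is defined through a supremum. Consequently, every $s\in(0,\bar s]$ is admissible for $(\Sigma_t,\Omega_t,E_t)$ in every direction $x$. The analogue of Proposition~\ref{prop: monotonicity} then gives the following. Let $u_t$ be the signed distance to $\Sigma_t$. For $y\in P_x(\bar s)$, let $\gamma_y$ be the geodesic normal to $P_x(\bar s)$ pointing into the half-ball $H_+(x,\bar s)$. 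Then $u_t\circ\gamma_y$ is monotone along $\gamma_y$.

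One point needs checking here. The bisecting hyperplane of a segment $[p,p']\subset\gamma_y$ must be one of the $P_{x'}(\tilde s)$ with $\tilde s<\bar s$, chosen so that the correct endpoint lies in $H_+$. This holds because every totally geodesic hemisphere is some $P_{x'}(\tilde s)$. Since the segment lies inside $H_+(x,\bar s)$, its bisector is a hemisphere nested inside, and hence has radius smaller than $\bar s$. I expect this nesting claim to be the main obstacle. The bisector is orthogonal to a geodesic that is itself orthogonal to $P_x(\bar s)$, so I would verify the nesting after an isometry sending $P_x(\bar s)$ to a vertical hyperplane. In that picture the normal geodesics become hemispheres orthogonal to it, and the bisectors are disjoint from $P_x(\bar s)$ and lie on the correct side.

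\emph{Step 2: local graphicality with an angle bound.} I then repeat Claims~A and~B of Theorem~\ref{thm: Lipschitz bound} verbatim, with $P_{\nu}(\bar s)$ replaced by the nearby hyperplanes $P_{x'}(\bar s)$, $x'$ close to $x$. Both claims are purely geometric: they use Saccheri quadrilaterals, CAT($0$) comparison, and openness of $H_+(x',\bar s)$ in the parameter $x'$. Take $p_1,p_2\in\Sigma_t\cap H_+(x,\bar s)$ lying in a compact set $K$. Claim~A uses the monotonicity from Step~1 to bound below the angle $\alpha$ at $p_2$ by some $\delta(K)>0$. Claim~B then gives $|d_2-d_1|\le\cot\delta\cdot G(y_1,y_2,d_1)$. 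Together these show that $\Sigma_t\cap H_+(x,\bar s)$ is a graph over $P_x(\bar s)$ in exponential (Fermi) coordinates, with a Lipschitz constant depending only on $K$ and $\bar s$. In particular the constant is independent of $t$ and $F$.

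\emph{Step 3: from Lipschitz to $C^2$.} Since $\Sigma_t$ is assumed to be $C^2$, the graph function is $C^2$ wherever $\Sigma_t$ is transverse to the normal geodesics. Transversality follows from the Lipschitz bound. Indeed, a $C^2$ hypersurface that is a Lipschitz graph cannot contain the normal direction in its tangent space, because $\cot\delta<\infty$ bounds the slope of every secant. Therefore the graph is $C^2$, and its gradient is bounded by the same locally uniform constant.
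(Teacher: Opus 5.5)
Your proposal is correct and is essentially the paper's route. The paper gives no separate argument for Theorem~\ref{thmnc}; it obtains it by transferring Lemma~\ref{lem: admissibility}, Theorem~\ref{thm: admissible for all t} and Proposition~\ref{prop: monotonicity} through Lemma~\ref{lem:transfer-1pt} and then rerunning Claims~A and~B of Theorem~\ref{thm: Lipschitz bound}, exactly as you do. The nesting point you flag is quicker than you fear: the bisector and $P_x(\bar s)$ are both orthogonal to $\gamma_y$, hence disjoint, so the bisector lies in the bounded half-ball $H_+(x,\bar s)$, is a hemisphere $P_{x'}(\tilde s)$ with $\tilde s<\bar s$, and has $P_x(\bar s)$, hence the nearer endpoint, outside it.
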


We now refine this description by working directly with the vertical projection
onto \(\R^n\times\{0\}\) and exploiting the explicit geometry of the
upper half-space model.

\begin{theorem}\label{thm:gradup}
For all \(t\in[0,T)\),  \(\Sigma_t\cap \mathcal H^-(\bar{s})\) is a graph over \(\R^n\times\{0\}\), that is, it is given as
\[
(x,r_t(x)),\qquad x\in U\subset \R^n,
\]
for a suitable function \(r_t:U\to(0,\bar{s})\), which additionally satisfies  the
gradient estimate
\begin{equation}\label{eq: grad-estimate-hor}
\big|\nabla _{\mathbb{R}^{n}} r_t(x)\big|\;\leq\;
\frac{\sqrt{\bar{s}^2-r_t(x)^2}}{r_t(x)}
\qquad\text{whenever }r_t(x)<\bar{s}.
\end{equation}
\end{theorem}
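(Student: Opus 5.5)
The plan is to mimic the proof of Theorem~\ref{thm: gradient bound}, with the foliations $\{P_x(s)\}_{s>0}$ of Section~\ref{AF-NC} in place of $\{P_\nu(s)\}$, and to exploit that in the upper half-space model everything about these foliations is Euclidean. By Lemma~\ref{lem:transfer-1pt}(ii), Theorem~\ref{thm: admissible for all t} and Proposition~\ref{prop: monotonicity} hold in this setting. Hence for every $x\in\R^n\times\{0\}$ and every $s<\bar s$, the value $s$ stays admissible for $\Sigma_t$ in the direction $x$, for all $t$. Theorem~\ref{thmnc} then gives that $\Sigma_t\cap H_+(x,\bar s)$ is a graph over $P_x(\bar s)$.

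\textbf{Step 1 (which directions see a given point).} Fix $p=(x_0,r)\in\Sigma_t$ with $r<\bar s$. By \eqref{Hupper}, $p\in H_+(x,\bar s)$ if and only if $|x-x_0|^2+r^2<\bar s^2$, that is, $x$ lies in the open Euclidean ball $B(x_0,\rho)\subset\R^n$ with $\rho:=\sqrt{\bar s^2-r^2}$. This is the analogue of Claim A in the proof of Theorem~\ref{thm: gradient bound}. In particular $x=x_0$ is always allowed, and the vertical geodesic $\gamma_{x_0}$ is normal to every $P_{x_0}(s)$.

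\textbf{Step 2 (graphicality over $\R^n$).} Use the reflections $\mathcal R_{x_0,s}$, $s<\bar s$, which are Euclidean inversions in spheres centred at $(x_0,0)$. They preserve the vertical ray through $x_0$ and send height $a$ to height $s^2/a$. Admissibility then makes the level-set function monotone along this ray for heights below $\bar s$, exactly as in Proposition~\ref{prop: monotonicity}. Arguing as in Claim A of Theorem~\ref{thm: Lipschitz bound}, each vertical line meets $\Sigma_t\cap\mathcal H^-(\bar s)$ at most once. So this set is a graph $(x,r_t(x))$ over some $U\subset\R^n$, with $r_t<\bar s$.

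\textbf{Step 3 (cone condition and gradient bound).} For every $x\in B(x_0,\rho)$, the same monotonicity holds along the Euclidean ray from $(x,0)$ through $p$. Each such ray is orthogonal to all the hemispheres $P_x(s)$ and is mapped to itself by $\mathcal R_{x,s}$. Consequently, at a point where $r_t$ is differentiable, no tangent vector $\tau=(e,\nabla r_t\cdot e)$ of $\Sigma_t$ at $p$ can point along any of the directions $p-(x,0)=(x_0-x,r)$ with $|x_0-x|<\rho$. This is the analogue of the inequality $\langle x/|x|,\nu_0\rangle\le\tanh\bar s/\tanh r_0$ in Theorem~\ref{thm: gradient bound}: it confines every tangent direction to the complement of a cone about the vertical. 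The cone has half-opening determined by $\cos\theta=r/\bar s$ and $\sin\theta=\rho/\bar s$. Maximising over unit $e$ then gives the pointwise bound on $|\nabla_{\R^n}r_t|$, and I would carry out this trigonometric comparison to reach \eqref{eq: grad-estimate-hor}.

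\textbf{Main obstacle.} I expect the difficulty to be in this last comparison: choosing the correct angle to compare with, measured from the horizontal or from the vertical, and in the Euclidean or the hyperbolic normalisation. A naive Euclidean computation of the cone constraint $-\nabla r_t\cdot w+r\ge 0$ for $|w|<\rho$ yields the reciprocal ratio $r_t/\sqrt{\bar s^2-r_t^2}$ rather than the stated $\sqrt{\bar s^2-r_t^2}/r_t$. So I would first recheck which side of each $P_x(s)$ is $H_+$ in \eqref{Hupper}, together with the direction of monotonicity, before fixing the final inequality; I have not been able to derive the displayed estimate without this reinterpretation. A useful consistency check is the endpoint behaviour: the stated bound blows up as $r_t\to0$ and vanishes as $r_t\to\bar s$, whereas the naive cone argument gives the opposite. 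Which behaviour is correct depends on whether the cone constrains tangent or normal directions.
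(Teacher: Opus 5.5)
\textbf{The last step cannot reach the displayed estimate, but the fault is in the target, not in your argument.} Steps 1--3 are the right approach. The inequality your cone computation yields, $|\nabla_{\R^n}r_t|\le r_t/\sqrt{\bar s^2-r_t^2}$, is the correct and sharp consequence of admissibility. The displayed estimate is the reciprocal of this. Where $r_t>\bar s/\sqrt2$ it is stronger than anything the reflections give, and it fails there in examples; where $r_t\le\bar s/\sqrt2$ it is weaker than your bound, hence true.

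To settle your sign question, write $p_0=(x_0,r_0)$ and $\rho=\sqrt{\bar s^2-r_0^2}$. Suppose two points of the Euclidean ray from $(x,0)$ through $p_0$ lie at distances $d<d'<\bar s$ from $(x,0)$. The inversion of radius $\sqrt{dd'}<\bar s$ swaps them, so $u_t$ is non-increasing outward along the ray. Hence the direction $(x_0-x,r_0)$ points weakly into $\Omega_t$, that is, $r_0-\nabla r_t(x_0)\cdot(x_0-x)\ge 0$ for all $|x_0-x|<\rho$, which gives $\rho\,|\nabla r_t(x_0)|\le r_0$. So the cone constrains tangent directions, not normal ones. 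This is exactly the situation of Theorem~\ref{thm: gradient bound}, whose bound is $r_t\cot\theta$ with $\cos\theta=\tanh\bar s/\tanh r_t$ and likewise blows up at the barrier.

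\textbf{A concrete example where the displayed bound fails.} Take $n=1$ and let $\Sigma_0$ be the graph of $\min\{b,\max\{a,a+mx\}\}$, with the corners slightly rounded and $b>a\sqrt{1+m^{-2}}$.
\begin{itemize}
\item For $c>-a/m$, every ray from $(c,0)$ meets $\Sigma_0$ exactly once, passing from $E_0$ into $\Omega_0$. Hence $s_0(c)=\infty$.
\item For $c<-a/m$, the only points of $\Sigma_0$ within distance $a\sqrt{1+m^{-2}}$ of $(c,0)$ are points $(x',a)$ with $|x'-c|<a/m$. Each ray from $(c,0)$ through such a point either points left or is steeper than the ramp, so it meets $\Sigma_0$ only once. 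Hence $s_0(c)\ge a\sqrt{1+m^{-2}}$.
\item Rays from $(c,0)$ with $c\uparrow -a/m$, through ramp points near $(0,a)$, pass there from $\Omega_0$ into $E_0$. This forces $\bar s=a\sqrt{1+m^{-2}}$, up to an error that vanishes with the rounding.
\end{itemize}
Ramp points of height $h\in(a,\bar s)$ have slope $m$. Your bound $h/\sqrt{\bar s^2-h^2}$ is at least $m$ there, with equality in the limit $h\downarrow a$. The displayed bound $\sqrt{\bar s^2-h^2}/h$ is less than $1/m$ there and tends to $0$ as $h\uparrow\bar s$. So it fails near $h=\bar s$ for every $m$, and for all such $h$ when $m\ge 1$.

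\textbf{Where the paper's proof goes wrong.} It reaches the reciprocal by choosing $|\tilde x-x_0|=\rho$, so that $p_0\in P_{\tilde x}(\bar s)$. It then asserts that a neighbourhood of $p_0$ in $\Sigma_t$ lies in $H_+(\tilde x,\bar s)$ and is tangent to this hemisphere at $p_0$. Admissibility gives neither of these. It only gives $p_0\in H_+(x,\bar s)$ for $|x-x_0|<\rho$, together with the ray monotonicity you use. Moreover, two graphs tangent at a point have equal gradients there, so tangency would give an equality, not an inequality. In effect the paper requires the normal of $\Sigma_t$, rather than its tangent plane, to avoid the cone. That replaces $\cot\theta=r_0/\rho$ by $\tan\theta=\rho/r_0$, which is the slope of $P_{\tilde x}(\bar s)$ at $p_0$.

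\textbf{What to do instead.} Do not reinterpret your computation to match the display; state and prove $|\nabla_{\R^n}r_t|\le r_t/\sqrt{\bar s^2-r_t^2}$. This also changes the use in Theorem~\ref{thm:NonCompact}. There the uniform gradient bound must come from the upper barrier: for $t>T$ one has $r_t<s_+(t)<\bar s$, giving $|\nabla r_t|\le s_+(t)/\sqrt{\bar s^2-s_+(t)^2}$. It cannot come from the lower bound $r_t\ge s_-(T)$, as the paper argues.
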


\begin{proof}
Fix \(t\in[0,T)\) and let \(p_0\in\Sigma_t\) be such that
\(
r_0:=|p_0- (x_0,0)|<\bar{s}
\),
where
\(x_0\in\R^n \equiv \R^n\times\{0\}\) is such that the vertical geodesic
\(\gamma_{x_0}\) satisfies
\[
\gamma_{x_0}(0)=(x_0,0),\qquad \gamma_{x_0}(r_0)=p_0.
\]

\begin{quote}
   {\bf Claim A}: For any \(x\in\R^n\times\{0\}\) with \(|x-x_0|<\sqrt{\bar{s}^2-r_0^2}\) one has \(p_0\in H_+(x,\bar{s})\).
\end{quote}

\begin{proof}[Proof of Claim A]
Indeed, if \(|x-x_0|<\sqrt{\bar{s}^2-r_0^2}\), then
\begin{align*}
  |p_0-(x,0)|^2
  &=|(x_0,r_0)-(x,0)|^2 =|x_0-x|^2+r_0^2\\
  &<\bar{s}^2-r_0^2+r_0^2=\bar{s}^2,
\end{align*}
so \(p_0\in H_+(x,\bar{s})\) by \eqref{Hupper}, as claimed (see Figure~\ref{fig:horograph}).
\end{proof}
\begin{figure}
    \centering

\tikzset{every picture/.style={line width=0.5pt}} 

\begin{tikzpicture}[x=0.65pt,y=0.65pt,yscale=-1,xscale=1]

\draw    (34,224) -- (628,223) ;
\draw    (302.4,7) -- (306.4,223) ;
\draw  [dash pattern={on 0.84pt off 2.51pt}]  (206,15) ;
\draw    (206,224.02) .. controls (238.6,111.22) and (354.2,95) .. (403,224) ;
\draw  [dash pattern={on 0.84pt off 2.51pt}]  (245,221) .. controls (287.8,93.4) and (423.8,106.2) .. (441,224) ;
\draw    (303.8,142.62) -- (360.6,224.6) ;
\draw [color={rgb, 255:red, 208; green, 2; blue, 27 }  ,draw opacity=1 ]   (41.4,157.02) .. controls (143.8,94.62) and (558.2,189.02) .. (620.6,162.62) ;
\draw [color={rgb, 255:red, 74; green, 144; blue, 226 }  ,draw opacity=1 ]   (36.6,121.02) -- (617.4,122.62) ;
\draw [color={rgb, 255:red, 74; green, 144; blue, 226 }  ,draw opacity=1 ]   (36.6,174.62) -- (625.4,177.82) ;
\draw [line width=1.5]    (303.8,142.62) -- (357.33,115.38) ;
\draw [shift={(360,114.02)}, rotate = 153.03] [color={rgb, 255:red, 0; green, 0; blue, 0 }  ][line width=1.5]    (14.21,-4.28) .. controls (9.04,-1.82) and (4.3,-0.39) .. (0,0) .. controls (4.3,0.39) and (9.04,1.82) .. (14.21,4.28)   ;
\draw [color={rgb, 255:red, 208; green, 2; blue, 27 }  ,draw opacity=1 ]   (63,148.02) -- (63,139.02) -- (63,130.02) ;
\draw [shift={(63,128.02)}, rotate = 90] [color={rgb, 255:red, 208; green, 2; blue, 27 }  ,draw opacity=1 ][line width=0.75]    (10.93,-3.29) .. controls (6.95,-1.4) and (3.31,-0.3) .. (0,0) .. controls (3.31,0.3) and (6.95,1.4) .. (10.93,3.29)   ;

\draw (302.8,222.4) node [anchor=north west][inner sep=0.75pt]   [align=left] {$\displaystyle x_{0}$};
\draw (282.4,138.8) node [anchor=north west][inner sep=0.75pt]   [align=left] {$\displaystyle p_{0}$};
\draw (613.2,139.8) node [anchor=north west][inner sep=0.75pt]  [color={rgb, 255:red, 208; green, 2; blue, 27 }  ,opacity=1 ] [align=left] {$\displaystyle \Sigma _{t}$};
\draw (353.2,221.8) node [anchor=north west][inner sep=0.75pt]   [align=left] {$\displaystyle \tilde{x}$};
\draw (161,194) node [anchor=north west][inner sep=0.75pt]   [align=left] {$\displaystyle P_{x_{0}}(\overline{s})$};
\draw (445,197) node [anchor=north west][inner sep=0.75pt]   [align=left] {$\displaystyle P_{\tilde{x}}(\overline{s})$};
\draw (35,128) node [anchor=north west][inner sep=0.75pt]   [align=left] {$\displaystyle \textcolor[rgb]{0.82,0.01,0.11}{\nu _{\Sigma _{t}}}$};
\draw (308.02,106.09) node [anchor=north west][inner sep=0.75pt]  [rotate=-340.05,xslant=-0.08] [align=left] {$\displaystyle \nabla _{\mathbb{R}^{n}} y( x_{0})$};
\end{tikzpicture}
    \caption{Gradient estimate for $\Sigma_t$ obtained via a totally geodesic comparison hypersurface}
    \label{fig:horograph}
\end{figure}

Since \(\bar{s}\le s_0(x_0)\), there exists a neighborhood
\(\mathcal{V}\subset\Sigma_t\) of \(p_0\) which is a graph over a neighborhood
\(D\subset P_{x_0}(\bar{s})\). Shrinking \(\mathcal{V}\) if necessary, we may
assume that \(\mathcal{V}\) is a graph over a neighborhood
\(U(x_0)\subset\R^n\times\{0\}\), namely
\[
\mathcal{V}=\big\{(x,r_t(x)):x\in U(x_0)\big\}
\]
for some \(C^2\) function \(r_t:U(x_0)\to(0,\bar{s})\) with
\(r_t(x_0)=r_0\).

Next we prove the gradient estimate. Let \(\tilde{x}\in\R^n\times\{0\}\) be such that
\(|\tilde{x}-x_0|=\sqrt{\bar{s}^2-r_0^2}\). Then the boundary of the upper
half-space \(P_{\tilde{x}}(\bar{s})\) can be written as a Euclidean graph over
\(\R^n\times\{0\}\) of the form
\[
y(x)=\sqrt{\bar{s}^2-|x-\tilde{x}|^2}.
\]
A direct computation gives
\[
\nabla _{\mathbb{R}^{n}}y(x)=\frac{x-\tilde{x}}{\sqrt{\bar{s}^2-|x-\tilde{x}|^2}}
=\frac{x-\tilde{x}}{y(x)},
\]
and hence
\[
|\nabla_{\mathbb{R}^{n}}y| (x_0)
=\frac{|x_0-\tilde{x}|}{\sqrt{\bar{s}^2-|x_0-\tilde{x}|^2}}
=\frac{\sqrt{\bar{s}^2-r_0^2}}{y(x_0)}
=\frac{\sqrt{\bar{s}^2-r_0^2}}{r_0}.
\]
Since a neighborhood of \(p_0\) in \(\Sigma_t\) lies inside \(H_+(\tilde{x},\bar{s})\), the graph of \(r_t\) over \(U(x_0)\) lies below the graph of \(y\) and is tangent to it at \(x_0\).
Therefore the usual comparison of gradients of tangent graphs yields
\[
|\nabla _{\mathbb{R}^{n}} r_t(x_0)|\le|\nabla_{\mathbb{R}^{n}} y(x_0)|
=\frac{\sqrt{\bar{s}^2-r_0^2}}{r_0},
\]
which is precisely \eqref{eq: grad-estimate-hor} at the point \(x_0\).
Since \(p_0\) was arbitrary in the region \(\{r_t<\bar{s}\}\), the estimate
holds on the whole set where \(\Sigma_t\) lies below \(\mathcal H(\bar{s})\).
\end{proof}

\medskip

\subsection{Inverse curvature flows}

The next result is the horospherical counterpart of Theorems~\ref{thm: Lipschitz bound} and~\ref{thm: gradient bound} in the non-compact setting: it shows that, when the asymptotic boundary reduces to a single point, the evolving hypersurfaces become global graphs over a fixed horosphere with uniform gradient control.

We keep the same structural assumptions on the curvature function $\mathcal F$ as in Section~3.4 (Gerhardt's setting \cite{Gerhardt2014}), which in particular include IMCF. 

We now show that if the initial hypersurface \(\Sigma_0\) lies between two horospheres, then the evolving hypersurfaces \(\Sigma_t\) remain between two explicitly controlled horospheres for all later times along the flow.

\begin{theorem}\label{thm:upbar}
Let $\{\Sigma_t\}_{t\in[0, \infty)}$ be a solution to the inverse curvature flow \eqref{ICF}. Then
\begin{itemize}
\item[(i)]
If \(\Sigma_0\) lies on the concave side of  the horosphere \(\mathcal H(s_+)\), $\Sigma_0\subset\mathcal H^-(s_+)$, then $\Sigma_t\subset  \mathcal H^-(s_+(t))$, where 
\[
s_+(t):=s_+e^{-\frac{t}{n}},\qquad t\ge0\,.
\]
\item[(ii)] If \(\Sigma_0\) lies on the convex side of  the horosphere \(\mathcal H(s_-)\), $\Sigma_0\subset\mathcal H^+(s_-)$, then $\Sigma_t\subset  \mathcal H^+(s_-(t))$, where 
\[
s_-(t):=s_- e^{-\frac tn}\left(1+\sqrt{1- e^{-\frac{2t}{n}}}\right)^{-1},\qquad t\ge0\,.
\]

\end{itemize}
\end{theorem}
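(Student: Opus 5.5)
The plan is to compare $\Sigma_t$ with explicit solutions of \eqref{ICF} that stay umbilic, and to use the avoidance principle for \eqref{ICF}, as in the proof of Theorem~\ref{thm: Reg}. Throughout I use the convention of this section: $\Omega_t$ is the region above $\Sigma_t$ (the one with $\partial_\infty\Omega_t=\{\infty\}$), $\eta_t$ points into $\Omega_t$, and the flow moves $\Sigma_t$ \emph{away} from $\Omega_t$, since the speed $-1/\mathcal F$ is negative on convex pieces.

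\emph{Model solutions.} Let $\mathcal H(c)$ carry the normal pointing into $\{y>c\}$. All its principal curvatures equal $1$, so $\mathcal F=n$ and it moves downward with hyperbolic speed $1/n$. Since hyperbolic arclength along vertical lines is $dy/y$, this gives $\frac{d}{dt}\log c=-\frac1n$, so $\mathcal H(c)$ evolves as $\mathcal H(c\,e^{-t/n})$. Geodesic spheres evolve by \eqref{sphere evolution}. An equidistant hypersurface at distance $d$ from a totally geodesic hyperplane, with normal pointing towards the hyperplane, has all curvatures $\tanh d$. It therefore stays equidistant, with $\dot d=1/(n\tanh d)$, that is $\sinh d(t)=e^{t/n}\sinh d_0$. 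Here $d$ increases, so the surface moves away from the hyperplane.

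\emph{Part (i).} Since $\Sigma_0\subset\mathcal H^-(s_+)$, we have $\{y>s_+\}\subset\Omega_0$. To avoid a noncompact maximum principle, I would use compact barriers. For every geodesic ball $B$ with $\overline B\subset\{y>s_+\}$, avoidance gives that the evolved ball $B_t$, with $\sinh r(t)=e^{t/n}\sinh r_0$ and the same centre, stays in $\Omega_t$. A ball with Euclidean centre height $h$ and hyperbolic radius $r$ has lowest point at height $h e^{-r}$. Taking the centre on a vertical line at height $h=s_+e^{r_0}$ (so the ball touches $\mathcal H(s_+)$) gives lowest point at time $t$ equal to $s_+e^{r_0-r(t)}$. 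Then $r(t)-r_0\to t/n$ as $r_0\to\infty$. Taking the union over all horizontal positions and all $r_0$ yields $\{y>s_+e^{-t/n}\}\subset\Omega_t$, and hence $\Sigma_t\subset\mathcal H^-(s_+(t))$.

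\emph{Part (ii).} Now $\{y<s_-\}\subset E_0$, and we need barriers lying in $E_0$ whose $E$-side shrinks more slowly than $\Sigma_t$ can descend. For these I would use caps. Fix a Euclidean hemisphere $P$ of radius $\rho$ centred at $(x,0)$ (a totally geodesic hyperplane) and let $C$ be the equidistant on the inner side of $P$ at distance $d_0$: a spherical cap meeting $\{y=0\}$ at an angle, with top point at height $\rho e^{-d_0}$. Take $\Omega'$ to be the region outside the cap, so the normal points towards $P$ and the curvatures are $+\tanh d_0$. Choosing $\rho=s_-e^{d_0}$ makes the cap region lie in $\{y\le s_-\}\subset \overline{E_0}$. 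Avoidance, applied to $\Omega_t$ and $\Omega'_t$, keeps the evolved cap, now at distance $d(t)$, inside $E_t$. Its top is at height $s_-e^{d_0-d(t)}$. Optimizing over $d_0$ and over $x$ then shows that $\Sigma_t$ lies above this height. The optimization yields a height at least $s_-(t)$ as defined in the statement: for instance, $d_0\to\infty$ gives $s_-e^{-t/n}\ge s_-(t)$, and a specific finite choice of $d_0$ reproduces the stated formula. Either way $\Sigma_t\subset\mathcal H^+(s_-(t))$.

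\emph{Main obstacle.} I expect the main difficulty to be justifying avoidance in this noncompact setting. The caps are not compact in $\H^{n+1}$: they reach the ideal boundary $\{y=0\}$ at points different from $\infty$. To handle this, I would use Lemma~\ref{lem:transfer-1pt}(i): $\Sigma_t$ stays away from the finite part of $\partial_\infty$, so the first interior touching point of $\Sigma_t$ with the barrier occurs at a finite point. This lets the standard maximum principle argument run, exactly as in the compact avoidance used in Theorem~\ref{thm: Reg}. The balls in part (i) are compact, so that case is unproblematic.
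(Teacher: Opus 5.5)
\textbf{Verdict.} The proposal is correct once one integration error is fixed. Part (i) follows the paper; in part (ii) you take a different limit in the barrier family, which gives a sharper bound than the one stated.

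\textbf{Part (i).} This is the paper's argument: spheres of radius $r_0\to\infty$ resting on $\mathcal H(s_+)$, whose lowest point $s_+e^{r_0-r(t)}$ decreases to $s_+e^{-t/n}$. Phrasing it as $B_t\subset\Omega_t$ rather than avoidance plus connectedness changes nothing. One wording fix: $h$ must be the height of the \emph{hyperbolic} centre, which the flow keeps fixed; the Euclidean centre sits at $h\cosh r$.

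\textbf{Part (ii).} You use the paper's equidistant caps. Your appeal to Lemma~\ref{lem:transfer-1pt}(i) plays the same role as the paper's vertical-strip restriction. The genuine difference is the limit:
\begin{itemize}
\item The paper sends $d_0\to 0$ in $s_-e^{-d(t)}$, i.e.\ it uses caps collapsing onto the totally geodesic hemisphere of radius $s_-$. This is the weakest member of the family, and it is exactly where the factor $(1+\sqrt{1-e^{-2t/n}})^{-1}$ comes from.
\item You keep the caps tangent to $\mathcal H(s_-)$, with hyperplane radius $s_-e^{d_0}$, and send $d_0\to\infty$, so the caps approximate the horosphere itself.
\end{itemize}
Your choice gives $\{y<s_-e^{-t/n}\}\subset E_t$. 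This is the same sharp rate as in (i), and it implies the statement because $s_-(t)<s_-e^{-t/n}$ for $t>0$.

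\textbf{Correction 1: the ODE.} The equation $\dot d=\coth d/n$ integrates to $\cosh d(t)=e^{t/n}\cosh d_0$, as in the paper. The law $\sinh d(t)=e^{t/n}\sinh d_0$ you wrote is the sphere law. Your conclusion survives, because you only use $d(t)-d_0\to t/n$ as $d_0\to\infty$, which holds under either law. But the wrong law is not harmless. ``Optimising'' over $d_0$ with it would send $d_0\to 0$ and give $e^{d_0-d(t)}\to 1$. That would ``prove'' $\Sigma_t$ never drops below $\mathcal H(s_-)$, which already fails for the solution starting at $\Sigma_0=\mathcal H(2s_-)$.

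\textbf{Correction 2: the finite-$d_0$ claim.} Drop the claim that some finite $d_0$ reproduces the stated formula. With the correct law, $d_0\mapsto e^{d_0-d(t)}$ is strictly increasing on $(0,\infty)$. Its infimum $e^{-t/n}(1+\sqrt{1-e^{-2t/n}})^{-1}$ is the paper's $d_0\to 0$ endpoint, and its supremum $e^{-t/n}$ is your $d_0\to\infty$ endpoint. Neither is attained.
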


\begin{proof}

\begin{figure}[!ht]
    \centering

\tikzset{every picture/.style={line width=0.5pt}} 

\begin{tikzpicture}[x=0.55pt,y=0.55pt,yscale=-1,xscale=1]

\draw    (29,263) -- (633,263) ;
\draw [color={rgb, 255:red, 208; green, 2; blue, 27 }  ,draw opacity=1 ]   (39,106) -- (615,105) ;
\draw    (43,174) -- (617,170) ;
\draw [color={rgb, 255:red, 74; green, 144; blue, 226 }  ,draw opacity=1 ]   (53,206) .. controls (191,102) and (565,282) .. (617,170) ;
\draw   (191,66) .. controls (191,43.91) and (208.91,26) .. (231,26) .. controls (253.09,26) and (271,43.91) .. (271,66) .. controls (271,88.09) and (253.09,106) .. (231,106) .. controls (208.91,106) and (191,88.09) .. (191,66) -- cycle ;
\draw    (331,0) -- (331,263) ;
\draw   (149,88) .. controls (149,41.61) and (186.61,4) .. (233,4) .. controls (279.39,4) and (317,41.61) .. (317,88) .. controls (317,134.39) and (279.39,172) .. (233,172) .. controls (186.61,172) and (149,134.39) .. (149,88) -- cycle ;
\draw [color={rgb, 255:red, 74; green, 144; blue, 226 }  ,draw opacity=1 ]   (233,172) -- (233,162) -- (233,154) ;
\draw [shift={(233,152)}, rotate = 90] [color={rgb, 255:red, 74; green, 144; blue, 226 }  ,draw opacity=1 ][line width=0.75]    (10.93,-3.29) .. controls (6.95,-1.4) and (3.31,-0.3) .. (0,0) .. controls (3.31,0.3) and (6.95,1.4) .. (10.93,3.29)   ;
\draw [color={rgb, 255:red, 208; green, 2; blue, 27 }  ,draw opacity=1 ]   (475,105.02) -- (475,95.02) -- (475,87.02) ;
\draw [shift={(475,85.02)}, rotate = 90] [color={rgb, 255:red, 208; green, 2; blue, 27 }  ,draw opacity=1 ][line width=0.75]    (10.93,-3.29) .. controls (6.95,-1.4) and (3.31,-0.3) .. (0,0) .. controls (3.31,0.3) and (6.95,1.4) .. (10.93,3.29)   ;
\draw [color={rgb, 255:red, 0; green, 0; blue, 0 }  ,draw opacity=1 ]   (231,106) -- (231,96) -- (231,88) ;
\draw [shift={(231,86)}, rotate = 90] [color={rgb, 255:red, 0; green, 0; blue, 0 }  ,draw opacity=1 ][line width=0.75]    (10.93,-3.29) .. controls (6.95,-1.4) and (3.31,-0.3) .. (0,0) .. controls (3.31,0.3) and (6.95,1.4) .. (10.93,3.29)   ;

\draw (619,173) node [anchor=north west][inner sep=0.75pt]   [align=left] {$\displaystyle \mathcal{H}(s_+(t))$};
\draw (615,88) node [anchor=north west][inner sep=0.75pt]  [color={rgb, 255:red, 208; green, 2; blue, 27 }  ,opacity=1 ] [align=left] {$\displaystyle \mathcal{H}( s_{+})$};
\draw (31,188) node [anchor=north west][inner sep=0.75pt]  [color={rgb, 255:red, 74; green, 144; blue, 226 }  ,opacity=1 ] [align=left] {$\displaystyle \Sigma _{t}$};
\draw (162,51) node [anchor=north west][inner sep=0.75pt]   [align=left] {$\displaystyle \mathcal{S}_{r_{0}}{}$};
\draw (223,265) node [anchor=north west][inner sep=0.75pt]   [align=left] {$\displaystyle x_{0}$};
\draw (224,62) node [anchor=north west][inner sep=0.75pt]   [align=left] {$\displaystyle \cdot $};
\draw (119,115) node [anchor=north west][inner sep=0.75pt]   [align=left] {$\displaystyle \mathcal{S}_{r(t)}$};
\draw (210,107) node [anchor=north west][inner sep=0.75pt]   [align=left] {$\displaystyle ( x_{0} ,s_{+})$};

\end{tikzpicture}

    \caption{Spheres $\mathcal{S}_{r_0}$ tangent to $\mathcal H(s_+)$ serve as a barriers, that force the flow to stay below a horosphere.}
    \label{fig:spherebarrier}
\end{figure}
Recall that any horosphere is umbilic with principal curvatures equal to $1$. Therefore, the evolution of any horosphere \(\mathcal H(s(0))=\{y=s(0)\}\) under the inverse curvature flow
\eqref{ICF}, is given by the family of horosphere $\{\mathcal H(s(t))\}_{t\in [0, \infty)}$, where 
\[
s(t)=s(0)e^{-\frac{t}{n}},\qquad t\ge0\,.
\] 
Since our solution is non-compact, evolving horospheres (which are themselves non-compact) cannot be used directly as barriers, as the avoidance principle does not apply to two non-compact solutions. Instead, we proceed as follows. As upper barriers, we use geodesic spheres tangent to the horosphere  \(\mathcal H(s_+)\) with arbitrarily large radius (see Figure~\ref{fig:spherebarrier}); as lower barriers, we use equidistant hypersurfaces tangent to the lower horosphere  \(\mathcal H(s_-)\) (see Figure~\ref{fig:equidistantbarrier}).

We first prove~(i). For each $x_0\in\R^n$ and each $r_0>0$, let $\mathcal S_{r_0}^{(x_0)}\subset\H^{n+1}$ denote the geodesic sphere of hyperbolic radius $r_0$ centered at the point $C(x_0):=(x_0,s_+ e^{r_0})$ of the vertical geodesic through $(x_0,s_+)$. Equivalently, $\mathcal S_{r_0}^{(x_0)}$ is the Euclidean sphere with center $(x_0,s_+ e^{r_0}\cosh r_0)$ and Euclidean radius $s_+ e^{r_0}\sinh r_0$; it is tangent to $\mathcal H(s_+)$ from above at $(x_0,s_+)$, and the rest of the sphere lies in $\mathcal H^+(s_+)=\{y>s_+\}$ (i.e.\ on the side of $\mathcal H(s_+)$ opposite to $\Sigma_0$).

Since \(\Sigma_0\subset \mathcal H^-(s_+)\) strictly and \(\mathcal S_{r_0}^{(x_0)}\subset \overline{\mathcal H^+(s_+)}\), the sphere \(\mathcal S_{r_0}^{(x_0)}\) is initially disjoint from \(\Sigma_0\). Indeed, this holds for every \(x_0\in\mathbb R^n\) and every \(r_0>0\), since the two sets lie in opposite closed half-spaces and the only point of \(S_{r_0}^{(x_0)}\) on the separating hyperplane \(\mathcal H(s_+)\) is \((x_0,s_+)\), which does not belong to \(\Sigma_0\).

By~\eqref{sphere evolution}, under~\eqref{ICF} the sphere $\mathcal S_{r_0}^{(x_0)}$ evolves to spheres $\{\mathcal S_{r(t)}^{(x_0)}\}_{t\ge 0}$ with the same hyperbolic center $C(x_0)$ and hyperbolic radius $r(t)$ satisfying
\begin{equation*}\label{r(t) evolution}
\sinh r(t)\;=\;e^{\frac{t}{n}}\sinh r_0\,.
\end{equation*}
By the avoidance principle for~\eqref{ICF} between the compact, smooth flow $\{\mathcal S_{r(t)}^{(x_0)}\}$ and the properly embedded flow $\{\Sigma_t\}$ (cf.~\cite[Section~2]{HuiskenIlmanen2001}; the principle applies whenever one of the two flows is compact), $\Sigma_t\cap\overline{B_{r(t)}^{(x_0)}}=\emptyset$ for every $t\ge 0$, where $B_{r(t)}^{(x_0)}$ denotes the open hyperbolic ball bounded by $\mathcal S_{r(t)}^{(x_0)}$.

\medskip

As $x_0$ varies over $\R^n$, the centers $C(x_0)$ trace out the horosphere $\mathcal H(s_+ e^{r_0})=\{y=s_+ e^{r_0}\}$. Using the identity $d_{\H^{n+1}}(p,\mathcal H(c))=|\log(y(p)/c)|$ in the upper half-space model, the union
\[
U(r_0,t)\;:=\;\bigcup_{x_0\in\R^n}B_{r(t)}^{(x_0)}\;=\;\bigl\{p\in\H^{n+1}:\;s_+ e^{r_0-r(t)}<y(p)<s_+ e^{r_0+r(t)}\bigr\}
\]
is the hyperbolic tube of radius $r(t)$ around $\mathcal H(s_+ e^{r_0})$, and $\Sigma_t\cap U(r_0,t)=\emptyset$ by the avoidance step applied for every $x_0$. The complement of $U(r_0,t)$ has two connected components, $\{y\le s_+ e^{r_0-r(t)}\}$ and $\{y\ge s_+ e^{r_0+r(t)}\}$. At $t=0$ one has $r(0)=r_0$, and $\Sigma_0\subset\{y<s_+\}=\{y<s_+ e^{r_0-r(0)+r_0}\}$ (note $s_+ e^{r_0-r(0)}=s_+$) lies in the lower component; by continuity in $t$ and connectedness of $\Sigma_t$, $\Sigma_t$ remains in the lower component for every $t\in[0,T_{\max})$, i.e.
\[
\Sigma_t\;\subset\;\bigl\{y\le s_+ e^{r_0-r(t)}\bigr\}\,.
\]
Finally, letting $r_0\to+\infty$ and using $\sinh r(t)=e^{t/n}\sinh r_0$, we get $r(t)=r_0+t/n+O(e^{-2r_0})$ and hence $s_+ e^{r_0-r(t)}\to s_+ e^{-t/n}$. Therefore
\[
\Sigma_t\;\subset\;\mathcal H^-(s_+(t))\,,\qquad s_+(t):=s_+ e^{-t/n}\,,
\]
as claimed.

\begin{figure}[!ht]
    \centering

\tikzset{every picture/.style={line width=0.5pt}} 

\begin{tikzpicture}[x=0.55pt,y=0.55pt,yscale=-1,xscale=1]

\draw    (40,261) -- (644,261) ;
\draw    (54,170) -- (628,166) ;
\draw [color={rgb, 255:red, 74; green, 144; blue, 226 }  ,draw opacity=1 ]   (62,139) .. controls (200,35) and (574,215) .. (626,103) ;
\draw    (342,-2) -- (342,261) ;
\draw  [draw opacity=0] (167.53,265.08) .. controls (167.49,263.99) and (167.46,262.88) .. (167.45,261.77) .. controls (166.9,213.17) and (196.9,173.43) .. (234.47,173.01) .. controls (272.04,172.58) and (302.94,211.63) .. (303.49,260.23) .. controls (303.5,261.16) and (303.5,262.08) .. (303.49,263.01) -- (235.47,261) -- cycle ; \draw   (167.53,265.08) .. controls (167.49,263.99) and (167.46,262.88) .. (167.45,261.77) .. controls (166.9,213.17) and (196.9,173.43) .. (234.47,173.01) .. controls (272.04,172.58) and (302.94,211.63) .. (303.49,260.23) .. controls (303.5,261.16) and (303.5,262.08) .. (303.49,263.01) ;  
\draw  [draw opacity=0] (168.71,264.29) .. controls (167.92,262.36) and (167.46,260.37) .. (167.37,258.35) .. controls (166.46,239.76) and (196.18,224.8) .. (233.73,224.94) .. controls (271.28,225.08) and (302.46,240.26) .. (303.36,258.84) .. controls (303.42,260.03) and (303.35,261.19) .. (303.17,262.34) -- (235.36,258.6) -- cycle ; \draw   (168.71,264.29) .. controls (167.92,262.36) and (167.46,260.37) .. (167.37,258.35) .. controls (166.46,239.76) and (196.18,224.8) .. (233.73,224.94) .. controls (271.28,225.08) and (302.46,240.26) .. (303.36,258.84) .. controls (303.42,260.03) and (303.35,261.19) .. (303.17,262.34) ;  
\draw  [dash pattern={on 0.84pt off 2.51pt}]  (49,225) -- (626,223) ;
\draw [color={rgb, 255:red, 208; green, 2; blue, 27 }  ,draw opacity=1 ] [dash pattern={on 4.5pt off 4.5pt}]  (146,0) -- (146,261) ;
\draw [color={rgb, 255:red, 208; green, 2; blue, 27 }  ,draw opacity=1 ] [dash pattern={on 4.5pt off 4.5pt}]  (317,1) -- (317,261) ;
\draw [color={rgb, 255:red, 0; green, 0; blue, 0 }  ,draw opacity=1 ]   (195,170.02) -- (195,160.02) -- (195,152.02) ;
\draw [shift={(195,150.02)}, rotate = 90] [color={rgb, 255:red, 0; green, 0; blue, 0 }  ,draw opacity=1 ][line width=0.75]    (10.93,-3.29) .. controls (6.95,-1.4) and (3.31,-0.3) .. (0,0) .. controls (3.31,0.3) and (6.95,1.4) .. (10.93,3.29)   ;
\draw [color={rgb, 255:red, 74; green, 144; blue, 226 }  ,draw opacity=1 ]   (355,121.02) -- (355,111.02) -- (355,103.02) ;
\draw [shift={(355,101.02)}, rotate = 90] [color={rgb, 255:red, 74; green, 144; blue, 226 }  ,draw opacity=1 ][line width=0.75]    (10.93,-3.29) .. controls (6.95,-1.4) and (3.31,-0.3) .. (0,0) .. controls (3.31,0.3) and (6.95,1.4) .. (10.93,3.29)   ;
\draw [color={rgb, 255:red, 0; green, 0; blue, 0 }  ,draw opacity=1 ]   (232,225.02) -- (232,215.02) -- (232,207.02) ;
\draw [shift={(232,205.02)}, rotate = 90] [color={rgb, 255:red, 0; green, 0; blue, 0 }  ,draw opacity=1 ][line width=0.75]    (10.93,-3.29) .. controls (6.95,-1.4) and (3.31,-0.3) .. (0,0) .. controls (3.31,0.3) and (6.95,1.4) .. (10.93,3.29)   ;

\draw (613,145) node [anchor=north west][inner sep=0.75pt]   [align=left] {$\displaystyle \mathcal{H}( s_{-})$};
\draw (41,129) node [anchor=north west][inner sep=0.75pt]  [color={rgb, 255:red, 74; green, 144; blue, 226 }  ,opacity=1 ] [align=left] {$\displaystyle \Sigma _{0}$};
\draw (234,263) node [anchor=north west][inner sep=0.75pt]   [align=left] {$\displaystyle x_{0}$};
\draw (232,255) node [anchor=north west][inner sep=0.75pt]   [align=left] {$\displaystyle \cdot $};
\draw (137,263) node [anchor=north west][inner sep=0.75pt]   [align=left] {$\displaystyle x_{1}$};
\draw (310,266) node [anchor=north west][inner sep=0.75pt]   [align=left] {$\displaystyle x_{2}$};
\draw (599,226) node [anchor=north west][inner sep=0.75pt]   [align=left] {$\displaystyle \mathcal{H}( s_{-}( t))$};
\draw (156,173) node [anchor=north west][inner sep=0.75pt]   [align=left] {$\displaystyle \mathcal{E}_{d_{0}}$};
\draw (225,184) node [anchor=north west][inner sep=0.75pt]   [align=left] {$\displaystyle \mathcal{E}_{d_{t}}$};
\draw (213,144) node [anchor=north west][inner sep=0.75pt]   [align=left] {$\displaystyle ( x_{0} ,s_{-})$};

\end{tikzpicture}
    \caption{Equidistant hypersurface $\mathcal E_{d_0}$ in $\mathcal H^+(s_-)$ serve as barriers that force the flow to stay above a horosphere.}
    \label{fig:equidistantbarrier}
\end{figure}

We next prove (ii). Fix a point \((x_0,s_-)\in \mathcal H(s_-)\) and consider an equidistant
hypersurface \(\mathcal E_{d_0}\) at (signed) distance \(d_0>0\) from a totally
geodesic hyperplane which is tangent to \(\mathcal H(s_-)\) at \((x_0,s_-)\), and contained in 
\(\overline {\mathcal H}^+(s_-)\), as in Figure~\ref{fig:equidistantbarrier}. The evolution of \(\mathcal{E}_{d_0}\) under the inverse
curvature flow \eqref{ICF}, is given by the family of equidistant
hypersurfaces \(\mathcal E_{d(t)}\), where 
\begin{equation}\label{d(t) evolution}
\cosh d(t)=e^{\frac{t}{n}}\cosh d_0\,.
\end{equation}
We can now apply the avoidance principle between the evolution of the equidistant hypersurfaces and our solution, which, as in case (i), yields $\Sigma_t\subset \mathcal H^+( s_-(t))$, where 
\[
s_-(t)= s_-e^{-d(t)}\,.
\] 
Note that even though both are evolutions of non compact surfaces the avoidance principle is still applicable as the ends of the two solutions are far apart. To make this more clear, one can consider for example restricting to a vertical strip
\[V(x_1,x_2)=\{x_1<x<x_2\}\times(0,\infty)\] containing \(\mathcal E_{d_0}\), so that
\(\Sigma_t':=\Sigma_t\cap V(x_1,x_2)\) is compact. Since 
\(\Sigma_0'\cap \mathcal E_{d_0}=\emptyset\) and $\partial \Sigma_t=\Sigma_t\cap \partial V(x_1, x_2)$ which is disjoint from $\mathcal E_{(d(t))}$, the avoidance principle yields
that \(\Sigma_t'\) stays disjoint from \(\mathcal E_{d(t)}\) and hence so does $\Sigma_t$.

Since the above is true for any $d_0>0$, the result follows after observing that  
\[
e^{-d(t)}\stackrel{d_0\to 0}{\longrightarrow} e^{-\frac tn}\left(1+\sqrt{1- e^{-\frac{2t}{n}}} \right)^{-1}\ge \frac12 e^{-\frac{t}{n}}\,.
\]

\end{proof}

Combining (i) of Theorem~\ref{thm:upbar} with Theorem~\ref{thm:gradup}, we see
that for all times 
\[
t\ge T:= n\log\frac{s_+}{\bar s}
\]
the hypersurfaces \(\Sigma_t\) are global graphs over the
horosphere \(\mathcal H(\bar{s})\).

Moreover, combining Theorem~\ref{thm:upbar} with Theorem~\ref{thm:gradup} we see that, for large times, $\Sigma_t$ remains trapped between two parallel horospheres and is a global graph over $\mathcal H(\bar{s})$ with uniform Euclidean-gradient bound. Specialising to the inverse mean curvature flow, these properties match the hypotheses of \cite[Theorem~1]{Allen}; we record the resulting statement below.

\begin{theorem}[Non-compact IMCF with one point at infinity]\label{thm:NonCompact}
Let $\Sigma_0\subset\H^{n+1}$ be a smooth, properly embedded hypersurface with $\partial_\infty\Sigma_0=\{\infty\}$ lying between two horospheres $\mathcal H(s_-)$ and $\mathcal H(s_+)$ ($0<s_-<s_+<\infty$), with $0<H_0\le H_{\Sigma_0}\le H_1<\infty$ and $|A_{\Sigma_0}|\le A_0<\infty$. Let $T_{\max}\in(0,+\infty]$ denote the maximal time of classical existence of the inverse mean curvature flow starting at $\Sigma_0$. Then:
\begin{itemize}
   \item[(a)] (\emph{a priori bounds}) By Theorem~\ref{thm:upbar}, the flow remains between the horospheres $\mathcal H(s_-(t))$ and $\mathcal H(s_+(t))$, with $s_\pm(t)$ as in Theorem~\ref{thm:upbar}; in particular $\Sigma_t$ stays in a fixed slab of $\H^{n+1}$ on any bounded time interval.
   \item[(b)] (\emph{graphicality and gradient bound}) For $t\ge T := n\log(s_+/\bar s)$, Theorem~\ref{thm:gradup} represents $\Sigma_t$ as a global graph over $\R^n\times\{0\}$ with Euclidean gradient bounded by $\sqrt{\bar s^{\,2}-r_t^{\,2}}/r_t$.
   \item[(c)] (\emph{long-time existence and convergence}) $T_{\max}=+\infty$, and $\Sigma_t$ converges asymptotically to a horosphere as $t\to+\infty$.
\end{itemize}
\end{theorem}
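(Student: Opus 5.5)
Parts (a) and (b) are essentially bookkeeping from earlier results, and the substantive step is (c). For (a), I will specialise Theorem~\ref{thm:upbar} to $\mathcal F=H$, which satisfies Gerhardt's structural assumptions. Since $\Sigma_0\subset\mathcal H^+(s_-)\cap\mathcal H^-(s_+)$, both barrier statements give $\Sigma_t\subset\{s_-(t)<y<s_+(t)\}$ on $[0,T_{\max})$. On any bounded interval $[0,T']$ the functions $s_\pm(t)$ are bounded above and away from zero, which gives a fixed slab. For (b), I note that $s_+(t)=s_+e^{-t/n}\le\bar s$ exactly when $t\ge n\log(s_+/\bar s)$. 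For such $t$ the whole of $\Sigma_t$ lies in $\mathcal H^-(\bar s)$, so Theorem~\ref{thm:gradup} applies at every point. This yields local graphicality over $\R^n$ with the stated gradient bound. To pass from local to global graphicality, I will argue that the vertical projection $\Sigma_t\to\R^n$ is a local homeomorphism. It is also proper, because $\Sigma_t$ is properly embedded, lies in a slab, and has $\partial_\infty\Sigma_t=\{\infty\}$. Hence it is a covering map onto $\R^n$, and since $\R^n$ is simply connected and $\Sigma_t$ is connected, it is a bijection. One caveat must be handled: (b) only makes sense when $\bar s>0$. Otherwise I must first show $\bar s>0$ using the slab and the curvature bounds, or add it as an assumption. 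I would try the former. In the slab, near the lowest contact point, the reflection across small hemispheres $P_x(s)$ with $s<s_-$ is vacuous, because $H_+(x,s)\subset\{y<s\}$ misses $\Sigma_0$. This gives $s_0(x)\ge s_-$ for all $x$, so $\bar s\ge s_->0$.

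For (c), the plan is to verify the hypotheses of Allen's theorem \cite[Theorem~1]{Allen} on the time-shifted flow starting at $\Sigma_T$, with $T=n\log(s_+/\bar s)$. First, I need short-time existence and persistence of the bounds $0<H_0\le H\le H_1$ and $|A|\le A_0$ on $[0,T]$. Lower bounds on $H$ along IMCF follow from the maximum principle applied to the evolution of $H$, localised with the slab confinement so that noncompactness is controlled by a cutoff in the Euclidean $x$-variables. Second, at time $T$ the solution is an entire graph $y=r_T(x)$ trapped between $s_-(T)$ and $s_+(T)$ with $|\nabla r_T|\le C$. Here $C$ is uniform because $r_T\ge s_-(T)>0$ bounds the right-hand side of \eqref{eq: grad-estimate-hor}. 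Third, I will check that this graph-in-a-slab with bounded gradient and mean curvature pinching is precisely the class of initial data for which Allen proves long-time existence and convergence, after rescaling by the horospherical dilation $y\mapsto e^{t/n}y$, to a horosphere. Then $T_{\max}=\infty$ follows by concatenating the two flows. Uniqueness of the smooth solution with bounded geometry is needed to identify them, and I would cite the standard uniqueness for IMCF graphs with $H$ bounded below.

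The main obstacle is the interface with Allen's hypotheses. I must make sure the bounds on $H$ and $|A|$ are not lost on $[0,T]$, before graphicality is available, in the noncompact setting. My first attempt would be a localised maximum principle using the confinement from (a) and the quantity $H\cdot y$ (or $H$ times the support function $\langle\eta,\partial_y\rangle$), which for graphs is the natural one. If that fails, I would add as a hypothesis that $\Sigma_0$ is already graphical, reducing (c) directly to \cite{Allen}.
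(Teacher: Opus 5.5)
Your parts (a) and (b) are sound, and in fact more careful than the paper's. The paper never checks that $\bar s>0$. You do, by observing that $H_+(x,s)\subset\{y<s\}$ misses $\Sigma_0$ for $s\le s_-$, which gives $\bar s\ge s_->0$. Theorem~\ref{thm:gradup} only yields local graphicality; your covering argument (a proper local homeomorphism onto the simply connected $\R^n$) upgrades this to a global graph over $\R^n$.

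For (c) you follow the paper's route exactly: verify the hypotheses of \cite[Theorem~1]{Allen} for $\Sigma_T$ and restart the flow there. The gap is precisely where you locate it. Nothing in your proposal shows that the classical flow exists on $[0,T]$ and keeps $H\ge c>0$ and $|A|\le C$ there.

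Your suggested localized maximum principle for $H\langle\eta,\partial_y\rangle$ (or $Hy$) cannot do this before time $T$. That quantity is only useful where $\langle\eta,\partial_y\rangle>0$, i.e.\ where $\Sigma_t$ is already a graph over the horosphere, and (b) supplies that only from time $T$ on. The bare evolution equation $\partial_t H=H^{-2}\Delta H-2H^{-3}|\nabla H|^2-(|A|^2-n)/H$ gives a lower bound on $H$ only if $|A|$ is controlled. Controlling $|A|$ along IMCF in turn needs a lower bound on $H$, so the argument is circular. The paper closes this step only by citing generic parabolic regularity and by asserting that the flow ``exists smoothly on $[0,T]$ by hypothesis''. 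That assertion presupposes $T<T_{\max}$, which is part of what (c) claims.

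For $n\ge 2$ no estimate can close this step in the stated generality. Suppose $\Sigma_0$ satisfies all the hypotheses but is not homeomorphic to $\R^n$, for instance a horosphere with a thin handle attached below, if this can be done keeping $H\ge H_0>0$. A classical flow on $[0,T]$ would give $\Sigma_T$ diffeomorphic to $\Sigma_0$. But your own argument for (b) makes $\Sigma_T$ a global graph over $\R^n$. Hence $T_{\max}\le T<\infty$ for such data; this is the one-point analogue of the paper's compact-case corollary on non-spherical initial data.

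Your fallback is therefore the right repair. You need an extra hypothesis guaranteeing that the flow stays classical up to time $T$ with controlled $H$ and $|A|$. For example, if $\Sigma_0$ is itself a graph over $\R^n\times\{0\}$ with bounded gradient, then \cite[Theorem~1]{Allen} applies directly at $t=0$. With such a hypothesis, however, you prove a strictly weaker statement than (c) as written. As it stands, (c) is established neither by your proposal nor by the paper's argument.
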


\begin{proof}[Proof of (c)]
We verify the hypotheses of~\cite[Theorem~1]{Allen} for $\Sigma_T$, viewed as the initial datum of an IMCF starting at time~$T$.~\cite[Theorem~1]{Allen} requires that the initial hypersurface satisfy:
\begin{enumerate}
\item[(A1)] bounded mean curvature: $0<H_0\le H(\cdot,0)\le H_1<\infty$,
\item[(A2)] bounded second fundamental form: $|A|(\cdot,0)\le A_0<\infty$,
\item[(A3)] global graphicality over $\R^n\times\{0\}$ in the upper half-space model, with bounded height and bounded Euclidean gradient,
\item[(A4)] uniform separation from $\R^n\times\{0\}$: the graph function $y$ satisfies $y\ge c>0$ for some constant $c$.
\end{enumerate}
Hypothesis (A3) is satisfied at time $T$ by (b): the graph function $r_T$ is bounded above by $s_+(T)<\infty$ (from (a)) and has Euclidean gradient bounded by $\sqrt{\bar s^{\,2}-r_T^{\,2}}/r_T\le\sqrt{\bar s^{\,2}-s_-(T)^{\,2}}/s_-(T)$ (from~\eqref{eq: grad-estimate-hor} and (a)). Hypothesis (A4) is satisfied at time $T$ by (a): $r_T\ge s_-(T)>0$. Hypotheses (A1) and (A2) hold at time $0$ by assumption ($0<H_0\le H_{\Sigma_0}\le H_1$ and $|A_{\Sigma_0}|\le A_0$); these bounds are propagated to time $T$ along the smooth IMCF on the bounded interval $[0,T]\subset[0,T_{\max})$ by standard parabolic regularity for fully nonlinear curvature flows (interior Schauder estimates combined with the evolution equations for $H$ and $|A|^2$; see, e.g., \cite[Section~3]{Gerhardt2014} for the smooth-flow estimates in $\H^{n+1}$, which apply on any bounded time interval on which the flow is classical). In particular, $\Sigma_T$ satisfies (A1) and (A2) with constants $H_0(T),H_1(T),A_0(T)$ depending on $T$ and the initial bounds. Applying~\cite[Theorem~1]{Allen} to $\Sigma_T$, viewed as a new initial datum, yields long-time existence beyond $T$ and asymptotic convergence of $\Sigma_t$ to a horosphere as $t\to+\infty$. Since the flow exists smoothly on $[0,T]$ by hypothesis, we conclude $T_{\max}=+\infty$ overall.
\end{proof}

\section{Hypersurfaces with two points on the asymptotic boundary}

In this section we continue working in the upper half-space model
\[
\mathbb{H}^{n+1}=\{(x,y)\in\R^n\times(0,\infty)\},
\]
as we did in Section \ref{section1pt}. Since we will be considering non compact surfaces whose asymptotic boundary consists of two points, we fix
\[
\{{\bf 0},\infty\}\subset \partial_\infty\H^{n+1}
\]
 and let $\gamma$ be the vertical geodesic with (asymptotic) endpoints ${\bf 0}$ and $\infty$, that is, 
 \[
\gamma(s)=(0,s),\qquad s>0\,.
\]

\subsection{Admissible foliations}\label{AF-2pts}
For each $x\in\partial_\infty\H^{n+1}\setminus\{{\bf 0},\infty\}$, we denote by  $\gamma_x$ the  unique geodesic perpendicular to $\gamma$ with $\gamma_x(0)\in\gamma$ and $\gamma_x(+\infty):=\lim_{s\to+\infty}\gamma_x(s)=x$. We also consider the associated foliation of the half-space
$$H_+(x):=\{p\in\H^{n+1}:\la p,x\ra>0\}$$
by totally geodesic hyperplanes $\{P_x(s)\}_{s\in(0,+\infty)}$ orthogonal to $\gamma_x$, as in see Figure~\ref{fig:fol2pts}. We denote by $\mathcal R_{x,s}\in\mathrm{Iso}(\H^{n+1})$ the reflection across $P_x(s)$. We also introduce the associated half-spaces
\begin{equation}\label{Hupper2pts}
\begin{split}
    H_+(x,s)&=\bigcup_{\tilde{s}\in(s,+\infty)}P_x(\tilde{s}), \\
    H_-(x,s)&=\bigcup_{\tilde{s}\in(0,s)}P_x(\tilde{s}).
\end{split}
\end{equation}

\begin{figure}[!ht]
    \centering
    \includegraphics[width=0.8\linewidth]{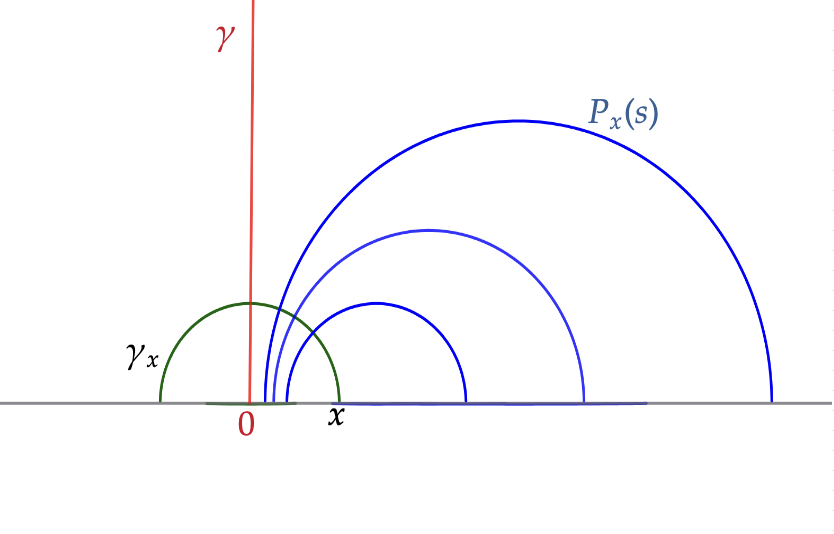}
    \caption{Admissible foliation}
    \label{fig:fol2pts}
\end{figure}
\subsection{Optimal admissible values.}
Using the admissible foliations defined in subsection \ref{AF-2pts}, we define the notion of the optimal admissible value $\bar s$ to properly embedded hypersurfaces with \(\partial_\infty\Sigma=\{{\bf 0},\infty\}\). We will show that, under suitable expanding curvature
flows, the evolving hypersurfaces become graphical over appropriate
cylinders whose axis is $\gamma$. In the halfspace model, these cylinders are given by
\begin{equation}\label{cylinder}
    \mathcal{C}(\rho):=\{p\in\H^{n+1}:d_{\H^{n+1}}(\gamma,p)=\rho\}\,,\quad\rho>0\,,
\end{equation}
which, setting $\lambda_\rho:=1/\sinh\rho$, is the Euclidean cone
\begin{equation*}
    \mathcal{C}(\rho)\;=\;\{(x,y)\in\R^n\times(0,\infty):\;y=\lambda_\rho\,|x|\}\,.
\end{equation*}
We denote by $\mathcal C^-(\rho):=\{p\in\H^{n+1}:d_{\H^{n+1}}(\gamma,p)<\rho\}$ and $\mathcal C^+(\rho):=\{p\in\H^{n+1}:d_{\H^{n+1}}(\gamma,p)>\rho\}$ the mean-convex (containing $\gamma$) and the mean-concave side of $\mathcal C(\rho)$, respectively. Recall that $\mathcal C(\rho)$ is isoparametric with two distinct principal curvatures: $\coth\rho$, with multiplicity $n-1$ (cross-section directions, with respect to the inward unit normal pointing towards $\gamma$), and $\tanh\rho$, with multiplicity $1$ (axial direction along $\gamma$). Its mean curvature is $H_{\mathcal C(\rho)}=(n-1)\coth\rho+\tanh\rho$, which satisfies $H_{\mathcal C(\rho)}>n$ for every $\rho<\infty$ and decreases monotonically to $n$ as $\rho\to\infty$ (see, e.g.,~\cite{CecilRyan}).

\begin{definition}\label{admis tp}
Let \(\Sigma_0\subset\mathbb{H}^{n+1}\) be a properly embedded \(C^2\) hypersurface with \(\partial_\infty\Sigma_0=\{{\bf 0},\infty\}\). Let \(\Omega_0 \subset \mathbb{H}^{n+1} \setminus \Sigma _0\) be the open
set with boundary \(\partial\Omega_0=\Sigma_0\) and asymptotic boundary \(\partial_\infty\Omega_0=\{{\bf 0},\infty\}\). For
$x\in\partial_{\infty}\H^{n+1}\setminus\{{\bf 0},\infty\}$, we say that \(s_0>0\) is \emph{admissible w.r.t. $x$} if 
\[
\Sigma_0\cap H_+(x,s) = \emptyset \text{ or } \mathcal R_{x,s}\big(\Sigma_0\cap H_+(x,s)\big)\subset\overline{\Omega}_0
\, \, \text{for all }s\in(s_0,\infty).
\]
\begin{figure}[!ht]
    \centering
    \includegraphics[width=0.8\linewidth]{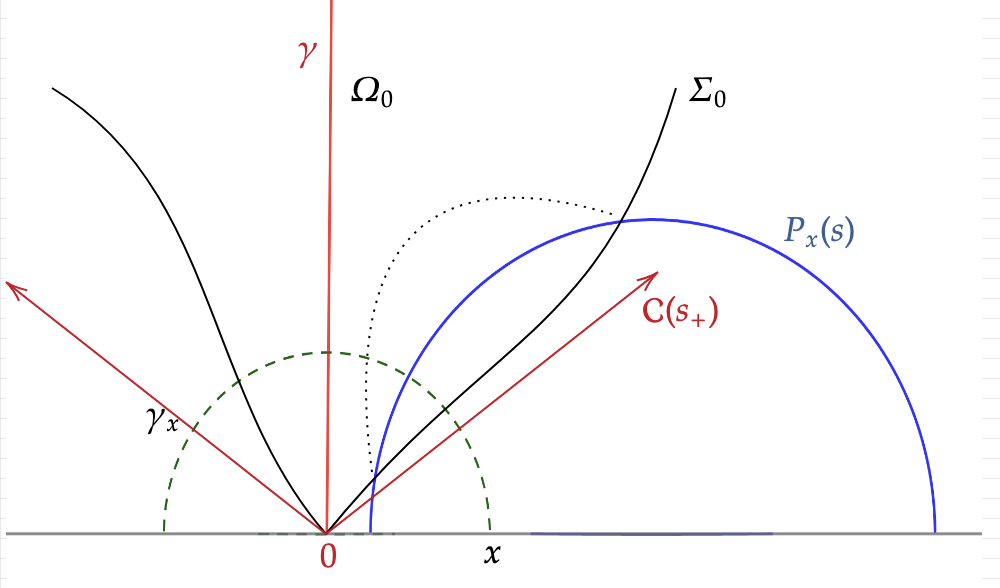}
    \caption{Admissibility of $\Sigma_0$}
    \label{fig:adm2pts}
\end{figure}

\noindent The number
\[
s_0(x)\;:=\;\inf\bigl\{\,s>0\;:\;s\text{ is admissible w.r.t.\ }x\,\bigr\}\;\in\;(0,+\infty)
\]
is called the \emph{optimal admissible value} for $\Sigma_0$ with respect to $x\in\partial_{\infty}\H^{n+1}\setminus\{{\bf 0},\infty\}$, with the convention $s_0(x)=+\infty$ when no $s>0$ is admissible.\\
The \emph{overall optimal admissible value} is
\[
\bar{s}\;:=\;\sup\bigl\{\,s_0(x)\;:\;x\in\partial_{\infty}\H^{n+1}\setminus\{{\bf 0},\infty\}\,\bigr\}\;\in\;(0,+\infty]\,.
\]
\end{definition}
\noindent
Two extreme behaviours will be useful:
\begin{itemize}
\item[(a)] If $s_0(x)=0$ for every $x\in\partial_{\infty}\H^{n+1}\setminus\{{\bf 0},\infty\}$, then $\Sigma_0$ is a $C^2$-graph in exponential coordinates over its orthogonal projection $\pi_{\mathcal C(s)}(\Sigma_0)\subset\mathcal C(s)$, for every $s>0$. To see this, fix $p_0\in\Sigma_0$ and $s>0$, and let $y_*\in\mathcal C(s)$ be the unique closest point to $p_0$ in $\mathcal C(s)$ (well-defined since $d_{\H^{n+1}}(\gamma,\cdot)$ is smooth and strictly convex on $\H^{n+1}\setminus\gamma$). Choose $x\in\partial_\infty\H^{n+1}\setminus\{\mathbf 0,\infty\}$ so that the totally geodesic hemisphere $P_x(s)$ is tangent to $\mathcal C(s)$ exactly at $y_*$. Since $s>s_0(x)=0$, $s$ is admissible w.r.t.\ $x$, so $\mathcal R_{x,s}(\Sigma_0\cap H_+(x,s))\subset\overline\Omega_0$ --- i.e., $\Sigma_0$ does not double back across $P_x(s)$. Equivalently, on the outward-normal geodesic ray from $y_*$, $\Sigma_0$ meets the ray in at most one point. As $p_0$ varies over $\Sigma_0$, the projection $\pi_{\mathcal C(s)}$ is therefore injective on $\Sigma_0$, exhibiting $\Sigma_0$ as a graph (in exponential coordinates from $\mathcal C(s)$) over $\pi_{\mathcal C(s)}(\Sigma_0)$. The image $\pi_{\mathcal C(s)}(\Sigma_0)$ is generally a proper subset of $\mathcal C(s)$; under the additional hypothesis that $\Sigma_0$ encloses $\mathcal C(s)$ (i.e., $\Sigma_0\subset\mathcal C^+(s)$), properness of $\Sigma_0$ with $\partial_\infty\Sigma_0=\{\mathbf 0,\infty\}$ implies $\pi_{\mathcal C(s)}(\Sigma_0)=\mathcal C(s)$, yielding a graph over the full cylinder.
\item[(b)] If there exists $s_+>0$ such that $\Sigma_0$ is contained in the closed cylindrical region bounded by $\mathcal{C}(s_+)$ (with $\partial_\infty\Sigma_0=\{\mathbf{0},\infty\}\subset\partial_\infty\mathcal{C}(s_+)$), then $\bar{s}\le s_+<\infty$.
\end{itemize}
In the remainder of this section we always work under~(b); in particular $\bar{s}$ is finite.

\begin{example}[Rotational examples for cases (a) and (b)]\label{ex:catenoid}
Recall that the equidistant tubes about $\gamma$ have mean curvature $H_{\mathcal C(\rho)}=(n-1)\coth\rho+\tanh\rho$, strictly decreasing from $+\infty$ to the
horospherical value $n$ as $\rho\to+\infty$. For a rotation hypersurface of constant
mean curvature $H$ about $\gamma$ (classified by do Carmo--Dajczer~\cite{doCarmoDajczer1983};
in $\H^3$ originally Mori~\cite{Mori1981}), whether case~(b) of Definition~\ref{admis tp}
holds is governed by the position of $H$ relative to this threshold value $n$.

\emph{(i) Case~(b) holds, and $\bar s$ has no uniform upper bound.} For $H>n$ the embedded (unduloid) members of the Delaunay-type family are complete,
properly embedded, periodic along $\gamma$, with $\partial_\infty \Sigma=\{\mathbf 0,\infty\}$, and
their hyperbolic distance to $\gamma$ oscillates in a bounded interval
$[\rho_{\min},\rho_{\max}]$; equivalently they are \emph{$1$-cylindrically bounded},
$\Sigma\subset\overline{\mathcal C^-(\rho_{\max})}$ ~\cite{Hsiang1982}. Hence case~(b)
of Definition~\ref{admis tp} applies and $\bar s(\Sigma)\le\rho_{\max}$. The constant
(isoparametric) members of this family are the equidistant cylinders $\mathcal C(s)$
themselves, for which $\mathcal C(s)\subset\overline{\mathcal C^-(s)}$ and the tangent
hyperplane $P_x(s)$ touches $\mathcal C(s)$ along $\gamma_x(s)$, so the bound is attained:
$\bar s(\mathcal C(s))=s$. As $s$ ranges over $(0,+\infty)$ the value $\bar s$ is therefore
unbounded above (and $\rho_{\max}\to+\infty$ as $H\downarrow n$ along the unduloid family).
This justifies the codomain $\bar s\in(0,+\infty]$ in Definition~\ref{admis tp}: no uniform
upper bound on $\bar s$ holds across two-ended properly embedded hypersurfaces with
$\partial_\infty \Sigma=\{\mathbf 0,\infty\}$.

\emph{(ii) Case~(b) is a genuine restriction.}
At the threshold $H=n$, the catenoid cousins of Bryant~\cite{Bryant1987} (see also
Umehara--Yamada~\cite{UmeharaYamada1993} and Rossman--Sato~\cite{RossmanSato1998}) are
properly embedded CMC surfaces of revolution about $\gamma$ with
$\partial_\infty\Sigma_a=\{\mathbf 0,\infty\}$, forming a one-parameter family $\{\Sigma_a\}_{a>0}$
in which $a$ is the \emph{neck radius}, i.e.\ the \emph{minimum} hyperbolic distance from
$\gamma$ to $\Sigma_a$, attained on the equatorial $S^1$. Their distance to $\gamma$ ranges
over $[a,+\infty)$ and tends to $+\infty$ at both ends, each end converging to one of the two
ideal endpoints of $\gamma$ (and asymptotic to a catenoid-cousin end, not to a
horosphere~\cite{CollinHauswirthRosenberg2001}). In particular $\Sigma_a$ is \emph{not}
contained in any tube $\mathcal C(s_+)$, so case~(b) fails for every $a>0$. 
\end{example}

We consider the curvature flow starting from the properly embedded hypersurface \(\Sigma_0\), $\partial_\infty \Sigma _0 =\{{\bf 0},\infty\}$ as in \eqref{eveq-again} i.e., a one-parameter family 
\(\{\Sigma_t\}_{t\in[0,T)} \subset \mathbb{H}^{n+1}\) of properly embedded, connected \(C^2\) hypersurfaces satisfying
\[
\partial_\infty \Sigma_t = \{{\bf 0},\infty\} \qquad \text{for all } t \in [0,T),
\]
and evolving with normal speed \(F\) in the direction of the inward unit normal \(\eta_t\). Here, by inward we mean the unit normal pointing towards \(\Omega_t\), the open set $\Omega_t \subset \mathbb{H}^{n+1}\setminus \Sigma _t$ with boundary \(\partial\Omega_t = \Sigma_t\) and asymptotic boundary \(\partial_\infty \Omega_t = \{{\bf 0},\infty\}\).\\
 In this section, we will \emph{assume} the existence of a viscosity solution with initial set \(\Sigma_0\); that is, a proper function \(u\) solving \eqref{eq:levelset} such that
\[
\partial_\infty(\partial E_t) = \{{\bf 0},\infty\} = \partial_\infty(\partial \Omega_t).
\]
Under this assumption, the same reflection arguments yield graphical and containment properties analogous to those in the compact case. The two-point analogue of Lemma~\ref{lem:transfer-1pt} reads as follows.

\begin{lemma}[Transfer of the reflection scheme to the two-point case]\label{lem:transfer-2pt}
Let $\Sigma\subset\H^{n+1}$ be a properly embedded $C^2$ hypersurface with $\partial_\infty\Sigma=\{\mathbf 0,\infty\}$. Then $\partial_\infty\Sigma\cap\partial_\infty P_x(s)=\emptyset$ and $\Sigma\cap\overline{H_+(x,s)}$ is compact, for every $x\in\partial_\infty\H^{n+1}\setminus\{\mathbf 0,\infty\}$ and every $s\in(0,+\infty)$ (with the foliation~\eqref{Hupper2pts} of Section~\ref{AF-2pts}). In particular, the Aleksandrov reflection method can be applied with this foliation, the first contact point (if any) occurs at a finite point of $\H^{n+1}$, and with the admissibility notion of Definition~\ref{admis tp}, Lemma~\ref{lem: admissibility} and Theorem~\ref{thm: admissible for all t} hold verbatim.
\end{lemma}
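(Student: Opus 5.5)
The plan is to argue as in Lemma~\ref{lem:transfer-1pt}: first establish the purely topological fact about ideal boundaries and compactness, and then observe that the earlier proofs only used properties that this fact guarantees.

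\emph{Step 1: the ideal boundary of the leaves.} Fix $x\in\partial_\infty\H^{n+1}\setminus\{\mathbf 0,\infty\}$ and $s>0$. The geodesic $\gamma_x$ is orthogonal to $\gamma$ at $\gamma_x(0)=(0,c)$ for some $c>0$. Hence $\gamma_x$ lies in the hemisphere $|p|=c$, and its other endpoint is the antipode $-x'$ of $x$ on the sphere $\{|z|=c\}\subset\R^n\times\{0\}$. Every hyperplane $P_x(s)$ with $s>0$ is orthogonal to $\gamma_x$ at an interior point of the ray $\gamma_x((0,\infty))$. Its ideal boundary is therefore a round $(n-1)$-sphere in $\partial_\infty\H^{n+1}$ that separates $x$ from $\gamma_x(-\infty)$.

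I would check that this sphere avoids both $\mathbf 0$ and $\infty$. The reflection $\mathcal R_{x,0}$ across $P_x(0)\ni\gamma$ fixes $\mathbf 0$ and $\infty$. So $\partial_\infty P_x(0)$ passes through $\mathbf 0$ and $\infty$, and for $s>0$ the sphere $\partial_\infty P_x(s)$ lies strictly inside the component of $\partial_\infty\H^{n+1}\setminus\partial_\infty P_x(0)$ containing $x$. Since the leaves are disjoint and nested, their closures in $\overline{\H^{n+1}}$ are disjoint as well. It follows that $\overline{H_+(x,s)}$, with the closure taken in the cone topology, is a compact subset of $\overline{\H^{n+1}}$ that does not contain $\mathbf 0$ or $\infty$. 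In particular $\partial_\infty\Sigma\cap\partial_\infty P_x(s)=\emptyset$.

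\emph{Step 2: compactness of the cap.} Take a sequence in $\Sigma\cap\overline{H_+(x,s)}$. It has a subsequence converging in $\overline{\H^{n+1}}$. If the limit were ideal, it would lie in $\partial_\infty\Sigma=\{\mathbf 0,\infty\}$ and also in $\overline{H_+(x,s)}$, which contradicts Step 1. So the limit is a finite point, and it lies in $\Sigma$ because $\Sigma$ is properly embedded and hence closed. This shows that $\Sigma\cap\overline{H_+(x,s)}$ is compact. Consequently the reflection procedure of Section~\ref{sec:Aleksandrov}, run by decreasing $s$ from $+\infty$, starts with an empty cap for $s$ large. Any first touching of $P_x(s)$, or of the reflected cap with $\Sigma$, is then a minimum of a continuous function on a compact set, so it happens at a finite point.

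\emph{Step 3: transfer of the earlier proofs.} The proof of Lemma~\ref{lem: admissibility} uses only three facts: $\mathcal R_{x,s}$ is an isometry fixing $P_x(s)$ pointwise and exchanging $H_\pm$; the triangle inequality through $P_x(s)$; and the existence of closest points on the closed set $\Sigma$, which holds because $\Sigma$ is proper. All three hold here unchanged. For Theorem~\ref{thm: admissible for all t}, the equation \eqref{eq:levelset} is still invariant under the isometry $\mathcal R_{x,s_0}$, so $u_t\circ\mathcal R_{x,s_0}$ is again a solution.

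The main obstacle is the comparison principle, because the solutions are now non-compact. Here I would use the standing assumption that $u$ is a proper viscosity solution with $\partial_\infty(\partial E_t)=\partial_\infty(\partial\Omega_t)=\{\mathbf 0,\infty\}$. By Steps 1 and 2, the relevant portions of $\{u_t=0\}$ and of its reflection inside $H_+(x,s_0)$ stay in a compact set. The inequality $u_t\ge u_t\circ\mathcal R_{x,s_0}$ holds on $P_x(s_0)$ with equality. Near the ideal boundary of $H_+(x,s_0)$ I would reduce to the truncated values $\pm K$ of the initial data. The comparison principle of \cite{CGG91,EvansSpruck91} would then be applied on the domain $H_+(x,s_0)\times[0,t]$, whose lateral boundary is $P_x(s_0)$, instead of on the whole space. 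This localization is the delicate step. The rest of the argument then follows the compact case line by line.
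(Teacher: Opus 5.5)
Your Steps 1--2 are the same argument the paper relies on. The paper spells it out only for Lemma~\ref{lem:transfer-1pt}: $\overline{H_+(x,s)}$ is compact in $\overline{\H^{n+1}}$ and misses the ideal points of $\Sigma$, so the cap is compact. For the second half of the lemma, the paper simply asserts that the compact-case proofs go through verbatim.

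One repair is needed in Step 1. Disjoint, nested hyperplanes need not have disjoint closures in $\overline{\H^{n+1}}$, since asymptotically parallel ones share an ideal point. The correct reason is that $P_x(0)$ and $P_x(s)$, $s>0$, have the common perpendicular $\gamma_x$ and are therefore ultraparallel. Concretely, $P_x(s)$ is the hemisphere of radius $\sqrt{\mu^2-|x|^2}$ centred at $\mu\,x/|x|$ for some $\mu>|x|$, and its ideal disc lies in $\{\langle z,x\rangle>0\}$.

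The genuine problem is in Step 3, where you say that, by Steps 1--2, the relevant level sets and their reflections inside $H_+(x,s_0)$ stay in a compact set. The comparison is between $u_t$ and $v_t:=u_t\circ\mathcal R_{x,s_0}$ on $H_+(x,s_0)$. The zero set of $v_t$ there is $\mathcal R_{x,s_0}(\Sigma_t\cap H_-(x,s_0))$. The non-compact part of $\Sigma_t$ lies in $H_-(x,s_0)$ and accumulates at $\mathbf 0$ and $\infty$. Hence this reflected set accumulates at $\mathcal R_{x,s_0}(\mathbf 0)$ and $\mathcal R_{x,s_0}(\infty)$, which are interior points of the ideal disc $\partial_\infty H_+(x,s_0)$.

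In the compact case, $u_t$ and $v_t$ both equal $K$ off a compact set, so the half-space comparison is really a comparison on a bounded domain. Here, by contrast, $v_t$ is non-constant arbitrarily close to the ideal boundary of the comparison domain, and compactness of the caps localizes nothing.

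What you actually need is the boundary inequality $u_t\ge v_t$ near the whole closed ideal disc, for all $t$ in the time interval. The natural route is to show that $u_t\le K$ everywhere and that $u_t\equiv K$ on a fixed neighbourhood of that disc, uniformly on compact time intervals. The truncation of $u_0$ gives this only at $t=0$. For $t>0$ it amounts to saying that the expanding region $\{u_t<K\}$ does not reach the ideal disc in finite time. That requires a separate barrier argument, or has to be built into the notion of solution.

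The paper does not prove this either; it folds it into the standing assumption that a suitable proper viscosity solution exists. So your outline is at the paper's level of rigor, but the compactness justification you give for the localization is incorrect and should be replaced by this ideal-boundary condition.
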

\begin{theorem}\label{thm2pts}
Let \(\Sigma_t\subset\mathbb{H}^{n+1}\), \(t\in(0,T)\), be a family of properly embedded \(C^2\) hypersurfaces evolving by \eqref{eveq-again}, with
\(\partial_\infty\Sigma_t=\{{\bf 0},\infty\}\) and such that \(\Sigma_0\) lies inside the 
cylinder  \(\mathcal C(s_+)\), with \(\mathcal C(s_+)\) as defined in \eqref{cylinder}. Then, for
every $x\in\partial_{\infty}\H^{n+1}\setminus\{{\bf 0},\infty\}$ and \(t\in(0,T)\), the portion
\[
\Sigma_t\cap H_+(x,\bar{s})
\]
is a \(C^2\)-graph (in exponential coordinates) over \(P_{x}(\bar{s})\). Moreover, the gradient of this graph is locally bounded by a
constant that is independent of \(t\) and of the particular choice of the
curvature speed \(F\).

\end{theorem}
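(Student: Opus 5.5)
The plan is to reproduce the proof of Theorem~\ref{thm: Lipschitz bound}, with the compact-case ingredients replaced by the two-point transfer given in Lemma~\ref{lem:transfer-2pt}.

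\textbf{Step 1: $\bar s$ is admissible for all $t$.} Hypothesis~(b) together with $\Sigma_0\subset\overline{\mathcal C^-(s_+)}$ gives $\bar s\le s_+<\infty$. Hence $\bar s\ge s_0(x)$ is admissible for $(\Sigma_0,\Omega_0,E_0)$ in every direction $x\in\partial_\infty\H^{n+1}\setminus\{\mathbf 0,\infty\}$. By Lemma~\ref{lem:transfer-2pt}, Lemma~\ref{lem: admissibility} and Theorem~\ref{thm: admissible for all t} hold verbatim. Therefore $\bar s$ stays admissible for $u_t$ in every direction $x$ and for every $t\in(0,T)$. The comparison principle needed here is applied to $u_t$ and $u_t\circ\mathcal R_{x,\bar s}$. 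These two functions have the same asymptotic boundary $\{\mathbf 0,\infty\}$, which is disjoint from $\partial_\infty P_x(\bar s)$. So the relevant region $\Sigma_t\cap\overline{H_+(x,\bar s)}$ is compact, and the comparison can be localized there.

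\textbf{Step 2: monotonicity along normal geodesics.} This is the analogue of Proposition~\ref{prop: monotonicity}. For $y\in P_x(\bar s)$, let $\gamma_y$ be the normal geodesic into $H_+(x,\bar s)$, and take $0\le r<r'$. The perpendicular bisector of $[\gamma_y(r),\gamma_y(r')]$ is a totally geodesic hyperplane. I must check that it is a leaf $P_{x'}(\tilde s)$ of one of the admissible foliations with $\tilde s>\bar s$. This is the main obstacle, because the foliations are tied to geodesics $\gamma_{x'}$ orthogonal to $\gamma$, and an arbitrary bisecting hyperplane need not be of that form.

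To handle it, I will first prove the following fact. A hyperplane $Q$ disjoint from $\gamma$ and lying in $H_+(x,\bar s)$ is orthogonal to the unique common perpendicular between $\gamma$ and $Q$. That perpendicular is some $\gamma_{x'}$, so $Q=P_{x'}(\tilde s)$ with $\tilde s=d(\gamma,Q)$. It then remains to show $\tilde s>\bar s$ and that $\gamma_y(r')$ lies on the side $H_+(x',\tilde s)$. For this I will use that $H_+(x,\bar s)$ is disjoint from the tube $\overline{\mathcal C^-(\bar s)}$, which holds because $P_x(\bar s)$ is tangent to $\mathcal C(\bar s)$ and totally geodesic hyperplanes lie outside the mean-convex side. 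Since the bisector is contained in $H_+(x,\bar s)$ (it is orthogonal to a geodesic leaving $P_x(\bar s)$ perpendicularly, beyond the midpoint), its distance to $\gamma$ exceeds $\bar s$. With this established, admissibility gives $u_t(\gamma_y(r))\le u_t(\gamma_y(r'))$.

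\textbf{Step 3: graphicality and angle bound.} I will repeat the proof of Theorem~\ref{thm: Lipschitz bound} word for word. I write $p_i=\exp_{y_i}(d_iN(y_i))$ and form the Saccheri quadrilateral over $P_x(\bar s)$. Perturbing the direction $x$ in a small neighbourhood, $p_1,p_2$ remain in $H_+(x',\bar s)$ by openness. Claim~A then holds: the angle $\alpha_{\H^{n+1}}\ge\delta$. Its proof uses Step~2 and an approximating sequence in $E_t$ (respectively $\Omega_t$) to obtain the contradiction $0<u_t(p_1^k)\le u_t(p_2)=0$. Claim~B is purely hyperbolic-geometric and carries over unchanged. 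It gives $|d_2-d_1|\le\cot\delta\,G(y_1,y_2,d_1)$, and hence a locally bounded slope.

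Since $\delta$ depends only on compact subsets of $H_+(x,\bar s)$ and on admissibility, and not on $t$ or $F$, the resulting local gradient bound is uniform in $t$ and $F$. Finally, for smooth $\Sigma_t$ a Lipschitz graph that is a $C^2$ hypersurface is a $C^2$ graph: the angle bound rules out vertical tangencies, so the implicit function theorem applies.
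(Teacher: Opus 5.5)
Your proposal is correct and follows the route the paper intends: Theorem~\ref{thm2pts} comes from transporting the proof of Theorem~\ref{thm: Lipschitz bound} to the two-point foliation via Lemma~\ref{lem:transfer-2pt}. Your Step~2 is a useful addition, since it supplies the two-point analogue of Proposition~\ref{prop: monotonicity}, which Lemma~\ref{lem:transfer-2pt} (unlike Lemma~\ref{lem:transfer-1pt}) does not list; the argument works because the bisector lies in $H_+(x,\bar s)$, is therefore ultraparallel to $\gamma$, and equals the leaf $P_{x'}(\tilde s)$ of its common perpendicular with $\gamma$, with $\tilde s>\bar s$ and $\gamma_y(r')\in H_+(x',\tilde s)$.

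There is one small slip in Step~1. The front of $u_t\circ\mathcal R_{x,\bar s}$ is $\mathcal R_{x,\bar s}(\Sigma_t)$, whose asymptotic boundary $\mathcal R_{x,\bar s}(\{\mathbf 0,\infty\})$ lies in $\partial_\infty H_+(x,\bar s)$, so the two functions do not share the asymptotic boundary $\{\mathbf 0,\infty\}$. The localization of the comparison must therefore rest only on the compactness of $\Sigma_t\cap\overline{H_+(x,\bar s)}$ given by Lemma~\ref{lem:transfer-2pt}.
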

For the next theorem we set some notation. For $s>0$, define
\[
   \lambda_s\;:=\;\frac{1}{\sinh s},
\]
so that in the upper half-space model the cylinder $\mathcal{C}(s)$ is the Euclidean cone $\{y=\lambda_s|x|\}$. For $p\in\H^{n+1}$ we denote by $q:=\pi_{\partial_\infty}(p)\in\R^n\times\{0\}$ its vertical projection onto the asymptotic boundary; for a point $y_0\in\mathcal{C}(\bar{s})$ we let $a\in\R^n$ be such that $y_0=(a,\lambda_{\bar{s}}|a|)$. Let $N$ denote the outward unit normal to $\mathcal{C}(\bar{s})$.

\begin{theorem}\label{thm:grad2pts}
Under the hypotheses of Theorem~\ref{thm2pts}, for every $t\in[0,T)$ the portion $\Sigma_t\cap\mathcal{C}^+(\bar{s})$ is a $C^2$-graph over the cylinder $\mathcal{C}(\bar{s})$, given by \eqref{cylinder}, in exponential coordinates, namely
\begin{equation*}
    \mathcal V_t \;=\; \bigl\{\,(y,\exp_y(r_t(y)\,N(y)))\;:\;y\in U\subset\mathcal{C}(\bar{s})\,\bigr\},
\end{equation*}
where $r_t:U\to(\bar{s},\infty)$ is a $C^2$ function. Moreover, writing $p_0=\exp_{y_0}(r_0\,N(y_0))$ with $y_0=(a,\lambda_{\bar{s}}|a|)\in\mathcal{C}(\bar{s})$ and $q_0=\pi_{\partial_\infty}(p_0)$, the intrinsic gradient of $r_t$ on $\mathcal{C}(\bar{s})$ satisfies, at $y_0$,
\begin{equation*}
    |\nabla r_t(y_0)|\;\leq\;\frac{\lambda_{\bar{s}}\sqrt{1+\lambda_{\bar{s}}}\,|a|}{r_0\,\lambda_{\bar{s}+r_0}\,|q_0|}.
\end{equation*}
\end{theorem}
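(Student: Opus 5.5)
The plan is to mimic the proofs of Theorem~\ref{thm: gradient bound} and Theorem~\ref{thm:gradup}: first obtain local graphicality from Theorem~\ref{thm2pts}, then get the gradient bound by comparing tangent directions of $\Sigma_t$ with totally geodesic hemispheres $P_x(\bar s)$ that contain $p_0$ in their half-space $H_+(x,\bar s)$.

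\emph{Step 1: a cylindrical analogue of Claim~A.} Fix $p_0\in\Sigma_t\cap\mathcal C^+(\bar s)$. Write $p_0=\exp_{y_0}(r_0N(y_0))$, where $y_0=(a,\lambda_{\bar s}|a|)$ is the closest point of $\mathcal C(\bar s)$ to $p_0$, so that $d(\gamma,p_0)=\bar s+r_0$ and $p_0$ lies on the cone $\mathcal C(\bar s+r_0)=\{y=\lambda_{\bar s+r_0}|x|\}$. In the half-space model the hyperplane $P_x(s)$ is a hemisphere centred at a point of $\R^n\times\{0\}$ on the ray through $x$, meeting $\gamma$ orthogonally in the geodesic sense. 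Consequently $H_+(x,s)$ is the Euclidean ball $\{|p-c_x(s)|<R_x(s)\}$ with $c_x(s)=\cosh s\,|x|^{-1}x\,\rho$-type centre and $R_x(s)\propto\sinh$-type radius (explicitly: centre $\rho\cosh(s)\,x/|x|$... up to scaling along $\gamma$, the scaling parameter $\rho>0$ being free). The goal here is to write down the open set of parameters $(x,\rho)$ for which $p_0\in H_+(\cdot,\bar s)$. This should be an explicit inequality in $|q_0|$, the height of $p_0$, and the angle between $x$ and $q_0$, playing the role of the condition $\langle\nu,\nu_0\rangle>\tanh\bar s/\tanh r_0$.

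\emph{Step 2: local graphicality.} By Theorem~\ref{thm2pts}, using the specific $x$ and dilation for which $P_x(\bar s)$ is tangent to $\mathcal C(\bar s)$ at $y_0$, a neighbourhood of $p_0$ in $\Sigma_t$ is a graph over $P_x(\bar s)$. Since $P_x(\bar s)$ and $\mathcal C(\bar s)$ are tangent at $y_0$ and the normal geodesics issuing from them near $y_0$ are transverse to $\Sigma_t$, a shrinking argument as in the proof of Theorem~\ref{thm: gradient bound} gives a $C^2$ function $r_t$ on a neighbourhood $U$ of $y_0$ in $\mathcal C(\bar s)$ with $r_t(y_0)=r_0$. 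Because this works at every $p_0$ and normal geodesics from $\mathcal C(\bar s)$ are disjoint, injectivity is global, so $\Sigma_t\cap\mathcal C^+(\bar s)$ is a graph.

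\emph{Step 3: the gradient bound.} For each admissible hemisphere $P$ through $p_0$ with $p_0\in\overline{H_+}$ at the boundary of the parameter region from Step~1, $\Sigma_t$ near $p_0$ lies on one side of the equidistant/hemisphere family. Hence the unit tangent $\tau$ of $\Sigma_t$ at $p_0$ makes an angle with the normal geodesic direction $\dot\beta(r_0)$ (where $\beta(r)=\exp_{y_0}(rN)$) that is bounded away from $0$. To convert this into a bound on $dr_t(e)$, I would use the Jacobi fields along $\beta$. Differentiating $\exp_{y(\epsilon)}(r_t(y(\epsilon))N)$ gives $\tau=dr_t(e)\,\dot\beta+J(r_0)$, where $J$ is the Jacobi field with $J(0)=e$ and $J'(0)=-S e$, and $S$ has the principal curvatures of $\mathcal C(\bar s)$. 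Then $|J(r_0)|$ is explicit, namely $\cosh r_0+\tanh\bar s\sinh r_0$ or $\cosh r_0+\coth\bar s\sinh r_0$ depending on the direction. In Euclidean coordinates these factors are what the ratio $\lambda_{\bar s}|a|/(\lambda_{\bar s+r_0}|q_0|)$ should encode. Combining gives $|\nabla r_t|\le |J(r_0)|\cot\theta_{\min}$, to be simplified into the stated expression; I expect the factor $r_0^{-1}$ and $\sqrt{1+\lambda_{\bar s}}$ to arise from the chosen bound on $\cot\theta_{\min}$ (comparison of chord vs.\ height as in the Euclidean tangency argument of Theorem~\ref{thm:gradup}).

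The main obstacle is Steps~1 and~3 jointly: computing explicitly the extremal hemisphere through $p_0$ and its slope relative to $\dot\beta$ in half-space coordinates, and then matching the resulting expression with the precise constant in the statement. I would first do the computation in the $2$-plane spanned by $\gamma$ and $q_0$, where everything is a circle/line computation, and only afterwards use rotational symmetry about $\gamma$ to handle the remaining directions.
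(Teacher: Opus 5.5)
There is a genuine gap: your proposal stops where the content of the estimate lies. The parameter region of Step~1 and the angle $\theta_{\min}$ of Step~3 are never computed, and the final inequality is only ``expected''. That expectation is also misplaced, because an exact computation of the extremal angle does not produce the factors $r_0^{-1}$ and $\sqrt{1+\lambda_{\bar s}}$. In the paper the stated constant has a Euclidean origin. The slope at $q_0$ of the hemisphere $P_x(\bar s)$ through $p_0$ is bounded by its Euclidean radius (the numerator) over the height $y(q_0)=\lambda_{\bar s+r_0}|q_0|$ of $p_0$. The extra $1/r_0$ is attributed to Gauss's lemma.

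Several intermediate claims also need correcting.
\begin{itemize}
\item For $|x|=h$, $P_x(s)$ is the hemisphere with centre $h\coth s\,x/|x|$ and radius $h/\sinh s$, not a $\cosh s$-type centre.
\item In Step~3 the mechanism is not that $\Sigma_t$ lies on one side of a hemisphere. It is that, by Theorem~\ref{thm2pts}, $T_{p_0}\Sigma_t$ contains no direction of a normal geodesic of any $P_x(\bar s)$ with $p_0\in H_+(x,\bar s)$.
\item In Step~2, global injectivity holds because the outward normal ray of $\mathcal C(\bar s)$ at $y$ is the normal geodesic of the hyperplane $P_x(\bar s)$ tangent at $y$. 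Theorem~\ref{thm2pts} then lets that ray meet $\Sigma_t$ at most once. Local graphicality plus disjointness of the rays does not give this.
\end{itemize}

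Your Jacobi-field scheme does close the gap if the constraint is computed exactly and paired direction by direction. Put $D=\bar s+r_0$ and write a unit $\xi\in T_{p_0}\H^{n+1}$ as $\xi=\alpha\dot\beta+w+\mu e_{\mathrm{ax}}$, with $w$ cross-sectional and $e_{\mathrm{ax}}$ axial.

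In the hyperboloid model, the hyperplane through $p_0$ normal to $\xi$ is at distance $d$ from $\gamma$ with $\cosh^2 d=\alpha^2\cosh^2D+|w|^2$. Slide it along the geodesic through $p_0$ in direction $-\xi$; it eventually meets $\gamma$. So it passes a position at distance exactly $\bar s$ that separates $p_0$ from $\gamma$. Hence every $\xi$ with $\alpha>0$ and $\alpha^2(\sinh^2D-\sinh^2\bar s)>\sinh^2\bar s\,|w|^2+\cosh^2\bar s\,\mu^2$ is forbidden. In the two principal slices this reduces to Clairaut's relation $\cos\psi=\cosh\bar s/\cosh D$ (axial) and the right-triangle relation $\cos\psi=\sinh\bar s/\sinh D$ (cross-sectional).

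Apply this to $\pm\tau$, where $\tau=dr_t(e)\dot\beta+J(r_0)$, using your stretch factors $\cosh D/\cosh\bar s$ (axial) and $\sinh D/\sinh\bar s$ (cross-sectional). This gives $dr_t(e)^2(\sinh^2D-\sinh^2\bar s)\le\sinh^2D\,|e_{\mathrm{cr}}|^2+\cosh^2D\,|e_{\mathrm{ax}}|^2$, hence $|\nabla r_t(y_0)|\le\cosh D/\sqrt{\sinh^2D-\sinh^2\bar s}$. Do not instead multiply the largest stretch by a single worst-case $\cot\theta_{\min}$. That product is of order $1/\bar s$ as $\bar s\to0$ and does not imply the statement.

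To finish, note that $y_0$ and $p_0$ lie on the same Euclidean semicircle centred at the origin, so $|a|\coth\bar s=|q_0|\coth D$. Hence the stated right-hand side equals $\frac{\cosh D}{r_0\cosh\bar s}\sqrt{1+\lambda_{\bar s}}$. Since $\sinh^2D-\sinh^2\bar s=\sinh(2\bar s+r_0)\sinh r_0\ge r_0^2\cosh^2\bar s$, your bound implies the theorem.

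Compared with the paper's Euclidean-slope argument, this intrinsic route explains where the estimate comes from. It also gives a stronger inequality: bounded as $r_0\to\infty$, and of order $r_0^{-1/2}$ as $r_0\to0$. The price is the cone computation you left open.
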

\begin{proof}
 Fix \(t\in[0,T)\) and let \(p_0\in\Sigma_t\) be such that there exist a geodesic $\gamma_{x_0}$ passing through $p_0$ and perpendicular to the vertical geodesic $\gamma$ with $\gamma_{x_0}(-\infty)=x_0$ and $y_0\in\mathcal{C}(\bar{s})$ be such that $p_0=\exp_{y_0}(r_t(y_0)N(y_0))$ and also let $q_0=\pi_{\partial_\infty\H^{n+1}}(p_0)$ be the orthogonal projection of $p_0$ on $\R^n\times\{0\}.$\\

 Next for any $x\in\partial_\infty\H^{n+1}\setminus\{{\bf 0},\infty\}$, we write $P_x(\bar{s})$ as a Euclidean hemisphere centered at $c=(a(1+\lambda_{\bar{s}}^2),0)$ with radius $R=\lambda_{\bar{s}}\sqrt{1+\lambda_{\bar{s}}}\,|a|$, which is tangent to the Euclidean cone $\mathcal{C}(\bar{s})$ at $y=(a,\lambda_{\bar{s}}|a|)$ (recall the notation introduced before the statement).\\
 Then we have \(p_0\in H_+(x,\bar{s})\) whenever $|c-q_0|<\lambda_{\bar{s}}\sqrt{1+\lambda_{\bar{s}}}|a|$.
\begin{figure}[!ht]
    \centering
    \includegraphics[width=1\linewidth]{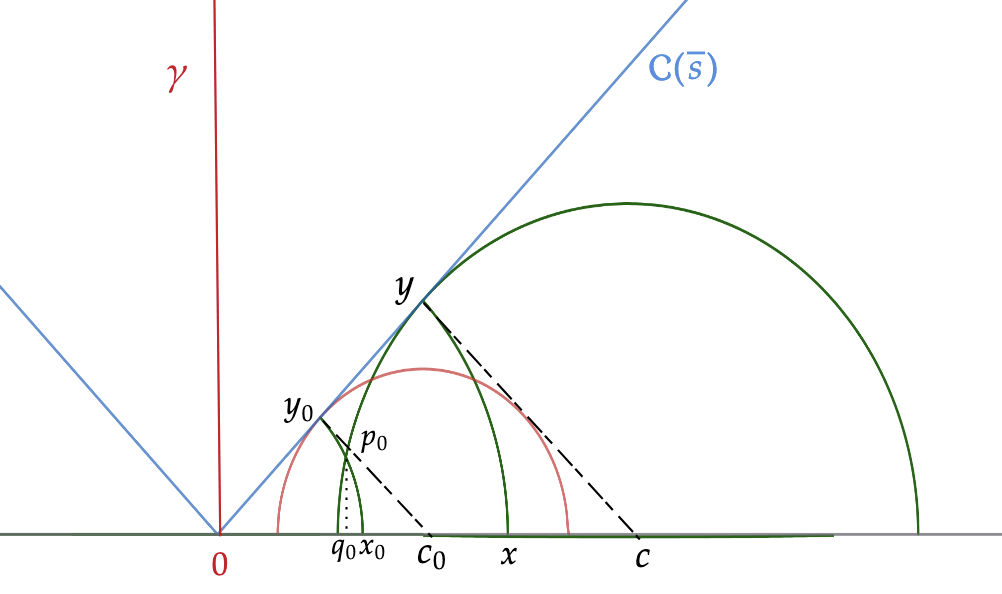}
    \caption{Gradient bound for  $\Sigma_t$}
    \label{fig:grad2pts}
\end{figure}

Since \(\bar{s}\ge s_0(x_0)\), there exists a neighborhood
\(\mathcal{V}\subset\Sigma_t\) of \(p_0\) which is a graph over a neighborhood
\(D\subset P_{x_0}(\bar{s})\). Shrinking \(\mathcal{V}\) if necessary, we may
assume that \(\mathcal{V}\) is a graph over a neighborhood
$U(y_0)\subset\mathcal{C}(\bar{s})$: since the tangent hemisphere $P_x(\bar{s})$ can be slid along $\mathcal{C}(\bar{s})$, varying the contact point while keeping $\mathcal R_{x,\bar{s}}$-admissibility, a neighbourhood of $p_0$ in $\Sigma_t$ can be represented as a graph over a neighbourhood of $y_0$ in $\mathcal{C}(\bar{s})$, namely
\[
\mathcal{V}=\big\{\big(y,\exp_y(r_t(y)N(y))\big):y\in U(y_0)\big\}
\]
for some \(C^2\) function \(r_t:U(y_0)\to(\bar{s},\infty)\) with
\(r_t(y_0)=r_0\).\\
Next we prove the gradient estimate. Let \(x\in\partial_\infty\H^{n+1}\setminus\{{\bf 0},\infty\}\) be such that
$|c-q_0|<\lambda_{\bar{s}}\sqrt{1+\lambda_{\bar{s}}}|a|$.
Then the boundary of the upper
half-space \(P_x(\bar{s})\) can be written as a Euclidean graph over
\(\R^n\times\{0\}\) of the form
\[
y(x)=\sqrt{R^2-|x-c|^2}.
\]
A direct computation gives
\[
\nabla _{\mathbb{R}^{n}}y(x)=\frac{x-c}{\sqrt{R^2-|x-c|^2}}
=\frac{x-c}{y(x)},
\]
and hence
\begin{equation*}
    |\nabla_{\mathbb{R}^{n}}y| (q_0)
=\frac{|q_0-c|}{y(q_0)}\leq \frac{\lambda_{\bar{s}}\sqrt{1+\lambda_{\bar{s}}}|a|}{\lambda_{\bar{s}+r_0}|q_0|}
\end{equation*}
here we used the fact that $p_0\in \mathcal{C}(\bar{s}+r_0)$ so $p_0=(q_0,\lambda_{\bar{s}+r_0}|q_0|)$ and $|p_0-q_0|=\lambda_{\bar{s}+r_0}|q_0|$ and hence $y(q_0)=\lambda_{\bar{s}+r_0}|q_0|.$

Let $e\in T_{y_0}\mathcal{C}(\bar{s})$ and $y:(-\epsilon,\epsilon)\to\mathcal{C}(\bar{s})$ be a curve on $\mathcal{C}(\bar{s})$ with $y(0)=y_0$ and $y'(0)=e$. Then
\[
\frac{d}{ds}\bigg|_{s=0}\exp_{y(s)}\bigl(r_t(y(s))\,N(y(s))\bigr)\;=\;d(\exp_{y_0})_{r_0N(y_0)}(\tau)\;\in\; T_{p_0}\Si_t,
\]
where $\tau=d(r_t)_{y_0}(e)\,N(y_0)+r_0\,dN_{y_0}(e)+e\in T_{y_0}\H^{n+1}$. Using Gauss's lemma applied to the hyperbolic exponential map at $y_0$, and the identity $N(p_0)=d(\exp_{y_0})_{r_0N(y_0)}(r_0N(y_0))$, we have
\begin{align*}
\Big\langle d(\exp_{y_0})_{r_0N(y_0)}(\tau),\;d(\exp_{y_0})_{r_0N(y_0)}(r_0N(y_0))\Big\rangle\;&=\;\langle\tau,r_0N(y_0)\rangle\\
    \;&=\;r_0\,d(r_t)_{y_0}(e).
\end{align*}
Note that, since $dN_{y_0}\colon T_{y_0}\mathcal{C}(\bar{s})\to T_{y_0}\mathcal{C}(\bar{s})$ is (the negative of) the shape operator of $\mathcal{C}(\bar{s})$ at $y_0$ and $e\in T_{y_0}\mathcal{C}(\bar{s})$, the vector $r_0\,dN_{y_0}(e)+e$ lies in $T_{y_0}\mathcal{C}(\bar{s})$ and is therefore orthogonal to $N(y_0)$ in the hyperbolic metric; the identity above is the full inner-product computation in the splitting $T_{y_0}\H^{n+1}=T_{y_0}\mathcal{C}(\bar{s})\oplus\R\,N(y_0)$.

Since a neighbourhood of $p_0$ in $\Sigma_t$ lies inside $H_+(x,\bar{s})$ and $P_x(\bar{s})$ is tangent to $\mathcal{C}(\bar{s})$ at $y_0$, pulling back via the normal exponential map (and using the shape-operator identity above to translate the comparison from the ambient Euclidean gradient into the intrinsic gradient on $\mathcal{C}(\bar{s})$), the graph of $r_t$ over $U(y_0)$ lies below the corresponding graph of $y(\cdot)$ and is tangent to it at $y_0$. We thus obtain
\[
|\nabla r_t(y_0)|\;\leq\;\frac{1}{r_0}\,|\nabla_{\mathbb{R}^{n}}y(q_0)|\;\leq\;\frac{\lambda_{\bar{s}}\sqrt{1+\lambda_{\bar{s}}}\,|a|}{r_0\,\lambda_{\bar{s}+r_0}\,|q_0|}.
\]
\end{proof}

\medskip

\subsection{Inverse curvature flows}\label{sec:icf-2pts}

The next result is the cylindrical counterpart of Theorem~\ref{thm:upbar}: it shows that if the initial hypersurface lies outside a hyperbolic cylinder, then the evolving hypersurfaces remain outside an explicit family of expanding cylinders for all later times. We keep the same structural assumptions on the curvature function $\mathcal F$ as in Section~\ref{sec:icf} (Gerhardt's setting~\cite{Gerhardt2014}), which in particular include IMCF; we also recall that the cylinder $\mathcal C(\rho)$ is isoparametric and hence evolves under the inverse curvature flow as a one-parameter family of cylinders $\{\mathcal C(\rho(t))\}$ with $\dot\rho(t)=1/H_{\mathcal C(\rho(t))}=1/((n-1)\coth\rho(t)+\tanh\rho(t))$ (cf.~\cite{alencar2026inversemeancurvatureflow}). In contrast with horospheres, equidistants and geodesic spheres, this ODE does not admit a closed-form integral, but $\rho(t)\to\infty$ linearly as $t\to\infty$, with $H_{\mathcal C(\rho(t))}\to n$.

\begin{theorem}[Cylinder as a lower barrier]\label{thm:cylbar}
Let $\{\Sigma_t\}_{t\in[0,T_{\max})}$ be a solution to the inverse curvature flow~\eqref{ICF} with $\partial_\infty\Sigma_t=\{\mathbf 0,\infty\}$ for every $t\in[0,T_{\max})$. Suppose that, for some $s_->0$, $\Sigma_0\subset\mathcal C^+(s_-)$, where $\mathcal C^+(s_-)$ is the mean-concave side of $\mathcal C(s_-)$ given by \eqref{cylinder}, that is
\[
d_{\H^{n+1}}(\gamma,p)\ge s_-
\quad\forall\,p\in\Sigma_0\,.
\]
 Then, for every $t\in[0,T_{\max})$,
\(
\Sigma_t\subset\mathcal C^+(s_-(t)),
\)
where $s_-(t)$ is defined by
\begin{equation}\label{eq:cylbar-evolution}
\sinh s_-(t)=e^{t/n}\sinh s_-.
\end{equation}
In particular,
\[
d_{\H^{n+1}}(\gamma,\Sigma_t)\ge s_-(t)>0
\]
for every $t\in[0,T_{\max})$, and $s_-(t)\to+\infty$ whenever $T_{\max}=+\infty$.
\end{theorem}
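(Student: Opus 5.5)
The plan is to imitate the proof of Theorem~\ref{thm:upbar}(i): instead of using the evolving cylinders themselves as barriers, use compact geodesic spheres centred on the axis $\gamma$. The avoidance principle is safe when one of the two flows is compact. Comparing two non-compact flows that share the asymptotic boundary $\{\mathbf 0,\infty\}$ is exactly what we want to avoid. Conveniently, the sphere evolution \eqref{sphere evolution} already has the form $\sinh r(t)=e^{t/n}\sinh r_0$, which is the law \eqref{eq:cylbar-evolution}. This suggests the spheres, not the cylinders, produce $s_-(t)$.

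\textbf{Step 1: initial barriers.} Fix $r_0\in(0,s_-)$ and a centre $c\in\gamma$. Let $\mathcal S^{(c)}_{r_0}$ be the geodesic sphere of radius $r_0$ about $c$. Every point of its closed ball $\overline{B^{(c)}_{r_0}}$ is at distance at most $r_0<s_-$ from $\gamma$. Since $\Sigma_0\subset\mathcal C^+(s_-)$, the ball is disjoint from $\Sigma_0$ and lies in the component of $\H^{n+1}\setminus\Sigma_0$ containing $\gamma$.

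\textbf{Step 2: avoidance.} Under \eqref{ICF}, by \eqref{sphere evolution} the sphere evolves to $\mathcal S^{(c)}_{r(t)}$ with the same centre and $\sinh r(t)=e^{t/n}\sinh r_0$. By the avoidance principle between the compact smooth flow $\{\mathcal S^{(c)}_{r(t)}\}$ and the properly embedded flow $\{\Sigma_t\}$, exactly as in the proof of Theorem~\ref{thm:upbar}(i), we get $\Sigma_t\cap\overline{B^{(c)}_{r(t)}}=\emptyset$ for all $t$.

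\textbf{Step 3: union over centres.} Letting $c$ range over $\gamma$, the union of these balls is the solid tube $\overline{\mathcal C^-(r(t))}=\{d_{\H^{n+1}}(\gamma,\cdot)\le r(t)\}$. Indeed, a point at distance $\rho\le r(t)$ from $\gamma$ lies in the ball centred at its foot point on $\gamma$. Hence $\Sigma_t\cap\overline{\mathcal C^-(r(t))}=\emptyset$, i.e.\ $\Sigma_t\subset\mathcal C^+(r(t))$. Unlike the horosphere case, the complement of the tube is connected. So no component-selection argument via continuity in $t$ is needed, although one still needs $\Sigma_t$ to stay on the outer side, which is automatic here.

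\textbf{Step 4: limit.} Let $r_0\uparrow s_-$. Then $r(t)\uparrow s_-(t)$ with $\sinh s_-(t)=e^{t/n}\sinh s_-$. Taking the union over $r_0<s_-$ yields $\Sigma_t\subset\mathcal C^+(s_-(t))$, hence $d_{\H^{n+1}}(\gamma,\Sigma_t)\ge s_-(t)>0$, and $s_-(t)\to\infty$ if $T_{\max}=\infty$. As a consistency check, the ODE comparison $\dot\rho=1/((n-1)\coth\rho+\tanh\rho)>\tanh\rho/n$ shows that the true evolving cylinders move faster than $s_-(t)$. The spherical bound is therefore weaker, but it is obtained without non-compact avoidance.

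\textbf{Main obstacle.} I expect the hardest step to be justifying the avoidance principle in Step 2. This requires three things. First, a sign check: both the spheres and $\Sigma_t$ move in the expanding direction, and the ball lies on the side of $\Sigma_t$ consistent with the comparison. Second, the compact-versus-proper avoidance statement must hold for a merely properly embedded (possibly viscosity) solution; here I would localise, as in Theorem~\ref{thm:upbar}(ii), to a large compact region around $\overline{B^{(c)}_{r(t)}}$ on a bounded time interval. Third, the first touching must occur at an interior point: this follows because $\Sigma_t\cap\overline{B^{(c)}_{r(t)}}$ is contained in a compact set for $t$ in compact intervals.
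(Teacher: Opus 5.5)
Your argument is the paper's argument: geodesic spheres centred on $\gamma$ as compact barriers, avoidance against $\Sigma_t$, and the identity $\bigcup_{c\in\gamma}B_r(c)=\mathcal C^-(r)$. Taking $r_0<s_-$ and letting $r_0\uparrow s_-$ is a harmless, slightly more careful variant of the paper's direct use of radius $s_-$. However, the first item of your ``main obstacle'' is a genuine gap, not a routine check. The paper's proof has the same gap, since it too only verifies that the sphere is disjoint from $\Sigma_0$.

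For an expanding flow $\partial_t\phi=-\eta_t/\mathcal F$, avoidance is a comparison of \emph{nested} regions: if $\overline B\subset\Omega_0$ (the side into which $\eta$ points), then $\overline{B_t}\subset\Omega_t$. For non-nested pairs it is false; for example, two disjoint geodesic spheres both expand and eventually meet. Your Step~1 only places $\overline{B^{(c)}_{r_0}}$ in the component of $\mathbb{H}^{n+1}\setminus\Sigma_0$ containing $\gamma$. Nothing in the hypotheses makes this component $\Omega_0$, which is defined as the side with asymptotic boundary $\{\mathbf 0,\infty\}$.

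Concretely, let $\beta=\{(\tau e_1,\varepsilon\tau):\tau>0\}\subset\mathcal C(\rho)$ with $\sinh\rho=1/\varepsilon$, and let $\Sigma_0=\{p:d_{\mathbb{H}^{n+1}}(p,\beta)=\delta\}$ with $\delta>0$ small. Then $\Sigma_0$ is properly embedded, $\partial_\infty\Sigma_0=\{\mathbf 0,\infty\}$ and $d_{\mathbb{H}^{n+1}}(\gamma,\Sigma_0)=\rho-\delta$. But $\Omega_0$ is the thin tube around $\beta$, and $\gamma$ lies in $E_0$. Under IMCF, say, the points of $\Sigma_0$ nearest to $\gamma$ move towards $\gamma$ with speed $1/H>0$. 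Hence $d_{\mathbb{H}^{n+1}}(\gamma,\Sigma_t)<\rho-\delta$ for small $t>0$, and the conclusion fails for $s_-=\rho-\delta$ (and, after a short time, for $s_-$ slightly smaller).

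So you must add the hypothesis $\gamma\subset\Omega_0$, equivalently $\mathcal C^-(s_-)\subset\Omega_0$. This is what the paper tacitly assumes when it glosses $\Sigma_0\subset\mathcal C^+(s)$ as ``$\Sigma_0$ encloses $\mathcal C(s)$''. With it, your balls are nested inside $\Omega_0$, the comparison principle gives $\overline{B^{(c)}_{r(t)}}\subset\Omega_t$, and Steps~2--4 go through as written. Note that they yield $d_{\mathbb{H}^{n+1}}(\gamma,\cdot)\ge s_-(t)$ on $\Sigma_t$, i.e.\ the closure of $\mathcal C^+(s_-(t))$, which is the form in which the statement is actually used.
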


\begin{proof}
For each $x\in\gamma$, let $\mathcal S_{s_-}(x)$ denote the geodesic sphere of hyperbolic radius $s_-$ centered at $x$.

Since $\Sigma_0\subset\mathcal C^+(s_-)$, the sphere $\mathcal S_{s_-}(x)$ is disjoint from $\Sigma_0$. Under~\eqref{ICF}, the sphere $\mathcal S_{s_-}(x)$ evolves to spheres $\{\mathcal S_{s_-(t)}(x)\}_{t\ge 0}$ with the same center $x$ and radii $s_-(t)$ satisfying $\sinh s_-(t)=e^{t/n}\sinh s_-$ (cf.~\eqref{sphere evolution}). The avoidance principle for~\eqref{ICF} between the compact smooth flow $\{\mathcal S_{s_-(t)}(x)\}$ and the properly embedded flow $\{\Sigma_t\}$ yields
\[
B_{s_-(t)}(x)\cap\Sigma_t=\emptyset
\qquad
\forall\,x\in\gamma,
\quad
\forall\,t\in[0,T_{\max}).
\]
Taking the union over all $x\in\gamma$ gives
\[
\Sigma_t\cap
\left(\bigcup_{x\in\gamma} B_{s_-(t)}(x)\right)
=\emptyset.
\]
Since $
\bigcup_{x\in\gamma} B_{s_-(t)}(x)
=
\mathcal C^-(s_-(t)),
$
we conclude that
\(
\Sigma_t\subset\mathcal C^+(s_-(t))
\). Therefore
\[
d_{\H^{n+1}}(\gamma,\Sigma_t)\ge s_-(t)
=
\operatorname{arcsinh}\!\bigl(e^{t/n}\sinh s_-\bigr).
\]
This proves the theorem.
\end{proof}

\medskip

Combining Theorem~\ref{thm:cylbar} with Theorem~\ref{thm:grad2pts} yields the cylindrical analogue of the graphicality conclusion drawn from Theorem~\ref{thm:upbar} and Theorem~\ref{thm:gradup} in Section~4:

\begin{corollary}\label{cor:graph-after-T}
Under the hypotheses of Theorem~\ref{thm2pts} and assuming additionally that $\Sigma_0\subset\mathcal C^+(s_-)$ for some $s_->0$, define
\[
T\;:=\;n\,\log\!\Bigl(\frac{\sinh\bar s}{\sinh s_-}\Bigr)\,.
\]
For every $t\ge T$, $\Sigma_t$ is a global $C^2$-graph over the cylinder $\mathcal C(\bar s)$, with the uniform gradient bound of Theorem~\ref{thm:grad2pts}.
\end{corollary}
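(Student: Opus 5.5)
The argument mirrors the horospherical one that follows Theorem~\ref{thm:upbar}: a containment statement coming from a barrier is fed into a local graphicality statement. The first step is to evaluate the barrier of Theorem~\ref{thm:cylbar} at time $T$. Since $\Sigma_0\subset\mathcal C^+(s_-)$, that theorem gives $\Sigma_t\subset\mathcal C^+(s_-(t))$ with $\sinh s_-(t)=e^{t/n}\sinh s_-$. For $t\ge T=n\log(\sinh\bar s/\sinh s_-)$ this yields $\sinh s_-(t)\ge\sinh\bar s$, hence $s_-(t)\ge\bar s$. If $T\le 0$, i.e.\ $s_-\ge\bar s$, then $\Sigma_0$ is already outside $\mathcal C(\bar s)$ and the same conclusion holds for all $t\ge0$. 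In either case $\Sigma_t\subset\mathcal C^+(\bar s)$, so the whole hypersurface lies on the mean-concave side of $\mathcal C(\bar s)$, and $\Sigma_t\cap\mathcal C^+(\bar s)=\Sigma_t$.

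The second step is local graphicality. By Theorem~\ref{thm:grad2pts}, admissibility of $\bar s$ is preserved along the flow through Lemma~\ref{lem:transfer-2pt} and Theorem~\ref{thm: admissible for all t}. It follows that every point $p_0\in\Sigma_t\cap\mathcal C^+(\bar s)$ has a neighbourhood in $\Sigma_t$ that is a $C^2$ normal-exponential graph over a neighbourhood of its foot point $y_0\in\mathcal C(\bar s)$, and the gradient bound stated there holds at $y_0$. Hence all of $\Sigma_t$ is locally a graph over $\mathcal C(\bar s)$, with the uniform gradient estimate.

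The third step upgrades local graphs to a global graph, and I expect it to be the main obstacle. Two facts are needed. First, injectivity: the nearest-point projection $\pi_{\mathcal C(\bar s)}$ must be injective on $\Sigma_t$, so that each outward normal geodesic ray from $\mathcal C(\bar s)$ meets $\Sigma_t$ at most once. I would argue this exactly as in item~(a) following Definition~\ref{admis tp}. For $y_*\in\mathcal C(\bar s)$, choose $x$ so that $P_x(\bar s)$ is tangent to $\mathcal C(\bar s)$ at $y_*$. The outward normal ray from $y_*$ is then the geodesic $\gamma_x$ entering $H_+(x,\bar s)$. By Proposition~\ref{prop: monotonicity}, in the version transferred by Lemma~\ref{lem:transfer-2pt}, the level-set function is monotone along that ray, so $\Sigma_t$ meets the ray at most once. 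One point to verify is that the normal ray to the cylinder coincides with $\gamma_x$; this holds by the orthogonality built into the foliation of Section~\ref{AF-2pts}.

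Second, surjectivity: the projection must be onto $\mathcal C(\bar s)$. I would use that $\Sigma_t$ is properly embedded with $\partial_\infty\Sigma_t=\{\mathbf 0,\infty\}$ and encloses $\mathcal C(\bar s)$, since $\Omega_t\supset\mathcal C^-(s_-(t))\supset\mathcal C^-(\bar s)$. Then every outward ray from $\mathcal C(\bar s)$ starts in $\Omega_t$ and must leave it: by properness the ray ends at an ideal point different from $\mathbf 0$ and $\infty$, and $\Omega_t$ cannot accumulate there. So each ray crosses $\Sigma_t$ at least once. An alternative route to surjectivity is that the image of the projection is open, because of the local graphs, and closed, by properness; connectedness of $\mathcal C(\bar s)$ then gives surjectivity.

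Combining the three steps, $\Sigma_t=\{\exp_y(r_t(y)N(y)):y\in\mathcal C(\bar s)\}$ with $r_t\in C^2$, and the bound of Theorem~\ref{thm:grad2pts} holds at every point, which is the statement of Corollary~\ref{cor:graph-after-T}.
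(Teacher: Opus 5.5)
Your proposal is correct and follows the paper's proof, which consists precisely of your first two steps: the barrier of Theorem~\ref{thm:cylbar} gives $s_-(t)\ge\bar s$ for $t\ge T$, hence $\Sigma_t\subset\mathcal C^+(\bar s)$ and $\Sigma_t\cap\mathcal C^+(\bar s)=\Sigma_t$, after which Theorem~\ref{thm:grad2pts} is invoked. Your third step (injectivity and surjectivity of the projection onto $\mathcal C(\bar s)$) makes explicit what the paper leaves implicit, with two small points to tighten: monotonicity along $\gamma_x$ follows directly from admissibility, because the bisector of $[\gamma_x(r),\gamma_x(r')]$ is $P_x(\tfrac{r+r'}{2})$ with $\tfrac{r+r'}{2}>\bar s$ (Lemma~\ref{lem:transfer-2pt} does not actually list Proposition~\ref{prop: monotonicity}); and a non-decreasing $u_t$ only makes $\Sigma_t\cap\gamma_x$ an interval, which the local graphs from your second step then force to be a single point.
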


\begin{proof}
By Theorem~\ref{thm:cylbar}, $s_-(t)=\operatorname{arcsinh}(e^{t/n}\sinh s_-)\ge\bar s$ exactly when $t\ge T$. For such $t$, $\Sigma_t\subset\mathcal C^+(s_-(t))\subset\mathcal C^+(\bar s)$, so $\Sigma_t\cap\mathcal C^+(\bar s)=\Sigma_t$; the graphicality and gradient bound then follow from Theorem~\ref{thm:grad2pts}.
\end{proof}

\medskip

Specialising to the inverse mean curvature flow, the cylinder lower barrier and the graphicality corollary combine to give a partial counterpart of Theorem~\ref{thm:NonCompact}. We emphasise that the full analogue does \emph{not} transfer to the two-point setting:~\cite[Theorem~1]{Allen} is stated for hypersurfaces represented as graphs over the (totally umbilic) horosphere $\R^n\times\{0\}$ in the upper half-space model, and its proof relies on an ODE maximum principle at infinity whose forcing term is autonomous in the angle function $v y^{-1}$, a feature tied to the umbilicity of horospheres. In the two-point case the natural model surface is the cylinder $\mathcal C(\bar s)$, which has two distinct principal curvatures $\coth\bar s$ and $\tanh\bar s$; the analogous ODE forcing term ceases to be autonomous, and the cylinder itself is not stationary under IMCF (its hyperbolic distance from $\gamma$ grows unboundedly with~$t$). We therefore confine ourselves to the geometric control given by Theorems~\ref{thm:cylbar} and~\ref{thm:grad2pts}, and record long-time existence as an open problem.

\begin{theorem}[Non-compact IMCF with two points at infinity]\label{thm:NonCompact2pts}
Let $\Sigma_0\subset\H^{n+1}$ be a smooth, properly embedded hypersurface with $\partial_\infty\Sigma_0=\{\mathbf 0,\infty\}$ and $\Sigma_0\subset\mathcal C^+(s_-)\cap\mathcal C^-(s_+)$ for some $0<s_-<s_+<\infty$. Let $T_{\max}\in(0,+\infty]$ denote the maximal time of classical existence of the inverse mean curvature flow starting at $\Sigma_0$. Then:
\begin{itemize}
   \item[(a)] (\emph{cylinder lower barrier}) By Theorem~\ref{thm:cylbar}, $\Sigma_t\subset\mathcal C^+(s_-(t))$ for every $t\in[0,T_{\max})$, with $\sinh s_-(t)=e^{t/n}\sinh s_-$; in particular, $d_{\H^{n+1}}(\gamma,\Sigma_t)\ge s_-(t)>0$ uniformly on $[0,T_{\max})$, and $s_-(t)\to+\infty$ whenever $T_{\max}=+\infty$.
   \item[(b)] (\emph{graphicality and gradient bound}) For every $$t\ge T:=n\log(\sinh\bar s/\sinh s_-),$$ Corollary~\ref{cor:graph-after-T} represents $\Sigma_t$ as a global $C^2$-graph over the cylinder $\mathcal C(\bar s)$, with the uniform gradient bound of Theorem~\ref{thm:grad2pts}.
\end{itemize}
\end{theorem}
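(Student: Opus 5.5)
The statement has two parts, and each is essentially a specialisation of results already proved. I would treat (a) and (b) separately and verify that the hypotheses of those results are met for the IMCF, which is \eqref{ICF} with $\mathcal F=H$.

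\emph{Part (a).} The speed $\mathcal F=H$ satisfies the structural assumptions of Section~\ref{sec:icf}:
\begin{itemize}
\item it is homogeneous of degree one, monotone and concave;
\item it is positive on the cone $\Gamma=\{H>0\}$;
\item it is normalised by $H(1,\dots,1)=n$.
\end{itemize}
The assumption $\Sigma_0\subset\mathcal C^+(s_-)$ is exactly the hypothesis of Theorem~\ref{thm:cylbar}, so that theorem gives $\Sigma_t\subset\mathcal C^+(s_-(t))$ with $\sinh s_-(t)=e^{t/n}\sinh s_-$. The bound $d_{\H^{n+1}}(\gamma,\Sigma_t)\ge s_-(t)$ follows at once. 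Since $s_-(t)$ is increasing in $t$, we have $s_-(t)\ge s_->0$ uniformly. Moreover, $s_-(t)=\operatorname{arcsinh}(e^{t/n}\sinh s_-)\to\infty$ when $T_{\max}=\infty$.

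The one point to check is the avoidance step in Theorem~\ref{thm:cylbar}. The barrier spheres $\mathcal S_{s_-}(x)$, $x\in\gamma$, are compact, and $\Sigma_0$ lies at distance at least $s_-$ from $\gamma$. Hence each closed ball $\overline{B_{s_-}(x)}$ meets $\Sigma_0$ at most tangentially. To get strict disjointness I would shrink the radius to $s_--\epsilon$ and then let $\epsilon\to0$, exactly as is done in the horospherical argument.

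\emph{Part (b).} First I note that $\Sigma_0\subset\mathcal C^-(s_+)$ places us in case (b) of the discussion following Definition~\ref{admis tp}. Therefore $\bar s\le s_+<\infty$, and $T=n\log(\sinh\bar s/\sinh s_-)$ is finite. (If $\bar s\le s_-$, then $T\le0$ and the claim holds for all $t\ge0$.) The hypotheses of Theorem~\ref{thm2pts}, and hence of Theorem~\ref{thm:grad2pts}, are then satisfied. Corollary~\ref{cor:graph-after-T} applies directly: for $t\ge T$ we have $s_-(t)\ge\bar s$, so $\Sigma_t\subset\mathcal C^+(\bar s)$, and Theorem~\ref{thm:grad2pts} makes all of $\Sigma_t$ a graph over $\mathcal C(\bar s)$ with the stated gradient bound.

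The main obstacle is upgrading this to a \emph{global} graph, that is, showing the projection onto $\mathcal C(\bar s)$ is surjective rather than only a local graph over an open set $U$. For this I would follow the argument in item (a) after Definition~\ref{admis tp}.
\begin{itemize}
\item \emph{Injectivity.} The nearest-point projection $\pi_{\mathcal C(\bar s)}$ is injective on $\Sigma_t$, by admissibility of every $s\ge\bar s$ (this is preserved along the flow by Lemma~\ref{lem:transfer-2pt}).
\item \emph{Openness of the image.} The image is open, by the local graph property.
\item \emph{Closedness of the image.} The image is closed, because $\Sigma_t$ is properly embedded and the height $r_t$ is locally bounded on compact subsets of $\mathcal C(\bar s)$. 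This local boundedness comes from the gradient bound together with $\partial_\infty\Sigma_t=\{\mathbf 0,\infty\}$.
\item \emph{Conclusion.} Since $\mathcal C(\bar s)$ is connected, the image is all of $\mathcal C(\bar s)$.
\end{itemize}
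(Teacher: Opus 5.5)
Your proposal is correct and follows the paper: (a) is Theorem~\ref{thm:cylbar} specialised to $\mathcal F=H$, and (b) is Corollary~\ref{cor:graph-after-T}, with $\bar s\le s_+$ coming from case (b) after Definition~\ref{admis tp}; the paper gives no further proof than this. Your open--closed argument for surjectivity of the projection onto $\mathcal C(\bar s)$ usefully fills in a step that the paper's proof of Corollary~\ref{cor:graph-after-T} leaves implicit, with one correction: the local bound on $r_t$ comes from $\partial_\infty\Sigma_t=\{\mathbf 0,\infty\}$, not from the gradient bound (which grows like $e^{r_0}/r_0$ and so cannot prevent blow-up), because a diverging height along $y_k\to y_\infty$ would force the ideal endpoint of the normal ray at $y_\infty$, a point outside $\{\mathbf 0,\infty\}$, into $\partial_\infty\Sigma_t$.
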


\begin{remark}\label{rem:open-2pt}
Long-time existence of the flow ($T_{\max}=+\infty$) and the existence of a non-trivial asymptotic profile remain open in the two-point setting. A natural strategy would be to combine the uniform gradient bound of Theorem~\ref{thm:grad2pts} with higher-regularity estimates in the spirit of Allen~\cite{Allen} or Scheuer~\cite{scheuer2015gradient,scheuer2015nonscale} for warped-product ambient spaces, but the cylindrical degeneration of the level-set PDE at the axis $\gamma$ prevents a direct adaptation. 
\end{remark}

\subsection*{Acknowledgements}
We would like to thank Brian Harvie and Julian Scheuer for useful conversations regarding the literature and possible extensions of this work.

Theodora Bourni and Aakash Mishra were supported by the grant NSF 2405007.

Jos\'e M. Espinar is partially supported by the Spanish MIC Grant PID2024-160586NB-I00, and the ``Maria de Maeztu'' Excellence Unit IMAG, reference CEX2020-001105-M, funded by\\
MCINN/AEI/10.13039/501100011033/CEX2020-001105-M.

    \bibliography{sources}
    \bibliographystyle{plain}

\end{document}